\newenvironment{proof}[1][Proof]{\noindent\textbf{#1.} }{\ \rule{0.5em}{0.5em}}
\newtheorem{theorem}{Theorem}[section]
\newtheorem{corollary}{Corollary}[section]
\newtheorem{lemma}{Lemma}[section]
\newtheorem{proposition}{Proposition}[section]
\newtheorem{definition}{Definition}[section]
\newtheorem{remark}{Remark}[section]
\begin{document}

\title{Reiterated periodic homogenization of integral functionals with convex and nonstandard growth integrands.}

\author{{\sc Joel Fotso Tachago}\\
	University of Bamenda \\ Faculty of Science, Higher Teachers Training College  Mathematics department,\\ P.O. Box 39, Bambili, Cameroon \\ fotsotachago@yahoo.fr,
	\\
	{\sc Hubert Nnang} \\ University of Yaounde I, \\
	\'{E}cole Normale Sup\'{e}rieure de Yaound\'{e}, P.O. Box 47 Yaound\'e, Cameroon, \\hnnang@uy1.uninet.cm. hnnang@yahoo.fr
	and\\
	{\sc Elvira Zappale} \\
	Dipartimento di Ingegneria Industriale 
	\\
	Universit\`a degli Studi di Salerno\\
	Via Giovanni Paolo II, 132\\
	84084 Fisciano (SA) (Italy),\\
	ezappale@unisa.it}
\date{February, 2020}
\maketitle

\begin{abstract}
Multiscale periodic homogenization is extended to an Orlicz-Sobolev setting. It
is shown by the reiteraded periodic two-scale convergence method that the
sequence of minimizers of a class of highly oscillatory minimizations
problems involving convex functionals, converges to the  minimizers of a
homogenized problem with a suitable convex function.
\end{abstract}
\bigskip\noindent{{\it Keywords: Convex function, reiterated two-scale convergence, relaxation, Orlicz Sobolev spaces.}
	\\
{\bf 35B27, 35B40, 35J25, 46J10, 49J45}} 

\section{Introduction\label{sec1}}

\bigskip The method of two-scale convergence introduced by Nguetseng \cite
{ngu0} and later developed by Allaire \cite{All1} have been widely adopted in homogenization of PDEs in classical Sobolev spaces
neglecting materials where microstructure cannot be conveniently captured by modeling exclusively by means of thoses spaces. Recently in \cite{fotso nnang 2012} some of the above methods were extended to Orlicz-Sobolev setting. On the other hand, an increasing
number of works in homogenization and dimension reduction (see \cite{Kalousek, kenne Nnang, Elvira2, Elvira3, LN1, LN2, LN3,  Nnang Orlicz 2014}, among the others)
are devoted to deal with this more general setting.   We also refer to \cite{ZP1, ZP2, ZP3} for two scale homogenization in variable exponent spaces, which also evidence Lavrentieff phenomena. 


In order to model multiscale phenomena, i.e., to provide homogenization results closer to reality, more than two-scales should be
considered. Indeed the aim of this work is to show that the two-scale
convergence method can be extended and generalized to tackle reiterated homogenization problems in the Orlicz-Sobolev setting.

In details, we intend to study the asymptotic behaviour as $\varepsilon \to 0^+$ of the sequence of
solutions of the problem 
\begin{equation}
\min \left\{ F_{\varepsilon }\left( v\right) :v\in W_{0}^{1}L^{B}\left(
\Omega \right) \right\}  \label{PR1}
\end{equation}%
where, for each $\varepsilon >0,$ the functional $F_{\varepsilon }$ is
defined on $W_{0}^{1}L^{B}\left( \Omega \right) $ by 
\begin{equation}\label{Fepsilon}
F_{\varepsilon }\left( v\right) =\int_{\Omega }f\left( \frac{x}{\varepsilon }
,\frac{x}{\varepsilon ^{2}},Dv\left( x\right) \right) dx, \; v\in
W_{0}^{1}L^{B}(\Omega),
\end{equation}
$\Omega $ being a bounded open set in $\mathbb{R}_x^N, n, N\in \mathbb N$, $D$ denoting the
gradient operator in $\Omega $ with respect to $x$ and the function
$f: \mathbb{R}
_{y}^{N}\times 
\mathbb{R}
_{z}^{N}\times 
\mathbb{R}
^{nN}\rightarrow \lbrack 0,+\infty )$ being an integrand, that satisfies the
following hypotheses:

$\left( H_{1}\right) $ for all $\lambda \in 
\mathbb{R}
^{N},$ $f\left( \cdot,z,\lambda \right) $ is measurable for all $z\in 
\mathbb{R}
^{N}$ and $f\left( y,\cdot ,\lambda \right) $ is continuous for almost all $y\in 
\mathbb{R}
^{N}$;

$\left( H_{2}\right) $ $f\left( y,z,\cdot\right) $ is strictly convex for a.e. $ y\in 
\mathbb R_y^N$ and all $z\in 
\mathbb R _z^{N}$;

$\left( H_{3}\right) $ for each $\left( k,k^{\prime }\right) \in 
\mathbb{Z}
^{2N}$ we have $f\left( y+k,z+k^{\prime },\lambda \right) =$ $f\left(
y,z,\lambda \right) $ for all $\left( z,\lambda \right) \in 
\mathbb{R}
_{z}^{N}\times 
\mathbb{R}
^{N}$ and a.e. $y\in 
\mathbb R
_{y}^{N}$;

$\left( H_{4}\right) $ there exist two constants $c_{1},c_{2}>0$ such that:
\begin{equation*}
c_{1}B\left( \left\vert \lambda \right\vert \right) \leq f\left( y,z,\lambda
\right) \leq c_{2}\left( 1+B\left( \left\vert \lambda \right\vert \right)
\right) 
\end{equation*}%
for all $\lambda \in \mathbb R
^{nN}$and for a.e. $y\in 
\mathbb R_y^N$ and all $z\in 
\mathbb R_z^N.$

We observe that problems of the type \eqref{PR1} have been studied by many
authors in many contexts (see, among the others, \cite{All1, All2, Amar, BabadjianBaia, BaiaFonseca, Barchiesi, CRZ, CDDA, CDG, FerreiraFonseca1, FerreiraFonseca2, Elvira 1, FTN2014, Neukamm, Visintin}.
But in all the above papers the
two-scale approach or other methods (see in particular unfolding) have been always considered in classical Sobolev setting. The novelty here is the multiscale approach beyond classical Sobolev
spaces. For the sake of exposition we consider the scales $\varepsilon$ and $\varepsilon^2$, but more general choices are possible, as in \cite{All2}.

In particular we introduce the following setting. 

Let $B$ an ${\rm N}-$function and $%
\widetilde{B}$ its conjugate both verifying the $\triangle_{2}$ condition, let $
\Omega $ be a bounded open set in $\mathbb R_{x}^{N},$ $Y=Z=\left(-\frac{1}{2},\frac{1}{2}\right) ^{N},  N\in \mathbb{N}$ and $\varepsilon$ any sequence of positive numbers converging to $0$. Assume that $\left( u_{\varepsilon }\right)_\varepsilon$ is bounded in $W^{1}L^{B}\left( \Omega \right).$ Then, there exist not relabelled subsequences  $\varepsilon, (u_\varepsilon)_\varepsilon, u_{0}\in W^{1}L^{B}\left( \Omega \right),$ 
\begin{equation*}
\left( u_{1},u_{2}\right) \in L^{1}\left( \Omega ;W_{\#}^{1}L^{B}\left(
Y\right) \right) \times L^{1}\left( \Omega ;L^1_{per}\left(
Y;W_{\#}^{1}L^{B}\left( Z\right) \right) \right)
\end{equation*}
such that: $u_{\varepsilon }\rightharpoonup u_{0}$ in $W^{1}L^{B}\left( \Omega
\right) $ weakly, and 
\begin{equation*}
\int_{\Omega }D_{x_{i}}u_{\varepsilon }\varphi \left( x,\frac{x}{\varepsilon 
},\frac{x}{\varepsilon ^{2}}\right) dx\rightarrow \iiint_{\Omega
\times Y\times Z}\left( D_{x_{i}}u_{0}+D_{y_{i}}u_{1}+D_{z_{i}}u_{2}\right)
\varphi \left( x,y,z\right) dxdydz
\end{equation*}%
$1\leq i\leq N$, and for all $%
\varphi \in L^{\widetilde{B}}\left( \Omega ;\mathcal{C}_{per}\left( Y\times
Z\right) \right),$ where $D_{x_i}, D_{y_i}$ and $D_{z_i}$ denote the distributional derivatives with respect to the variables $x_i.y_i,z_i$, also denoted by $\frac{\partial}{\partial_{x_i}}$, $\frac{\partial}{\partial_{y_i}}$ and $\frac{\partial}{\partial_{z_i}},$ respectively  (see Section \ref{notations} for detailed notations and Definition \ref{def3s} and Proposition \ref{mainprop3s} for rigorous results).

Next, we define, following the same type of notation adopted in \cite{fotso nnang 2012}, the space
\begin{equation}
\label{F01}
\mathbb{F}_{0}^{1}L^{B}=W_{0}^{1}L^{B}(\Omega) \times L_{D_{y}}^{B}\left( \Omega ;W_{\#}^{1}L^{B}(Y) \right) \times L_{D_{z}}^{B}\left( \Omega ;L_{per}^1\left(
Y;W_{\#}^{1}L^{B}(Z) \right) \right),
\end{equation} where%
\begin{align}
L_{D_{y}}^{B}\left( \Omega ;W_{\#}^{1}L^{B}(Y) \right) =\left\{ u\in L^{1}\left( \Omega
;W_{\#}^{1}L^{B}(Y) \right) :D_{y}u\in L^{B}_{per}\left( \Omega \times Y\right)
^{N}\right\},\nonumber \\
L_{D_{z}}^{B}\left( \Omega ;L_{per}^1\left( Y;W_{\#}^{1}L^{B}(Z) \right) \right) = \label{LF10}\\ 
\left\{ u\in L^{1}\left( \Omega ;L_{per}^1\left( Y;W_{\#}^{1}L^{B}(
Z) \right) \right) :D_{z}u\in L_{per}^{B}\left( \Omega \times Y\times
Z\right) ^{N}\right\}.\nonumber 
\end{align}
 Observe that $D_x, D_y$ and $D_z$ denote the vector of distributional derivatives with respect to $x\equiv(x_1,\dots, x_N)$, $y\equiv(y_!,\dots,y_N)$ and $z\equiv(z_1,\dots,z_N)$ respectively. 

We equip $\mathbb{F}_{0}^{1}L^{B}$ with the norm $\left\Vert u\right\Vert _{%
	\mathbb{F}_{0}^{1}L^{B}}=\left\Vert Du_{0}\right\Vert _{B,\Omega
}+\left\Vert D_{y}u_{1}\right\Vert _{B,\Omega \times Y}+\left\Vert
D_{z}u_{2}\right\Vert _{B,\Omega \times Y\times Z}$, $u=\left(
u_{0},u_{1},u_{2}\right) \in \mathbb{F}_{0}^{1}L^{B}$ which makes it a
Banach space. 

 Finally for $v=\left(
v_{0},v_{1},v_{2}\right) \in \mathbb{F}_{0}^{1}L^{B}$, denote by $\mathbb{D}%
v$, the sum $Dv_{0}+D_{y}v_{1}+D_{z}v_{2}$ and define the functional $F:\mathbb{F}%
_{0}^{1}L^{B} \to \mathbb R^+$ by 
\begin{equation}
\label{F}
F\left( v\right) =\iiint_{\Omega \times Y\times
	Z}f\left( \cdot,\mathbb{D}v\right) dxdydz.
\end{equation}
With the tool of multiscale convergence at hand in the Orlicz-Sobolev setting, we prove 

\begin{theorem}\label{main} Let $\Omega $ be a bounded open set in $\mathbb R^N_x$ and let $f: \mathbb{R}
_{y}^{N}\times 
\mathbb{R}
_{z}^{N}\times 
\mathbb{R}
^{N}\rightarrow \lbrack 0,+\infty )$ be an integrand satisfying $(H_1)-(H_4)$. 
	For each $\varepsilon >0,$ let $u_{\varepsilon }$ be the unique solution of
	\eqref{PR1}, then as $\varepsilon \rightarrow 0,$
	\begin{itemize}
		\item [(a)]$u_{\varepsilon }\rightharpoonup u_{0}$ weakly in $W_{0}^{1}L^{B}(\Omega)$;
		\item[(b)]
 $Du_{\varepsilon }\rightharpoonup \mathbb{D
}u=Du_{0}+D_{y}u_{1}+D_{z}u_{2}$  weakly reiteratively two-scale in $L^{B}\left( \Omega \right)^{N}-$, where $u=\left( u_{0},u_{1},u_{2}\right) \in \mathbb{F}%
_{0}^{1}L^{B}$ is the unique solution of the minimization problem \begin{equation}
F\left( u\right) =\underset{v\in \mathbb{F}_{0}^{1}L^{B}}{\min }F\left(
v\right),  \label{MP1}
\end{equation}%
	\end{itemize}
	where $\mathbb F_0^1L^B$ and $F$ are as in \eqref{F01} and \eqref{F}, respectively.
\end{theorem}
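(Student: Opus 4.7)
The plan is to apply the direct method of the calculus of variations combined with a $\Gamma$-convergence-style argument based on the reiterated two-scale convergence tools outlined in the introduction. First I would check that, for each fixed $\varepsilon>0$, the functional $F_\varepsilon$ admits a unique minimizer: coercivity and $L^B$-weak lower semicontinuity follow from the growth hypothesis $(H_4)$ together with the $\triangle_2$ condition on $B$ and $\widetilde B$ (which makes $W_0^1L^B(\Omega)$ reflexive and separable), while uniqueness follows from the strict convexity assumed in $(H_2)$. Testing the minimality against $v\equiv 0$ and using $B(0)=0$ gives
\[c_1\int_\Omega B(|Du_\varepsilon|)\,dx\le F_\varepsilon(u_\varepsilon)\le F_\varepsilon(0)\le c_2|\Omega|,\]
so $(u_\varepsilon)_\varepsilon$ is bounded in $W_0^1L^B(\Omega)$. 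The multiscale compactness statement preceding the theorem then produces, along a not relabelled subsequence, a triple $u=(u_0,u_1,u_2)\in\mathbb{F}_0^1L^B$ with $u_\varepsilon\rightharpoonup u_0$ weakly in $W_0^1L^B(\Omega)$ and $Du_\varepsilon$ reiteratively two-scale converging to $\mathbb{D}u=Du_0+D_yu_1+D_zu_2$.

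The next step is to show that this $u$ solves \eqref{MP1}. For the lower bound I would establish
\[F(u)\le\liminf_{\varepsilon\to 0}F_\varepsilon(u_\varepsilon)\]
by combining the convexity of $f(y,z,\cdot)$ from $(H_2)$ with the reiterated two-scale convergence of $Du_\varepsilon$: through the tangent-plane inequality for $f$ at smooth admissible test fields of the form $(x,y,z)\mapsto Dv_0(x)+D_yv_1(x,y)+D_zv_2(x,y,z)$ approximating $\mathbb{D}u$, and using $(H_1)$ and $(H_3)$ to handle the oscillating multipliers living in $L^{\widetilde B}(\Omega;\mathcal{C}_{per}(Y\times Z))$, one passes to the limit and recovers the triple integral. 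For the upper bound, given a sufficiently regular $v=(v_0,v_1,v_2)\in\mathbb{F}_0^1L^B$, I would use the standard two-scale recovery ansatz $v_\varepsilon(x):=v_0(x)+\varepsilon v_1(x,x/\varepsilon)+\varepsilon^2 v_2(x,x/\varepsilon,x/\varepsilon^2)$, verify that it is admissible in \eqref{PR1}, and prove $F_\varepsilon(v_\varepsilon)\to F(v)$ using continuity of $f$ in $(y,z)$, periodicity, and the Lebesgue-type convergence theorems available in $L^B$ thanks to $\triangle_2$. A density argument then extends this to all $v\in\mathbb{F}_0^1L^B$. The chain
\[F(u)\le\liminf_\varepsilon F_\varepsilon(u_\varepsilon)\le\limsup_\varepsilon F_\varepsilon(v_\varepsilon)=F(v),\]
together with minimality of $u_\varepsilon$, shows that $u$ solves \eqref{MP1}; strict convexity of $F$ on $\mathbb{F}_0^1L^B$, inherited from $(H_2)$, yields uniqueness of $u$ and hence convergence of the full (not just a sub-)sequence, proving both (a) and (b).

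The main obstacle I expect is the construction of the recovery sequence together with the density argument for $\mathbb{F}_0^1L^B$. The ansatz $v_\varepsilon$ is only well-defined once $v_1,v_2$ are regular enough for the pointwise evaluations at $x/\varepsilon$ and $x/\varepsilon^2$ to produce measurable functions lying in $L^B(\Omega)$, and one must identify a dense subspace of $\mathbb{F}_0^1L^B$ on which this works while simultaneously controlling the remainder contributions $\varepsilon D_xv_1+D_yv_1$ and $\varepsilon^2 D_xv_2+\varepsilon D_yv_2+D_zv_2$ in the Orlicz modular. In the classical $L^p$ framework this step is routine, but in the Orlicz-Sobolev setting it requires careful use of the $\triangle_2$ condition and of the modular convergence machinery developed in \cite{fotso nnang 2012}; checking that the resulting error in $F_\varepsilon(v_\varepsilon)-F(v)$ vanishes as $\varepsilon\to 0$, uniformly along the density approximation, is the technical heart of the proof.
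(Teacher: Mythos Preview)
Your overall strategy coincides with the paper's: existence and uniform boundedness of the $u_\varepsilon$, compactness via the reiterated two-scale result, a $\liminf$ inequality, a recovery sequence built on a dense smooth subclass of $\mathbb{F}_0^1L^B$, and uniqueness of the limit problem to upgrade from subsequences to the full sequence.

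The one substantive difference lies in the lower bound. You propose to apply the tangent-plane inequality for $f$ directly and assert that the resulting multipliers live in $L^{\widetilde B}(\Omega;\mathcal{C}_{per}(Y\times Z))$. But $(H_1)$ gives only measurability of $f(\cdot,z,\lambda)$ in $y$, and $f(y,z,\cdot)$ is merely convex, so a subgradient of $f$ evaluated at a smooth test field is neither guaranteed to be single-valued nor continuous in $y$, and hence does not sit in the admissible test-function class for Definition~\ref{def3s}. The paper resolves this by mollifying $f$ in the $\lambda$-variable to obtain $f_m$, which is $C^1$ in $\lambda$ with the explicit growth bound $|\partial_\lambda f_m|\le c(m)(1+b(|\lambda|))$; the derivative $\partial_\lambda f_m(\cdot,\cdot,v_l)$ then lands in $\mathcal{C}(\overline{\Omega};L^\infty_{per}(Y;\mathcal{C}_{per}(Z)))$, against which one may legitimately test via the extended convergence of Proposition~\ref{prop2.4}. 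One proves the $\liminf$ inequality for $f_m$ with an additive error $C'/m$ coming from $\|f-f_m\|$, and then sends $m\to\infty$. So, contrary to your closing assessment, the lower bound is the more delicate half of the argument; the recovery sequence and the density of $F_0^\infty$ in $\mathbb{F}_0^1L^B$ are handled comparatively quickly in the paper (Corollary~\ref{corollary3.1} and the paragraph around~\eqref{Finfty0}).
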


The paper is organized as follows, Section 2 deals with notations, preliminary results on Orlicz-Sobolev spaces, introduction of suitable function spaces to deal with multiple scales homogenization, and compactness result for reiterated two-scale convergence, while Section 3 contains the main results devoted to the proof of Theorem \ref{main}, together with Corollary \ref{maincor2} which allows to recast the main result in the framework of {$\Gamma$}\color{black} convergence (see also \cite{FTNZIMSE} for the single scale case).

\section{Notation and Preliminaries}\label{notations}

In what follows $X$ and
$V$ denote a locally compact space and a Banach space, respectively, and
$C(X; V)$ stands for the space of continuous functions from $X$ into $V$ , and
$C_b(X; V)$ stands for those functions in $C(X; F)$ that are bounded. The space $C_b(X; V)$ is enodowed with the supremum norm $\|u\|_{\infty} = \sup_{x\in X}
\|u(x)\|$ , where
$\|·\|$ denotes the norm in $V$, (in particular, given an open set $A\subset \mathbb R^N$ by $\mathcal C_b(A)$ we denote the space of real valued continuous and bounded functions defined in $A$). Likewise the spaces $L^p(X; V)$ and $L^p_{\rm loc}(X; V)$
($X$ provided with a positive Radon measure) are denoted by $L^p(X)$ and
$L^p_{\rm loc}(X)$, respectively, when $V = \mathbb R$ (we refer to \cite{bour, bour2, edw} for integration theory).

In the sequel we denote by $Y$ and $Z$ two identical copies of the cube $]-1/2,1/2[^N$.  

In order to enlighten the space variable under consideration we will adopt the notation $\mathbb R^N_x, \mathbb R^N_y$, or $\mathbb R^N_z$ to indicate where $x,y $ or $z$ belong to.
 
The family of open subsets in $\mathbb R^N_x$ will be denoted by $\mathcal A(\mathbb R^N_x)$.

For any subset $E$ of $\mathbb R^m$, $m \in \mathbb N$, by $\overline E$, we denote its closure in the relative topology.

For every $x \in \mathbb R^N$ we denote by $[x]$ its integer part, namely the vector in $\mathbb Z^N$, which has as component the integer parts of the components of $x$.

  By $\mathcal L^N$ we denote the Lebesgue measure in $\mathbb R^N$.
\subsection{Orlicz-Sobolev spaces}

\bigskip Let $B:\left[ 0,+\infty \right[ \rightarrow \left[ 0,+\infty \right[
$ be an ${\rm N}-$function \cite{ada}, i.e., $B$ is continuous, convex, with $%
B\left( t\right) >0$ for $t>0,\frac{B\left( t\right) }{t}\rightarrow 0$ as $t\rightarrow 0,$ and $\frac{B\left( t\right) }{t}\rightarrow \infty $ as $%
t\rightarrow \infty .$
Equivalently, $B$ is of the form $B\left( t\right)
=\int_{0}^{t}b\left( \tau \right) d\tau ,$ where $b:\left[ 0,+\infty \right[
\rightarrow \left[ 0,+\infty \right[ $ is non decreasing, right continuous,
with $b\left( 0\right) =0,b\left( t\right) >0$ if $t>0$ and $b\left(
t\right) \rightarrow +\infty $ if $t\rightarrow +\infty .$ 

We denote by $\widetilde{B},$ the complementary ${\rm N}-$function of $B$ defined by $\widetilde{B}(t)=\sup_{s\geq 0}\left\{ st-B\left( s\right) ,t\geq 0\right\} $ . It follows
that 
\begin{equation}\nonumber 
\frac{tb(t)}{B(t)} \geq 1
\;\;(\hbox{or }> \hbox{if }b\hbox{ is strictly increasing}),
\end{equation}
\begin{equation}
\nonumber \widetilde{B}( b(t) )\leq
tb( t) \leq B( 2t) \hbox{ for all }t>0.
\end{equation}
An ${\rm N}-$function $B$ is of class $\triangle _{2}$ (denoted $B\in \triangle
_{2}$) if there are $\alpha >0$ and $t_{0}\geq 0$ such that $B\left(
2t\right) \leq \alpha B\left( t\right) $ for all $t\geq t_{0}$. 

\noindent In all what
follows $B$ and $\widetilde{B}$ are conjugates ${\rm N}-$function$s$
satisfying the $\triangle_2$ (delta-2) condition and $c$ refer to a constant. Let $\Omega $ be a
bounded open set in $\mathbb R^N, (N \in \mathbb N).$ The Orlicz-space
 $$L^{B}\left(
\Omega \right) =\left\{ u:\Omega \rightarrow 
\mathbb R \hbox{ measurable},\lim_{\delta \to 0^+} \int_{\Omega
}B\left( \delta \left\vert u\left( x\right) \right\vert \right) dx=0\right\} 
$$ 
is a Banach space for the Luxemburg norm: $$\left\Vert u\right\Vert
_{B,\Omega }=\inf \left\{ k>0:\int_{\Omega }B\left( \frac{\left\vert u\left(
x\right) \right\vert }{k}\right) dx\leq 1\right\} <+\infty .$$It follows
that: $\mathcal{D}\left( \Omega \right) $ is dense in $L^{B}\left( \Omega
\right), L^{B}\left( \Omega \right) $ is separable and reflexive, the dual
of $L^{B}\left( \Omega \right) $ is identified with $L^{\widetilde{B}}\left(
\Omega \right),$ and the norm on $L^{\widetilde{B}}\left( \Omega \right) $
is equivalent to $\left\Vert \cdot\right\Vert _{\widetilde{B},\Omega }.$
We will denote the norm of elements in $L^{B}\left( \Omega \right)$, both by $\|\cdot\|_{L^{B}\left( \Omega \right)}$ and with $\|\cdot\|_{B, \Omega}$, the latter symbol being useful when we want emphasize the domain $\Omega$.

Futhermore, it is also convenient to recall that:
\begin{itemize} 
	\item[(i)] $
\left\vert \int_{\Omega }u\left( x\right) v\left( x\right) dx\right\vert
\leq 2\left\Vert u\right\Vert _{B,\Omega }\left\Vert v\right\Vert _{%
\widetilde{B},\Omega }$ for $u\in L^{B}\left( \Omega \right) $ and $v\in L^{%
\widetilde{B}}\left( \Omega \right) $, 
\item[(ii)] given $v\in L^{%
\widetilde{B}}\left( \Omega \right) $ the linear functional $L_{v}$ on $%
L^{B}\left( \Omega \right) $ defined by $L_{v}\left( u\right) =\int_{\Omega
}u\left( x\right) v\left( x\right) dx,\left( u\in L^{B}\left( \Omega \right)
\right) $ belongs to the dual $\left[ L^{B}\left( \Omega \right) \right]
^{\prime }=L^{\widetilde{B}}\left( \Omega \right) $ with $\left\Vert
v\right\Vert _{\widetilde{B},\Omega }\leq \left\Vert L_{v}\right\Vert _{\left[ L^{B}\left( \Omega \right) \right] ^{\prime }}\leq 2\left\Vert
v\right\Vert _{\widetilde{B},\Omega }$, 
\item[(iii)]  the property $\lim_{t \to +\infty} \frac{B\left( t\right) }{t}=+\infty $
implies $L^{B}\left( \Omega \right) \subset L^{1}\left( \Omega \right)
\subset L_{loc}^{1}\left( \Omega \right) \subset \mathcal{D}^{\prime }\left(
\Omega \right),$ each embedding being continuous.
\end{itemize}

For the sake of notations, given any $d\in \mathbb N$, when $u:\Omega \to \mathbb R^d$, such that each component $(u^i)$, of $u$, lies in $L^B(\Omega)$  
we will denote the norm of $u$ with the symbol $\|u\|_{L^B(\Omega)^{d}}:=\sum_{i=1}^d \|u^i\|_{B,\Omega}$.

Analogously one can define the Orlicz-Sobolev functional space as follows: 

\noindent$%
W^{1}L^{B}\left( \Omega \right) =\left\{ u\in L^{B}\left( \Omega \right) :%
\frac{\partial u}{\partial x_{i}}\in L^{B}\left( \Omega \right) ,1\leq i\leq
d\right\} ,$ where derivatives are taken in the distributional sense on $%
\Omega .$ Endowed with the norm $\left\Vert u\right\Vert _{W^{1}L^{B}\left(
\Omega \right) }=\left\Vert u\right\Vert _{B,\Omega }+\sum_{i=1}^{d}$ $%
\left\Vert \frac{\partial u}{\partial x_{i}}\right\Vert _{B,\Omega },u\in
W^{1}L^{B}\left( \Omega \right) ,$\ \ $W^{1}L^{B}\left( \Omega \right) $ is
a reflexive Banach space. We denote by $W_{0}^{1}L^{B}\left( \Omega \right)
, $ the closure of $\ \mathcal{D}\left( \Omega \right) $\ in $%
W^{1}L^{B}\left( \Omega \right) $ and the semi-norm $u\rightarrow \left\Vert
u\right\Vert _{W_{0}^{1}L^{B}\left( \Omega \right) }=\left\Vert
Du\right\Vert _{B,\Omega }=\sum_{i=1}^{d}$ $\left\Vert \frac{\partial u}{%
\partial x_{i}}\right\Vert _{B,\Omega }$ is a norm on $W_{0}^{1}L^{B}\left(
\Omega \right) $ equivalent to $\left\Vert \cdot \right\Vert _{W^{1}L^{B}\left(
\Omega \right) }.$

By $W_{\#}^{1}L^{B}\left( Y\right)$, we denote the space of functions $u \in W^1L^B(Y)$ such that $\int_Y u(y)d y=0$.  It is endowed with the gradient norm.
Given a function space $S$ defined in $Y$, $Z$ or $Y\times Z$, the subscript $S_{per}$ means that the functions are periodic in $Y$, $Z$ or $Y\times Z$, as it will be clear from the context. In particular $C_{per}(Y\times Z)$ denote the space of periodic functions in $C(\mathbb R^N_y\times \mathbb R^N_z)$, i.e. that verify $w(y + k, z + h) = w(y, z)$ for $(y, z) \in \mathbb R^N \times \mathbb R^N$
and $(k, h) \in \mathbb Z^N × \mathbb Z^N$. $C^\infty_{per}(Y\times Z)=C_{per}(Y\times Z)\cap C^\infty(\mathbb R^N_y\times \mathbb R^N)$, and $L^p
_{per}(Y \times Z)$ is the space of
$Y \times Z$ -periodic functions in $L^p_{loc}(\mathbb R^N_y
\times \mathbb R^N_z)$.

\subsection{Fundamentals of reiterated homogenization in Orlicz spaces}


This subection is devoted to show some results which are useful for an explicit construction of reiterated multiscale convergence in the Orlicz setting. Indeed all the definitions are given starting from spaces of regular functions, then several norms are introduced together with proofs of functions spaces' properties. 
	On the other hand we will not present neither arguments which are very similar to the ones used to deal with standard two scale convergence in the Orlichz setting, nor those related to reiterated two-scale convergence in the standard Sobolev setting (for the latter we refer to \cite[Sections 2 and 4]{nnang reit}). 

We start by defining rigorously the traces of the form $u\left( x,\frac{x}{\varepsilon 
},\frac{x}{\varepsilon ^{2}}\right) ,x\in \Omega,$ $\varepsilon >0$. 
We will consider several cases, according to the regularity of $u$.

\emph{Case 1: }$u\in \mathcal{C}\left( \Omega \times 
\mathbb R_y^N\times
\mathbb R_z^N\right) $

We define 
 \begin{equation*}
 u^{\varepsilon }\left(
x\right) :=u\left( x,\frac{x}{\varepsilon },\frac{x}{\varepsilon ^{2}}\right)
\end{equation*}
Obviously $u^{\varepsilon }\in \mathcal{C}\left( \Omega
\right) .$ We define the trace operator of order $\varepsilon
>0,(t_{\varepsilon })$ by 
\begin{equation}
\label{traceoperator}t_{\varepsilon }:u \in \mathcal{C}\left( \Omega
\times 
\mathbb R
_{y}^{N}\times 
\mathbb R_z^N\right) \longrightarrow u^\varepsilon \in \mathcal{C}\left( \Omega \right).
\end{equation}

It results that the operator $t^{\varepsilon }$ in \eqref{traceoperator} is linear and continuous.
\color{black}

\medskip
\emph{Case 2: }$u\in \mathcal{C}\left( \overline{\Omega };\mathcal C_b\left( 
\mathbb R_{y}^{N}\times 
\mathbb R_z^N\right) \right) $.

$\mathcal{C}\left( \overline{\Omega };\mathcal C_b\left( 
\mathbb R_y^N\times \mathbb R
_z^N\right) \right) \subset \mathcal{C}\left( \overline{\Omega };%
\mathcal{C}\left(
\mathbb R_y^N\times 
\mathbb R_z^N\right) \right) \widetilde{=}\mathcal{C}\left( \overline{\Omega }%
\times 
\mathbb R_y^N\times
\mathbb R_z^N\right) .$ We can then consider $\mathcal{C}\left( \overline{\Omega }%
;\mathcal C_b\left( 
\mathbb R_y^N\times 
\mathbb R_z^N\right) \right) $ as a subspace of $\mathcal{C}\left( \overline{%
\Omega }\times 
\mathbb R
_y^N\times 
\mathbb R_z^N\right) $. Since $\overline{\Omega }$\ \ is compact in $\mathbb R_{x}^{N},$ then $u^{\varepsilon }\in \mathcal C_b\left( \Omega \right) $ and
the above operator can be considered from $\mathcal{C}\left( \overline{\Omega };%
\mathcal C_b\left(
\mathbb{R}
_{y}^{N}\times 
\mathbb{R}
_{z}^{N}\right) \right) $ to $\mathcal C_b\left( \Omega \right),$
as linear and continuous.
\medskip

\emph{Case 3: }$u\in L^{B}( \Omega ;V)$ where $V$ is a closed vector subspace of $\mathcal C_b\left( \mathbb R _y^N\times 
\mathbb R_z^N\right).$

Recall that $u\in L^{B}(\Omega ;V) $ means the function $x\rightarrow \left\Vert
u\left( x\right) \right\Vert _{\infty }\ \ \ $from $\Omega $ into $
\mathbb R 
$ belongs to $L^{B}\left( \Omega \right) $ and
\begin{equation*}
\left\Vert u\right\Vert _{L^{B}\left( \Omega ;\mathcal C_b\left(\mathbb R
_{y}^{N}\times 
\mathbb R_{z}^{N}\right) \right) }=\inf \left\{ k>0:\int_{\Omega }B\left( \frac{
\left\Vert u\left( x\right) \right\Vert _{\infty }}{k}\right) dx\leq
1\right\} <+\infty .
\end{equation*}

Let $u\in \mathcal{C}\left( \overline{\Omega };C_b\left( 
\mathbb{R}
_{y}^{N}\times 
\mathbb{R}
_{z}^{N}\right) \right) ,$ then $\left\vert u^{\varepsilon }\left( x\right)
\right\vert =\left\vert u\left( x,\frac{x}{\varepsilon },\frac{x}{%
\varepsilon ^{2}}\right) \right\vert \leq \left\Vert u\left( x\right)
\right\Vert _{\infty }.$ As ${\rm N}-$functions are non decreasing we deduce that: 
\begin{equation*}
B\left( \frac{\left\vert u^{\varepsilon }\left( x\right) \right\vert }{k}%
\right) \leq B\left( \frac{\left\Vert u\left( x\right) \right\Vert _{\infty }%
}{k}\right) ,\hbox{ for all } k>0,\hbox{ for all } x\in \overline{\Omega }.
\end{equation*}
Hence we get $\int_{\Omega }B\left( \frac{\left\vert u^{\varepsilon }\left(
x\right) \right\vert }{k}\right) dx\leq \int_{\Omega }B\left( \frac{%
\left\Vert u\left( x\right) \right\Vert _{\infty }}{k}\right) dx,$ thus $%
\int_{\Omega }B\left( \frac{\left\Vert u\left( x\right) \right\Vert _{\infty
}}{k}\right) dx\leq 1\Longrightarrow \int_{\Omega }B\left( \frac{\left\vert
u^{\varepsilon }\left( x\right) \right\vert }{k}\right) dx\leq 1,$ that is, 
$$\left\Vert u^{\varepsilon }\right\Vert _{L^{B}\left( \Omega \right) }\leq
\left\Vert u\right\Vert _{L^{B}\left( \Omega ;\mathcal C_b\left( 
\mathbb{R}
_{y}^{N}\times 
\mathbb{R}
_{z}^{N}\right) \right) }.$$ Therefore the trace operator $u\rightarrow
u^{\varepsilon }$ from $\mathcal{C}\left( \overline{\Omega };V \right) $ into $L^{B}\left( \Omega \right) ,$ extends by
density and continuity to a unique operator from $L^{B}( \Omega ;\mathcal C_b(V)) $.

It will be still denoted  by $$ t^\varepsilon :
u\rightarrow u^{\varepsilon }$$ and it verifies: \begin{equation}
\label{tracebounds}\left\Vert u^{\varepsilon
}\right\Vert _{L^{B}\left( \Omega \right) }\leq \left\Vert u\right\Vert
_{L^{B}\left( \Omega ;\mathcal C_b\left( 
\mathbb{R}
_{y}^{N}\times 
\mathbb{R}
_{z}^{N}\right) \right) }, \hbox{ for all }u\in L^{B}\left( \Omega ;(V) \right) .
\end{equation}
\noindent In order to deal with reiterated multiscale convergence we need to have good definition for the measurability of test functions, so we should ensure measurability for the trace of elements $u\in L^{\infty }\left( 
\mathbb{R}
_{y}^{N};\mathcal C_b\left( 
\mathbb{R}
_{z}^{N}\right) \right) $ and $u\in \mathcal{C}\left( \overline{\Omega }%
;L^{\infty }\left( 
\mathbb{R}
_{y}^{N};\mathcal C_b\left( 
\mathbb{R}
_{z}^{N}\right) \right) \right) $, but we omit these proofs, referring to \cite[Section 2]{nnang reit}.

Let $M:\mathcal C_{per}\left( Y\times Z\right) \rightarrow 
\mathbb R$ be the mean value functional (or equivalently 'averaging operator') defined as 
\begin{equation}
\label{M}
u\rightarrow M(u):=\iint_{Y\times Z}u\left( x,y\right) dxdy.
\end{equation}
It results that 
\begin{itemize}
\item[(i)] $M$ is nonnegative, i.e. $M\left( u\right) \geq 0\
\hbox{ for all } u\in \mathcal{C}_{per}(Y\times Z) ,u\geq 0;$
\item[(ii)] $M$ is continuous on $\mathcal{C%
}_{per}\left( Y\times Z\right) $ (for the sup norm); 
\item[(iii)]
$M\left( 1\right) =1$;
\item[(iv)]  $M$ is translation invariant.
\end{itemize}


\medskip
In the same spirit of \cite{nnang reit}, for the given ${\rm N}-$function $B$, we define $\Xi ^{B}\left( \mathbb{R}
_{y}^{N};\mathcal C_b\left( 
\mathbb{R}
_{z}^{N}\right) \right) $ or simply $\Xi ^{B}\left( 
\mathbb{R}
_{y}^{N};\mathcal C_b\right) $ the following space 
\begin{align}\label{Xi}
\Xi ^{B}\left( 
\mathbb{R}_{y}^{N};\mathcal C_b\right):=
\Big\{ u \in L_{loc}^{B}\left( 
\mathbb{R}
_{x}^{N};C_b\left( 
\mathbb{R}
_{z}^{N}\right) \right): \hbox{for every } U \in {\mathcal A}(\mathbb R^N_x): \nonumber\\
\left. \underset{0<\varepsilon \leq 1}{\sup }\inf \left\{
k>0,\int_{U}B\left( \frac{\left\Vert u\left( \frac{x}{\varepsilon },\cdot\right)
	\right\Vert _{L^{\infty }}}{k}\right) dx\leq 1\right\} <\infty \right\}.
\end{align}

 Hence putting 
\begin{equation}\label{originalnormLBPer}
\left\Vert u\right\Vert _{\Xi ^{B}\left( 
\mathbb{R}
_{y}^{N};\mathcal C_b\left( 
\mathbb{R}
_{z}^{N}\right) \right) }=\underset{0<\varepsilon \leq 1}{\sup }\inf \left\{
k>0,\int_{B_{N}(0,1)}B\left( \frac{\left\Vert u\left( \frac{x}{\varepsilon }%
,\cdot\right) \right\Vert _{L^{\infty }}}{k}\right) dx\leq 1\right\} ,
\end{equation}%
 with $B_N(0,1)$ being the unit ball of $
\mathbb{R}_x^N$ centered at the origin, we have a norm on \ $\Xi ^{B}\left( 
\mathbb{R}
_y^N;\mathcal C_b\left(
\mathbb{R}_z^N\right) \right) $ which makes it a Banach space. 

We also denote by $\mathfrak{X}_{per}
^{B}\left( 
\mathbb R _{y}^{N};\mathcal C_b\right) $ the closure of $\mathcal{C}_{per}\left(
Y\times Z\right) $\ in \ $\Xi ^{B}\left(
\mathbb R
_{y}^{N};\mathcal C_b\right) .$

Recall that $L_{per}^{B}\left( Y\times Z\right) $
denotes the space of functions in $L^B_{\rm loc}(\mathbb R_y^N\times \mathbb R^N_z)$ which are $Y\times Z$-periodic.

\noindent Clearly $\left\Vert \cdot\right\Vert _{B,Y\times Z}$ is a norm on $L_{per}^{B}\left(
Y\times Z\right) $, namely it suffices to consider the $L^B$ norm just on  the unit period.



Let $u\in \mathcal{C}_{per}\left( Y\times Z\right) $ , we have 
$$
\left\vert \int_{B_N(0,1)}u\left( \frac{x}{\varepsilon },\frac{x}{\varepsilon
	^{2}}\right) dx\right\vert \leq \int_{B_N(0,1)}\left\Vert u\left( \frac{x}{%
	\varepsilon },\cdot\right) \right\Vert _{\infty }dx\leq 2\left\Vert 1\right\Vert
_{\widetilde{B},B_N(0,1)}\left\Vert u\right\Vert _{\Xi^{B}\left(
	\mathbb R_y^N;\mathcal C_b\left( 
	\mathbb R_z^N\right) \right) }.$$

The following result, useful to prove estimates which involve test functions on oscillating arguments (see for instance Proposition \ref{propcomp}),  is a preliminary instrument which aims at comparing the $L^B$ norm in $Y\times Z$ with the one in \eqref{originalnormLBPer}.

\begin{lemma}\label{lemma2.2}
 There exists $C\in \mathbb R^+$ such that $\left\Vert u^{\varepsilon }\right\Vert
_{B,B_N(0,1)}\leq C \left\Vert u\right\Vert
_{B,Y\times Z },$ for every $0<\varepsilon \leq 1$, and $u\in \mathfrak{X}_{per}^{B}\left( 
\mathbb R
_{y}^{N};\mathcal C_b\right),$ 
\end{lemma}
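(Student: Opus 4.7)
The plan is to reduce, by density, to the case of continuous periodic test functions and then to establish the estimate by a double tiling of $B_N(0,1)$ at the two relevant scales. By the very definition of $\mathfrak{X}_{per}^B(\mathbb R^N_y;\mathcal C_b)$ as the closure of $\mathcal C_{per}(Y\times Z)$ in the $\Xi^B$-norm \eqref{originalnormLBPer}, together with the continuity of the trace operator $t^{\varepsilon}$ established in Case 3 (see \eqref{tracebounds}) and the fact that the $\Xi^B$-norm dominates $\|\cdot\|_{B,Y\times Z}$, it suffices to prove the inequality for $u\in\mathcal C_{per}(Y\times Z)$ with a constant $C$ independent of both $u$ and $\varepsilon$.

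For such $u$ and for any $k>0$, I would establish the integral bound
\begin{equation*}
\int_{B_N(0,1)} B\bigl(|u(x/\varepsilon,x/\varepsilon^2)|/k\bigr)\, dx \;\leq\; C_0 \int_{Y\times Z} B\bigl(|u(y,z)|/k\bigr)\, dy\, dz,
\end{equation*}
with $C_0$ absolute. The argument uses two successive tilings. First, cover $B_N(0,1)$ by the $\varepsilon$-cubes $\varepsilon(i+Y)$, $i\in I_\varepsilon$, with $|I_\varepsilon|\leq c_1\varepsilon^{-N}$; the change of variables $x=\varepsilon(i+y)$, $y\in Y$, together with $Y$-periodicity of $u$ in its first argument, gives
\begin{equation*}
\int_{\varepsilon(i+Y)} B(|u^\varepsilon|/k)\, dx = \varepsilon^N\int_Y B\bigl(|u(y,(i+y)/\varepsilon)|/k\bigr)\, dy.
\end{equation*}
Next, tile $Y$ by the $\varepsilon$-cubes $\varepsilon(m+Z)$, write $y=\varepsilon(m+z)$ with $z\in Z$, and exploit $Z$-periodicity of $u$ in the second argument to replace $(i+y)/\varepsilon$ by $z+\alpha_i$, where $\alpha_i\in Z$ denotes the fractional part of $i/\varepsilon$ modulo $\mathbb Z^N$. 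Summing over $i$ and $m$, applying Fubini, the translation invariance of the integral over a full $Z$-period (which absorbs the shift $\alpha_i$), and bounding the resulting Riemann sum over the $\varepsilon$-lattice in $Y$ by a constant multiple of the integral over $Y$ by means of the uniform continuity of $u$ on the compact period and the $\triangle_2$ property of $B$, one recovers the displayed inequality.

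The integral estimate is then converted into the Luxemburg-norm estimate as in the single-scale case: since $B$ is convex with $B(0)=0$, we have $B(t/C_0)\leq B(t)/C_0$ for $C_0\geq 1$. Setting $k:=C_0\|u\|_{B,Y\times Z}$, the Luxemburg-norm definition gives $\int_{Y\times Z}B(|u|/k)\, dy\, dz\leq 1/C_0$, so the integral bound yields $\int_{B_N(0,1)}B(|u^\varepsilon|/k)\, dx\leq 1$, whence $\|u^\varepsilon\|_{B,B_N(0,1)}\leq k = C_0\|u\|_{B,Y\times Z}$.

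The hard part is controlling, uniformly in $\varepsilon\in(0,1]$, the two error sources in the double tiling: the partial cubes meeting $\partial B_N(0,1)$, whose contribution is of lower order (of relative size $O(\varepsilon)$) and is absorbed into $C_0$; and the discrepancy between the finite Riemann sum along the inner $\varepsilon$-lattice of $Y$ and the limiting integral over $Y$, handled via uniform continuity of $u\in \mathcal C_{per}(Y\times Z)$ on the compact set $Y\times Z$ combined with $B\in\triangle_2$. The shifts $\alpha_i$ are precisely what is killed by translation invariance of $\int_Z$, and this is the mechanism by which the two-scale oscillation of $u^\varepsilon$ collapses into the single-cell integral $\int_{Y\times Z}$ on the right-hand side.
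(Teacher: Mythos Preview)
Your density reduction to $\mathcal C_{per}(Y\times Z)$ and your final conversion of the integral estimate into a Luxemburg-norm estimate via convexity of $B$ are both correct and match the paper. The substantive difference is the \emph{order} of the two tilings, and here your choice creates a genuine gap.

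The paper tiles $B_N(0,1)$ at the \emph{fine} scale $\varepsilon^2$ first: the change of variables $x=\varepsilon^2(k+z)$, $z\in Z$, together with $Z$-periodicity in the second slot, turns $x/\varepsilon^2$ into a true free variable $z\in Z$ and produces integrals $\int_Z B(|u(\varepsilon k+\varepsilon z,\,z)|)\,dz$. After this step the second argument of $u$ ranges over one full period independently of the outer variable, and a second tiling at the \emph{coarse} scale $\varepsilon$, together with $Y$-periodicity in the first slot, yields the bound by $n(\varepsilon)\varepsilon^N\iint_{Y\times Z}B(|u|)$ with $n(\varepsilon)\varepsilon^N\le \mathcal L^N(H)$. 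No Riemann-sum approximation enters.

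In your order (an $\varepsilon$-tiling of $B_N(0,1)$, then an $\varepsilon$-tiling of $Y$), after your second change of variables $y=\varepsilon(m+z)$ the integrand is $B(|u(\varepsilon(m+z),\,\alpha_i+z)|)$, and both slots depend on $z$. The shift $z\mapsto z-\alpha_i$ that you invoke to ``absorb $\alpha_i$ by translation invariance of $\int_Z$'' therefore moves the first argument as well, so the integral over $Z$ is \emph{not} invariant and the step fails as written. Your fallback---controlling the inner Riemann sum over the $\varepsilon$-lattice in $Y$ by a constant multiple of $\int_Y$ via uniform continuity of $u$ and $\triangle_2$---does give a bound, but its constant depends on the modulus of continuity of $u$ (equivalently on $B(|u|/k)$), not an absolute $C_0$; it is precisely this $u$-dependence that the density argument cannot remove. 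Reversing the order of your two tilings (fine scale first) is what repairs the argument.
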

\color{black}
\begin{proof}
Let $\varepsilon >0$.
We start observing that  we can always find a compact set $H \subset \mathbb R^N$ (independent on $\varepsilon$) such that 
$$B_N(0,1)\subseteq \cup_{k \in Z_{\varepsilon^2}} \varepsilon^2 (k+ Z) \subseteq H$$
where 
$Z_{\varepsilon^2}=\left\{k \in \mathbb Z^N: \varepsilon^2 (k+ Z)\cap \overline{B_N(0,1)}\not= \emptyset \right\}$. 

Define also $B_{N,\varepsilon^2}:={\rm int}\left(\bigcup_{k \in Z_{\varepsilon^2}} \varepsilon^2 (k+ \overline Z)\right)$. $B_N(0,1) \subset B_{N,\varepsilon^2}$.

Thus
\begin{align*}
\int_{B_N(0,1)} B\left(\left|u\left(\frac{x}{\varepsilon}, \frac{x}{\varepsilon^2} \right)\right|\right)dx \leq
\int_{\bigcup_{k \in Z_{\varepsilon^2}} \varepsilon^2 (k+ \overline Z)}B\left(\left|u\left(\frac{x}{\varepsilon}, \frac{x}{\varepsilon^2} \right)\right|\right)dx=\\
\displaystyle{\sum_{i=1}^{n(\varepsilon^2)} \varepsilon^{2N}\int_Z B\left(\left|u\left(\frac{\varepsilon^2 k_i + \varepsilon^2 z}{\varepsilon}, \frac{\varepsilon^2 k_i + \varepsilon^2 z}{\varepsilon^2} \right)\right|\right)dz=}\\
 \displaystyle{\sum_{i=1}^{n(\varepsilon^2)} \varepsilon^{2N}\int_Z B\left(\left|u\left(\varepsilon k_i + \varepsilon z, z\right)\right|\right)dz},
\end{align*}
where we have used the change of variables $x= \varepsilon^2 (k_i +z)$, in each cube $\varepsilon^2 (k_i+ Z)$, the periodicity of $u$ in the second variable, the fact that we can cover $B_N(0,1)$ with a finite number of cubes $\varepsilon^2(k_i+Z)$, depending on $\varepsilon^2$ and denoted by $n(\varepsilon^2)$.

Since $\left[\frac{x}{\varepsilon^2}\right]= k_i$ and $[z]=0$ for every $x \in \varepsilon^2( k_i+ Z)$ and $z\in Z$ and $\mathcal L^N(\varepsilon^2 (k_i+Z))=\varepsilon^{2N}$, we can write
\begin{align*}
\int_{B_N(0,1)} B\left(\left|u\left(\frac{x}{\varepsilon}, \frac{x}{\varepsilon^2} \right)\right|\right)dx \leq
\displaystyle{\sum_{i=1}^{n(\varepsilon^2)} \varepsilon^{2N}\int_Z B\left(\left|u\left(\varepsilon \left[\frac{x}{\varepsilon^2}\right] + \varepsilon z,z \right)\right|\right)dz\leq}\\
\displaystyle{\sum_{i=1}^{n(\varepsilon^2)} \int_{\varepsilon^2(k_i+Z)}\int_Z B\left(\left|u\left(\varepsilon \left[\frac{x}{\varepsilon^2}\right] + \varepsilon z,z \right)\right|\right)dzdx \leq}\\
\iint_{B_{N,\varepsilon^2}\times Z} B\left(\left|u\left(\varepsilon \left[\frac{x}{\varepsilon^2}\right] + \varepsilon z,z \right)\right|\right)dzdx=\\
\iint_{B_{N,\varepsilon^2}\times Z}B\left(\left|u\left( \frac{x}{\varepsilon}, z\right)\right|\right)dxdz,
\end{align*}
where in the third line above we have used the fact that $\frac{x}{\varepsilon}= \varepsilon \left[\frac{x}{\varepsilon^2}\right] + \varepsilon z$.

Now, making again another change of variable of the same type, i.e. $y+ h_i= x/\varepsilon$, after a covering of $B_{N,\varepsilon^2}$ made by $\bigcup_{h_i \in Z_{\varepsilon}}\varepsilon (h_i + Y)$, where $Z_{\varepsilon}=\left\{h \in \mathbb Z^N: \varepsilon (h+ Y)\cap \overline{B_{N,\varepsilon^2}}\not= \emptyset \right\}$ we have

\begin{align*}
\iint_{B_{N,\varepsilon^2}\times Z}B\left(\left|u\left( \frac{x}{\varepsilon}, z\right)\right|\right)dxdz\leq
\\
\displaystyle{\sum_{i=1}^{n(\varepsilon)}\varepsilon^N
	\iint_{{h_i +Y}\times Z}B\left(\left|u\left(\frac{\varepsilon h_i +\varepsilon y}{\varepsilon},z\right)\right|\right)dydz}\leq\\
\sum_{i=1}^{n(\varepsilon)}\varepsilon^N\iint_{Y\times Z} B\left(\left|u(y,z)\right|\right)dydz,
\end{align*}




Up to another choice of $0<\varepsilon_0\leq 1$, we can observe that, given $\varepsilon < \varepsilon_0$, $B_N(0,1)\subset B_{N,\varepsilon^2}$ and also $B_N(0,1)\subset \cup_{i=1}^{n(\varepsilon)} \varepsilon(h_i + Y)$. 
On the other hand there is a compact $H$, which contains $\cup_{i=1}^{n(\varepsilon)} \varepsilon(h_i + Y)$ and whose measure satisfies the following inequality ${\mathcal L}^N(H)\geq \sum_{i=1}^{n(\varepsilon)}\varepsilon^N$. 



Essentially repeating the same above computations, we have for every  $k \in \mathbb R^+$, and $0<\varepsilon \leq \varepsilon _{0}$ and $u\in L^B_{\rm per}(Y\times Z)$
:



\begin{equation*}
\int_{B_N(0,1)}B\left(\left|\frac{u\left( \frac{x}{\varepsilon },\frac{x}{\varepsilon ^{2}}\right) }{k}\right|\right) dx\leq \varepsilon ^{N}\sum_{i=1}^{n\left( \varepsilon \right) }\iint_{Y\times Z}B\left(\left| \frac{u\left( y,z\right) }{k}\right|\right) dydz.
\end{equation*}
For $k=\left\Vert u\right\Vert _{B,Y \times Z }$
using the convexity of $B$, and the fact that $B(0)=0$, we get: 
\begin{equation*}
\begin{array}{ll}
\int_{B_N(0,1)}B\left( \left|\frac{u\left( \frac{x}{\varepsilon },\frac{x}{%
\varepsilon ^{2}}\right) }{\left( 1+\mathcal L^N\left( H\right) \right) \left\Vert
u\right\Vert _{B,Y \times Z}}\right|\right) dx\leq \frac{1}{\left( 1+\mathcal L^N\left(
H\right) \right) }\int_{B_N(0,1)}B\left(\left| \frac{u\left( \frac{x}{\varepsilon },%
\frac{x}{\varepsilon ^{2}}\right) }{\left\Vert u\right\Vert _{B,Y \times Z }}%
\right|\right) dx\\
\displaystyle{\leq \varepsilon ^{N}\sum_{i=1}^{n\left( \varepsilon
\right) }\iint_{Y\times Z}B\left(\left| \frac{u\left( y,z\right) }{%
\left\Vert u\right\Vert _{B,Y \times Z }}\right|\right) dydz\times \frac{1}{\left(
1+\mathcal L^N \left( H\right) \right) }}\\
\displaystyle{\leq \frac{n\left( \varepsilon \right)
\varepsilon ^{N}}{\left( 1+\mathcal L^N\left( H\right) \right) } \iint_{Y\times Z}B\left(\left| \frac{u\left( y,z\right) }{%
\left\Vert u\right\Vert _{B,Y \times Z }}\right|\right) dydz}\\
\displaystyle{\leq \frac{\mathcal L^N\left( H\right) }{\left( 1+\mathcal L^N( H) \right) }}
\displaystyle{\iint_{Y\times Z}B\left(\left| \frac{u( y,z) }{\left\Vert u\right\Vert _{B,Y \times Z }}\right|\right) dydz< 1,}
\end{array}
\end{equation*}
where the non decreasing behavour of $B$ has been exploited. Therefore, by the definition of norm in $B_N(0,1)$, $\left\Vert u^{\varepsilon }\right\Vert _{B,B_N(0,1)}\leq \left(
1+\mathcal L^N\left( H\right) \right) \left\Vert u\right\Vert _{B,Y \times Z }$.
\end{proof}

\begin{lemma}
The mean value operator $M$ defined on $\mathcal{C}_{per}\left( Y\times Z\right) $ by \eqref{M}
can be
extended by continuity to a unique linear and continuous functional denoted in
the same way from $\mathfrak{X}_{per}^{B}\left( 
\mathbb R_y^N;\mathcal C_b\right) $\ to $
\mathbb R$ such that 
\begin{itemize}
	\item 	$M$ is non negative, i.e. for all $u\in \mathfrak{X}_{per}^{B}\left(\mathbb R_y^N;\mathcal C_b\right) ,u\geq 0\Longrightarrow M( u) \geq0,$
		\item $M$ is translation invariant.
\end{itemize} 
\end{lemma}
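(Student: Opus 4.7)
The plan is to apply the bounded-linear-transformation principle: first show that $M$ is continuous on the dense subspace $\mathcal{C}_{per}(Y\times Z)\subset \mathfrak{X}_{per}^B(\mathbb{R}_y^N;\mathcal{C}_b)$ equipped with the $\Xi^B$-norm, then extend by density, and finally transfer the two properties from the dense subspace.

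For the key bound, I would start from the estimate already displayed above in the paper, namely
\[
\left|\int_{B_N(0,1)} u\!\left(\tfrac{x}{\varepsilon},\tfrac{x}{\varepsilon^2}\right) dx\right| \;\leq\; 2\,\|1\|_{\widetilde B, B_N(0,1)}\,\|u\|_{\Xi^B(\mathbb{R}_y^N;\mathcal{C}_b)},
\]
valid uniformly for $0<\varepsilon\leq 1$ and for every $u\in\mathcal{C}_{per}(Y\times Z)$. Passing to the limit $\varepsilon\to 0^+$ via the classical mean-value property for continuous periodic functions in the two-scale form,
\[
\int_{B_N(0,1)} u\!\left(\tfrac{x}{\varepsilon},\tfrac{x}{\varepsilon^2}\right) dx \;\longrightarrow\; \mathcal{L}^N(B_N(0,1))\,M(u),
\]
produces $|M(u)|\leq C\,\|u\|_{\Xi^B}$ with $C:=2\|1\|_{\widetilde B,B_N(0,1)}/\mathcal{L}^N(B_N(0,1))$. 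Hence $M$ is a bounded linear functional on the dense subspace; by completeness of $\mathbb{R}$ it admits a unique continuous linear extension to $\mathfrak{X}_{per}^B$, which I retain denoting by $M$.

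For nonnegativity, given $u\geq 0$ in $\mathfrak{X}_{per}^B$, I would pick an approximating sequence $u_n\in\mathcal{C}_{per}(Y\times Z)$ with $u_n\to u$ in the $\Xi^B$-norm and truncate by $u_n^+:=\max(u_n,0)$, which remains in $\mathcal{C}_{per}(Y\times Z)$. Since $u=u^+$ and the pointwise contraction $|u_n^+-u^+|\leq|u_n-u|$ holds, the monotonicity of the $\mathrm N$-function $B$ entering the definition of $\|\cdot\|_{\Xi^B}$ gives $\|u_n^+-u\|_{\Xi^B}\leq\|u_n-u\|_{\Xi^B}\to 0$; together with $M(u_n^+)\geq 0$ and continuity of $M$, this yields $M(u)\geq 0$. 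For translation invariance, the $\Xi^B$-norm is invariant under translations in the inner variable (the defining supremum over $\varepsilon$ absorbs any shift thanks to a covering argument entirely analogous to the one used in Lemma~\ref{lemma2.2}), while on the dense subspace $M(\tau_h u)=M(u)$ is immediate from periodicity and a change of variables; continuity of the extended $M$ then propagates the identity to $\mathfrak{X}_{per}^B$.

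The step I expect to be most delicate is the limit passage $\int_{B_N(0,1)} u^\varepsilon\,dx \to \mathcal{L}^N(B_N(0,1))\,M(u)$ for $u\in\mathcal{C}_{per}(Y\times Z)$: although it is a classical mean-value property, its two-scale form $u(x/\varepsilon,x/\varepsilon^2)$ requires some care, and one typically checks it first on tensor products $u(y,z)=a(y)b(z)$, exploiting the asymptotic independence of the two rapidly oscillating arguments, and then extends to all of $\mathcal{C}_{per}(Y\times Z)$ by Stone--Weierstrass density.
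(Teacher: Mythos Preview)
Your proposal is correct and follows essentially the same approach as the paper's (very terse) proof: use the displayed estimate $\bigl|\int_{B_N(0,1)} u^\varepsilon\,dx\bigr|\leq 2\|1\|_{\widetilde B,B_N(0,1)}\|u\|_{\Xi^B}$ together with the mean-value limit to bound $M$ on $\mathcal{C}_{per}(Y\times Z)$ in the $\Xi^B$-norm, then extend by density and transfer the two properties. The paper merely cites density, the definition of the norm, and the continuity of the trace $v\mapsto v^\varepsilon$ (which is what underlies the displayed estimate), without spelling out the limit passage or the transfer of nonnegativity and translation invariance; your write-up supplies those details and is, if anything, more complete than the paper's own argument.
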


\begin{proof}
It is a consequence of the very defintions \eqref{Xi} and of $\mathfrak{X}_{per}^{B}\left( 
\mathbb{R}
_{y}^{N};\mathcal C_b\right)$, of the density of $\mathcal{C}_{per}\left( Y\times
Z\right) $ in $\mathfrak{X}_{per}^{B}\left( 
\mathbb{R}
_{y}^{N};\mathcal C_b\right) ,$ of the continuity of $M$ on $%
\mathfrak{X}_{per}^{B}\left( 
\mathbb{R}
_{y}^{N};\mathcal C_b\right) $ and of the continuity of $v\rightarrow
v^{\varepsilon }$ from $\mathfrak{X}_{per}^{B}\left( 
\mathbb{R}
_{y}^{N};\mathcal C_b\right) $\ to $L^{B}\left( \Omega \right) $, (see \eqref{tracebounds}).
\end{proof}

\medskip 
Now we endow $\mathfrak{X}_{per}^{B}\left(\mathbb R_y^N;\mathcal C_b\right) $ with another norm. Indeed we define $\mathfrak{X}%
	_{per}^{B}\left( 
	\mathbb{R}
	_{y}^{N}\times 
	\mathbb{R}
	_{z}^{N}\right) $ the closure of $\mathcal{C}_{per}\left( Y\times Z\right) $
	in $L_{loc}^{B}\left( 
	\mathbb{R}
	_{y}^{N}\times 
	\mathbb{R}
	_{z}^{N}\right) $ with the norm $$\left\Vert u\right\Vert _{\Xi^{B}}:= \sup_{0<\varepsilon \leq 1}\left\|u\left(\frac{x}{\varepsilon}, \frac{y}{\varepsilon}\right)\right\|_{B, 2 B_N}.$$ 

Via Riemann-Lebesgue lemma the above norm is equivalent to 
	$\|u\|_{L^B(Y\times Z)},
	$ thus in the sequel we will consider this one. 
	
For the sake of completeness, we state the following result which proves that the latter norm is controlled by the one defined in \eqref{originalnormLBPer}, thus together with Lemma \ref{lemma2.2}, it provides the eqivalence among the introduced norms in $\mathfrak{X}_{per}^B(\mathbb R^N_y;\mathcal C_b)$.
The proof is postponed in the Appendix.

\begin{proposition}\label{prop2.1}
	It results that
	$\mathfrak{X}_{per}^{B}\left(
	\mathbb R_y^N;\mathcal C_b\right) \subset L_{per}^{B}\left( Y\times Z\right) =%
	\mathfrak{X}_{per}^{B}\left( 
	\mathbb R_y^N\times 
	\mathbb R_z^N\right) $ and $\left\Vert u\right\Vert _{B,Y \times Z }\leq c\left\Vert
	u\right\Vert _{\Xi ^{B}\left(
		\mathbb R_y^N;\mathcal C_b\left( 
		\mathbb R_z^N\right) \right) }$ for all $u\in \mathfrak{X}_{per}^{B}\left( 
	\mathbb R_y^N;\mathcal C_b\right) .$
\end{proposition}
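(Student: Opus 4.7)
The plan is to proceed by density. Since by definition $\mathcal{C}_{per}(Y\times Z)$ is dense in $\mathfrak{X}_{per}^{B}(\mathbb R^N_y;\mathcal C_b)$, it suffices to establish the claimed inequality $\|u\|_{B,Y\times Z} \leq c\,\|u\|_{\Xi^{B}(\mathbb R^N_y;\mathcal C_b(\mathbb R^N_z))}$ for $u\in\mathcal{C}_{per}(Y\times Z)$, and then extend by completeness of $L^{B}_{per}(Y\times Z)$; the same extension delivers the inclusion $\mathfrak{X}_{per}^{B}(\mathbb R^N_y;\mathcal C_b)\subset L^{B}_{per}(Y\times Z)$.

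The heart of the argument is a Riemann--Lebesgue style passage to the limit. For $u\in\mathcal{C}_{per}(Y\times Z)$, the uniform continuity of $u$ on the compact period ensures that $y\mapsto \|u(y,\cdot)\|_\infty$ is continuous and $Y$-periodic, so for any $k>0$ the function $g_k(y):=B\!\left(\|u(y,\cdot)\|_\infty/k\right)$ is continuous and $Y$-periodic. The classical Riemann--Lebesgue lemma for continuous periodic integrands then yields
$$\lim_{\varepsilon\to 0^{+}}\int_{B_N(0,1)} g_k\!\left(\tfrac{x}{\varepsilon}\right)dx \;=\; \mathcal L^N(B_N(0,1))\int_Y g_k(y)\,dy.$$
Pick any $k>\|u\|_{\Xi^{B}(\mathbb R^N_y;\mathcal C_b(\mathbb R^N_z))}$; by the defining formula \eqref{originalnormLBPer} and the monotonicity of $B$, the integral on the left is $\leq 1$ for every $0<\varepsilon\leq 1$, so passing to the limit gives $\int_Y B(\|u(y,\cdot)\|_\infty/k)\,dy \leq 1/\mathcal L^N(B_N(0,1))$.

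Setting $C_1:=\max\{1,\,1/\mathcal L^N(B_N(0,1))\}$ and invoking the sublinearity on $[0,1]$ that follows from the convexity of $B$ with $B(0)=0$ (i.e.\ $B(\lambda t)\leq \lambda B(t)$ for $\lambda\in[0,1]$), the rescaling $k\mapsto C_1 k$ drives the integral below $1$. The pointwise bound $|u(y,z)|\leq \|u(y,\cdot)\|_\infty$ and $\mathcal L^N(Z)=1$ then give $\iint_{Y\times Z} B(|u(y,z)|/(C_1 k))\,dy\,dz \leq 1$, whence $\|u\|_{B,Y\times Z}\leq C_1 k$. Letting $k\downarrow \|u\|_{\Xi^{B}}$ produces the target inequality on $\mathcal{C}_{per}(Y\times Z)$ with $c=C_1$; the density step then transfers both the inequality and the inclusion to all of $\mathfrak{X}_{per}^{B}(\mathbb R^N_y;\mathcal C_b)$.

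The main technical delicacy I expect is the bookkeeping of constants when converting an integral bound of the form $\int B(\cdot/k)\leq 1/\mathcal L^N(B_N(0,1))$ into a genuine Luxemburg-norm bound, since the map $k\mapsto \int B(\cdot/k)$ is not homogeneous. The factor $\mathcal L^N(B_N(0,1))$ may be either smaller or larger than $1$ depending on $N$, and it is the convexity/sublinearity of $B$ that must be invoked to absorb both cases uniformly into a single constant $c$. A secondary but lighter point is the verification that $y\mapsto \|u(y,\cdot)\|_\infty$ is continuous and periodic, which however follows routinely from the uniform continuity of $u$ on the compact cell.
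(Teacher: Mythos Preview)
Your proof is correct and takes a genuinely different route from the paper's. Both arguments reduce by density to $u\in\mathcal C_{per}(Y\times Z)$ and both pass to the limit via a Riemann--Lebesgue lemma, but the quantity to which the limit is applied differs. The paper argues by \emph{duality}: for $u,v\in\mathcal C_{per}(Y\times Z)$ it bounds $\int_{B_N(0,1)}u^\varepsilon v^\varepsilon\,dx$ using H\"older in Orlicz spaces and the trace estimate, obtains $\bigl|\iint_{Y\times Z}uv\,dydz\bigr|\le 2\|v\|_{\widetilde B,Y\times Z}\|u\|_{\Xi^B}$ in the limit, and reads off the $L^B$-norm bound from the dual characterisation. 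Your argument instead works \emph{directly with the modular}: you apply Riemann--Lebesgue to the $Y$-periodic scalar function $g_k(y)=B(\|u(y,\cdot)\|_\infty/k)$, obtain $\int_Y g_k\le 1/\mathcal L^N(B_N(0,1))$, and convert this into a Luxemburg-norm bound via convexity and the pointwise majorisation $|u(y,z)|\le\|u(y,\cdot)\|_\infty$.

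Your approach is more self-contained: it avoids the conjugate ${\rm N}$-function $\widetilde B$, H\"older's inequality, and the implicit control of $\|v^\varepsilon\|_{\widetilde B,B_N(0,1)}$ in the limit that the paper's proof relies on but does not spell out. The price you pay is the explicit handling of the constant $\mathcal L^N(B_N(0,1))$, which you absorb correctly using $B(\lambda t)\le \lambda B(t)$ for $\lambda\in[0,1]$. The paper's duality route, on the other hand, is the more standard template for such norm comparisons and would scale more readily if one wanted an analogous statement in which the $z$-variable is not controlled by a sup-norm.
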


\color{black}

\subsection{Reiterated two-scale convergence in Orlicz spaces}

\bigskip Generalizing definitions in \cite{fotso nnang 2012, nnang reit,
Nnang Ph. D. These}  we introduce
\begin{align*}
L_{per}^{B}\left( \Omega \times Y\times Z\right) =\Big\{ u\in
L_{loc}^{B}\left( \Omega \times 
\mathbb{R}
_{y}^{N}\times \mathbb{R}
_{z}^{N}\right) :\text{for }a.e\text{ }x\in \Omega ,u\left( x,\cdot,\cdot\right) \in
L_{per}^{B}\left( Y\times Z\right)  \\ 
\left. \text{ and }\iiint_{\Omega \times Z}B\left( \left\vert u\left(
x,y,z\right) \right\vert \right) dxdydz<\infty \right\}.
\end{align*}

\noindent We are in position to define reiterated two-scale convergence:

\begin{definition}\label{def3s}
A sequence of functions $\left( u_{\varepsilon }\right) _{\varepsilon}  \subseteq L^{B}\left( \Omega \right) $ is said to be:
\begin{itemize}
\item[-]weakly reiteratively two-scale convergent in $L^{B}\left( \Omega \right) $
to a function $u_{0}\in L_{per}^{B}\left( \Omega \times Y\times
Z\right) $ if 
\begin{equation}
\label{2}
\int_{\Omega }u_{\varepsilon }f^{\varepsilon }dx\rightarrow 
\iiint_{\Omega \times Y\times Z}u_{0}fdxdydz, \hbox{ for all } f\in L^{%
\widetilde{B}}\left( \Omega ;\mathcal{C}_{per}\left( Y\times Z\right) \right),
\end{equation}%
as $\varepsilon \to 0$,
\item[-]strongly reiteratively two-scale convergent in $L^{B}\left( \Omega \right) $\
to $u_{0}\in L_{per}^{B}\left( \Omega \times Y\times Z\right) $\ if for $%
\eta >0$ and $f\in L^{B}\left( \Omega ;\mathcal{C}_{per}\left( Y\times
Z\right) \right) $ verifying $\left\Vert u_{0}-f\right\Vert _{B,\Omega
\times Y\times Z}\leq \frac{\eta }{2}$ there exists $\rho >0$ such that $%
\left\Vert u_{\varepsilon }-f^{\varepsilon }\right\Vert _{B,\Omega }\leq
\eta $ for all $0<\varepsilon \leq \rho.$
\end{itemize}
\end{definition}

When (\ref{2}) happens 
we denote it by "$%
u_{\varepsilon }\rightharpoonup u_{0}$ in $L^{B}\left( \Omega
\right)-$\ weakly reiteratively two-scale " 
and we will say that 
$u_{0}$ is the weak reiterated two-scale limit in $L^{B}\left( \Omega
\right) $ of the sequence $\left( u_{\varepsilon }\right) _{\varepsilon}.$

\begin{remark}
	The above definition extends in a canonical way, arguing in components, to vector valued functions.
	\end{remark}

\begin{lemma}
If $u\in L^{B}\left( \Omega ;\mathcal{C}_{per}\left( Y\times Z\right)
\right) $ then $u^{\varepsilon }\rightharpoonup  $u in $L^{B}\left(
\Omega \right) $ weakly reiteratively two-scale, and we have $\underset{\varepsilon
\rightarrow 0}{\lim }\left\Vert u^{\varepsilon }\right\Vert _{B,\Omega
}=\left\Vert u\right\Vert _{B,\Omega \times Y\times Z}$
\end{lemma}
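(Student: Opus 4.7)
The statement has two parts, (a) the weak reiterated two-scale convergence $u^{\varepsilon}\rightharpoonup u$ and (b) the norm convergence $\|u^{\varepsilon}\|_{B,\Omega}\to\|u\|_{B,\Omega\times Y\times Z}$; both follow from a single ingredient, namely the $L^{1}$ reiterated Riemann--Lebesgue lemma: for every $\varphi\in L^{1}(\Omega;\mathcal{C}_{per}(Y\times Z))$,
\begin{equation*}
\int_{\Omega}\varphi\!\left(x,\frac{x}{\varepsilon},\frac{x}{\varepsilon^{2}}\right)dx\;\longrightarrow\;\iiint_{\Omega\times Y\times Z}\varphi(x,y,z)\,dx\,dy\,dz.
\end{equation*}
This is obtained by first handling $\varphi\in\mathcal{C}(\overline{\Omega};\mathcal{C}_{per}(Y\times Z))$ via a cell-by-cell change of variables on the partitions of $\Omega$ by cubes of sides $\varepsilon$ and $\varepsilon^{2}$ (in the spirit of Lemma~\ref{lemma2.2}), and then extending by density of $\mathcal{C}(\overline{\Omega};\mathcal{C}_{per}(Y\times Z))$ in $L^{1}(\Omega;\mathcal{C}_{per}(Y\times Z))$, as in \cite{nnang reit}.

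For (a), take $f\in L^{\widetilde{B}}(\Omega;\mathcal{C}_{per}(Y\times Z))$ and set $\varphi(x,y,z):=u(x,y,z)f(x,y,z)$. For almost every $x$ the slice $\varphi(x,\cdot,\cdot)$ is a product of $\mathcal{C}_{per}(Y\times Z)$ functions, hence lies in $\mathcal{C}_{per}(Y\times Z)$, and by the Orlicz H\"older inequality applied in $\Omega$ one has $\|\varphi(x)\|_{\infty}\le\|u(x)\|_{\infty}\|f(x)\|_{\infty}\in L^{1}(\Omega)$, since $\|u(\cdot)\|_{\infty}\in L^{B}(\Omega)$ and $\|f(\cdot)\|_{\infty}\in L^{\widetilde{B}}(\Omega)$. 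Hence $\varphi\in L^{1}(\Omega;\mathcal{C}_{per}(Y\times Z))$, and the lemma gives
\begin{equation*}
\int_{\Omega}u^{\varepsilon}f^{\varepsilon}\,dx=\int_{\Omega}\varphi^{\varepsilon}\,dx\longrightarrow\iiint_{\Omega\times Y\times Z}uf\,dx\,dy\,dz,
\end{equation*}
which is exactly \eqref{2}.

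For (b), apply the same lemma to the scalar integrand $\psi_{k}(x,y,z):=B(|u(x,y,z)|/k)$ for each $k>0$: continuity of $B$ yields $\psi_{k}(x,\cdot,\cdot)\in\mathcal{C}_{per}(Y\times Z)$, and $\|\psi_{k}(x)\|_{\infty}=B(\|u(x)\|_{\infty}/k)\in L^{1}(\Omega)$ since $\|u(\cdot)\|_{\infty}\in L^{B}(\Omega)$. Hence for every $k>0$,
\begin{equation*}
\int_{\Omega}B\!\left(\frac{|u^{\varepsilon}(x)|}{k}\right)dx\longrightarrow\iiint_{\Omega\times Y\times Z}B\!\left(\frac{|u(x,y,z)|}{k}\right)dx\,dy\,dz.
\end{equation*}
Set $k_{0}:=\|u\|_{B,\Omega\times Y\times Z}$; the case $k_{0}=0$ is trivial, so assume $k_{0}>0$. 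By dominated convergence and the strict monotonicity of the N-function $B$ on $\{u\neq 0\}$, the map $k\mapsto\iiint B(|u|/k)$ is continuous and strictly decreasing on $(0,+\infty)$, hence $\iiint B(|u|/(k_{0}+\delta))<1<\iiint B(|u|/(k_{0}-\delta))$ for every $\delta\in(0,k_{0})$. Passing to the limit in the previous display, the same strict inequalities hold with $u^{\varepsilon}$ in place of $u$ for all $\varepsilon$ sufficiently small; by the very definition of the Luxemburg norm this gives $k_{0}-\delta<\|u^{\varepsilon}\|_{B,\Omega}\le k_{0}+\delta$, and letting $\delta\to 0^{+}$ concludes.

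The only genuine obstacle is the $L^{1}$ reiterated Riemann--Lebesgue lemma above; once it is at hand, (a) reduces to Orlicz H\"older plus density, and (b) is a standard modular-to-norm argument for the Luxemburg gauge. The most delicate point in (b) is the strict monotonicity at $k_{0}$, which rests on $u\not\equiv 0$ and on $B$ being a genuine N-function (so strictly increasing on $[0,+\infty)$).
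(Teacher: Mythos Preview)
Your proof is correct and follows essentially the same route as the paper: both reduce (a) to the $L^{1}$ reiterated Riemann--Lebesgue lemma applied to $uf\in L^{1}(\Omega;\mathcal{C}_{per}(Y\times Z))$, and both reduce (b) to the same lemma applied to $B(|u|/\delta)\in L^{1}(\Omega;\mathcal{C}_{per}(Y\times Z))$. The paper's own proof is a two-line sketch that stops at ``and the result follows''; you have supplied the modular-to-Luxemburg-norm passage that the paper leaves implicit, and your argument there (continuity and strict monotonicity of $k\mapsto\iiint B(|u|/k)$, hence the value $1$ is attained exactly at $k_{0}$) is sound under the paper's standing assumption $B\in\triangle_{2}$, which guarantees finiteness of the modular for every $k>0$.
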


\begin{proof}
Let $u\in L^{B}\left( \Omega ;\mathcal{C}_{per}\left( Y\times Z\right)
\right) $ and $f\in L^{\widetilde{B}}\left( \Omega ;\mathcal{C}_{per}\left(
Y\times Z\right) \right) $ then $uf\in L^{1}\left( \Omega ;\mathcal{C}%
_{per}\left( Y\times Z\right) \right) $ and  $$\lim_{\varepsilon
\rightarrow 0} \int_{\Omega }u^{\varepsilon }f^{\varepsilon
}dx=\iiint_{\Omega \times Y\times Z}ufdxdydz.$$ Similary $%
\hbox{ for all } \delta >0,B\left( \left|\frac{u}{\delta }\right|\right) \in L^{1}\left( \Omega ;%
\mathcal{C}_{per}\left( Y\times Z\right) \right) $ and the result follows.
\end{proof}


We are in position of proving a first sequential compactness result.

\begin{proposition}\label{propcomp}
Given a bounded sequence $\left(
u_{\varepsilon }\right) _{\varepsilon}\subset L^{B}\left( \Omega
\right) ,$ one can extract a not relabelled subsequence such that $%
\left( u_{\varepsilon }\right) _{\varepsilon}$ is
weakly reiteratively two-scale convergent in $L^{B}\left( \Omega \right) .$
\end{proposition}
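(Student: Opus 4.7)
The plan is to adapt the standard Nguetseng--Allaire two-scale compactness argument to the Orlicz-valued three-scale setting, using the norm estimates already established. Set $\mathcal F := L^{\widetilde B}(\Omega;\mathcal C_{per}(Y \times Z))$ and, for each $\varepsilon > 0$, associate to $u_\varepsilon$ the linear functional $T_\varepsilon \in \mathcal F'$ defined by
\begin{equation*}
T_\varepsilon(f) := \int_\Omega u_\varepsilon(x)\, f^\varepsilon(x)\, dx, \qquad f \in \mathcal F,
\end{equation*}
where $f^\varepsilon = t^\varepsilon(f)$ is the trace of order $\varepsilon$ constructed in Case 3 (applied with $\widetilde B$ in place of $B$, which is legitimate since $\widetilde B$ also satisfies $\triangle_2$). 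Combining the H\"older-type inequality (i) for Orlicz spaces with Lemma \ref{lemma2.2} (stated for $\widetilde B$) gives
\begin{equation*}
|T_\varepsilon(f)| \leq 2\|u_\varepsilon\|_{B,\Omega}\, \|f^\varepsilon\|_{\widetilde B,\Omega} \leq 2C\, \|u_\varepsilon\|_{B,\Omega}\, \|f\|_{\widetilde B,\Omega\times Y\times Z},
\end{equation*}
so the boundedness of $(u_\varepsilon)_\varepsilon$ in $L^B(\Omega)$ yields equiboundedness of $\{T_\varepsilon\}$ in $\mathcal F'$.

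Next I would exploit separability. Since $\widetilde B \in \triangle_2$, the space $L^{\widetilde B}(\Omega)$ is separable, and $\mathcal C_{per}(Y \times Z)$ is separable as well, so $\mathcal F$ is separable. A Banach--Alaoglu argument on a countable dense subset combined with a standard diagonal extraction then produces a not relabelled subsequence $\varepsilon$ and a functional $T \in \mathcal F'$ with $T_\varepsilon(f) \to T(f)$ for every $f \in \mathcal F$.

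It remains to represent $T$ by an element $u_0 \in L^B_{per}(\Omega \times Y \times Z)$, which will then give exactly the convergence \eqref{2}. The uniform bound above shows that $T$ is continuous with respect to the $\|\cdot\|_{\widetilde B,\Omega\times Y\times Z}$ norm on $\mathcal F$; invoking the density of $\mathcal C_{per}(Y\times Z)$ in $L^{\widetilde B}_{per}(Y\times Z)$ (another consequence of $\widetilde B \in \triangle_2$) one extends $T$ by continuity to a bounded linear functional on $L^{\widetilde B}_{per}(\Omega \times Y \times Z)$, and the duality $(L^{\widetilde B}_{per}(\Omega \times Y \times Z))' = L^B_{per}(\Omega \times Y \times Z)$ delivers the desired representative $u_0$. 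The main obstacle I anticipate is precisely this last identification: verifying that $\mathcal F$ is dense in $L^{\widetilde B}_{per}(\Omega\times Y\times Z)$ (or, failing that, running a Hahn--Banach extension and then checking that the representative is genuinely $Y\times Z$-periodic) is the only step that does not reduce to bookkeeping; the remaining pieces of the argument parallel the Sobolev-case treatment in \cite{nnang reit}.
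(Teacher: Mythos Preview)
Your proposal is correct and follows essentially the same route as the paper: define linear functionals $T_\varepsilon$ on the separable space $L^{\widetilde B}(\Omega;\mathcal C_{per}(Y\times Z))$, extract a weak$^*$ limit via Banach--Alaoglu, bound the limit in the $\|\cdot\|_{\widetilde B,\Omega\times Y\times Z}$ norm, extend by density to $L^{\widetilde B}_{per}(\Omega\times Y\times Z)$, and identify $u_0$ via duality. The paper invokes the trace bound \eqref{tracebounds} (rather than Lemma~\ref{lemma2.2} directly) for equiboundedness and then asserts the $\|\cdot\|_{\widetilde B,\Omega\times Y\times Z}$ estimate for $L_0$ without further comment, but the substance is identical; your remark that the density of $L^{\widetilde B}(\Omega;\mathcal C_{per}(Y\times Z))$ in $L^{\widetilde B}_{per}(\Omega\times Y\times Z)$ is the one genuinely non-mechanical step matches exactly what the paper takes for granted.
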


\begin{proof}
For $\varepsilon >0,$ set $L_{\varepsilon }\left( \psi \right)
=\int_{\Omega }u_{\varepsilon }\left( x\right) \psi \left( x,\frac{x}{%
\varepsilon },\frac{x}{\varepsilon ^{2}}\right) dx,\psi \in L^{\widetilde{B}%
}\left( \Omega ;\mathcal{C}_{per}\left( Y\times Z\right) \right) .$  Clearly $%
L_{\varepsilon }$ is a linear form and we have \begin{equation}\left\vert L_{\varepsilon
}\left( \psi \right) \right\vert \leq 2\left\Vert u_{\varepsilon
}\right\Vert _{B,\Omega }\left\Vert \psi ^{\varepsilon }\right\Vert _{%
\widetilde{B},\Omega }\leq c\left\Vert \psi \right\Vert _{L^{\widetilde{B}%
}\left( \Omega ;\mathcal{C}_{per}\left( Y\times Z\right) \right) },\label{Flinear}
\end{equation}
for a constant
$c$ independent on $\varepsilon $\ and $\psi .$ Thus $\left(
L_{\varepsilon }\right) _{\varepsilon}$\ is bounded in $\left[ L^{%
\widetilde{B}}\left( \Omega ;\mathcal{C}_{per}\left( Y\times Z\right)
\right) \right] ^{\prime }$. Since $L^{\widetilde{B}}\left( \Omega ;\mathcal{C%
}_{per}\left( Y\times Z\right) \right) $ is a separable Banach space, we can
extract a not relabelled subsequence, such that, as  $\varepsilon \rightarrow 0,$%
\begin{equation*}
L_{\varepsilon }\rightarrow L_{0},\text{ in }\left[ L^{\widetilde{B}}\left(
\Omega ;\mathcal{C}_{per}\left( Y\times Z\right) \right) \right] ^{\prime }%
\text{ weakly}\ast .
\end{equation*}%
In order to characterize $L_{0}$ note that \eqref{Flinear} ensures
$$\left\vert L_{0}\left(
\psi \right) \right\vert \leq c\left\Vert \psi \right\Vert _{\widetilde{B}%
,\Omega \times Y\times Z} \hbox{ for every }\psi \in L^{\widetilde{B}}\left( \Omega ;\mathcal{C%
}_{per}\left( Y\times Z\right) \right) .
$$ 
Recalling that $L^{\widetilde{B}}\left(
\Omega ;\mathcal{C}_{per}\left( Y\times Z\right) \right) $\ is dense in $%
L_{per}^{\widetilde{B}}\left( \Omega \times Y\times Z\right),$  $L_{0}$ 
can be extended by continuity to an element of $\left[ L_{per}^{\widetilde{B}%
}\left( \Omega \times Y\times Z\right) \right] ^{\prime }\overline{=}%
L_{per}^{B}\left( \Omega \times Y\times Z\right) $. Thus there exist $%
u_{0}\in L_{per}^{B}\left( \Omega \times Y\times Z\right) $ such that 
\begin{equation*}
\lim_{\varepsilon \to 0}\int_{\Omega }u_{\varepsilon }\left( x\right) \psi \left( x,\frac{x}{%
\varepsilon },\frac{x}{\varepsilon ^{2}}\right) dx =
\iiint_{\Omega \times Y\times Z}u_0\left( x,y,z\right) \psi \left(
x,y,z\right) dxdydz, 
\end{equation*}
for all $\psi \in L^{\widetilde{B}}\left( \Omega ;\mathcal{C}%
_{per}\left( Y\times Z\right) \right).$
\end{proof}

The proof of the following results are omitted, since they are consequence of 'standard' density results and are very similar to the (non reiterated) two-scale case (see for instance \cite{fotso nnang 2012}).

\begin{proposition}
If a sequence $\left( u_{\varepsilon }\right) _\varepsilon$ is weakly
reiteratively two-scale convergent in $L^{B}\left( \Omega \right) $
to $u_0\in L_{per}^{B}\left( \Omega \times Y\times Z\right) $ then
\begin{itemize}
\item[(i)]  $u_{\varepsilon }\rightharpoonup\int_{Z}u_0\left(
\cdot,\cdot,z\right) dz$ in $L^{B}\left( \Omega \right) $ weakly two-scale,
and
\item[(ii)]  $u_{\varepsilon }\rightharpoonup \widetilde{u_0}$ in $L^{B}\left(
\Omega \right) $-weakly as $\varepsilon \to 0$ where $\widetilde{u_0}
\left( x\right) =\iint_{Y\times Z}u_0\left( x,\cdot,\cdot\right) dydz.$
\end{itemize}
\end{proposition}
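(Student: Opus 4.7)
The plan is to apply Definition \ref{def3s} to two particularly simple classes of test functions, namely those that fail to depend on $z$ (for part (i)) and those that depend only on $x$ (for part (ii)). Since $L^{\widetilde{B}}(\Omega;\mathcal{C}_{per}(Y))$ and $L^{\widetilde{B}}(\Omega)$ embed canonically into $L^{\widetilde{B}}(\Omega;\mathcal{C}_{per}(Y\times Z))$ (a function independent of $z$ is trivially $Z$-periodic in $z$), this reduces both assertions to one line each.

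For part (i), I would pick an arbitrary test function $g \in L^{\widetilde{B}}(\Omega;\mathcal{C}_{per}(Y))$ and define $\tilde g(x,y,z) := g(x,y)$, which lies in $L^{\widetilde{B}}(\Omega;\mathcal{C}_{per}(Y\times Z))$. By the hypothesis of weak reiterated two-scale convergence applied to $\tilde g$,
\begin{equation*}
\int_\Omega u_\varepsilon(x)\, g\!\left(x,\tfrac{x}{\varepsilon}\right)dx \;=\; \int_\Omega u_\varepsilon\, \tilde g^\varepsilon\, dx \;\longrightarrow\; \iiint_{\Omega\times Y\times Z} u_0(x,y,z)\, g(x,y)\, dx\, dy\, dz.
\end{equation*}
Since $u_0 \in L^B_{per}(\Omega\times Y\times Z)$ and $g \in L^{\widetilde{B}}(\Omega\times Y)$, Fubini's theorem applies and the right hand side equals $\iint_{\Omega\times Y}\bigl(\int_Z u_0(x,y,z)\,dz\bigr)\, g(x,y)\, dx\, dy$. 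This is exactly the weak two-scale convergence $u_\varepsilon \rightharpoonup \int_Z u_0(\cdot,\cdot,z)\, dz$ in $L^B(\Omega)$.

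For part (ii), I would pick $g \in L^{\widetilde{B}}(\Omega)$ and view it as the element $\tilde g(x,y,z) := g(x)$ of $L^{\widetilde{B}}(\Omega;\mathcal{C}_{per}(Y\times Z))$. Applying Definition \ref{def3s} once more,
\begin{equation*}
\int_\Omega u_\varepsilon(x)\, g(x)\, dx \;\longrightarrow\; \iiint_{\Omega\times Y\times Z} u_0(x,y,z)\, g(x)\, dx\, dy\, dz \;=\; \int_\Omega \widetilde{u_0}(x)\, g(x)\, dx,
\end{equation*}
where the last equality is Fubini. Since $g$ was arbitrary in $L^{\widetilde{B}}(\Omega)$, this gives the weak convergence $u_\varepsilon \rightharpoonup \widetilde{u_0}$ in $L^B(\Omega)$.

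There is essentially no substantive obstacle here: both assertions are a direct specialization of the definition to degenerate test functions, and the only care needed is to check that the degenerate functions do sit inside the test-function space (which is immediate from periodicity) and to justify the Fubini step (which follows from $u_0 \in L^B_{per}(\Omega\times Y\times Z)\subset L^1(\Omega\times Y\times Z)$ paired against bounded continuous test functions in $(y,z)$).
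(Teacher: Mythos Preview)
Your argument is correct and is exactly the standard specialization that the paper has in mind; indeed the paper omits the proof, simply noting that it follows from standard arguments analogous to the non-reiterated case. The only small point worth adding (to make the statements match their definitions precisely) is that the limit functions land in the right spaces: $\int_Z u_0(\cdot,\cdot,z)\,dz \in L^B_{per}(\Omega\times Y)$ and $\widetilde{u_0}\in L^B(\Omega)$ both follow immediately from Jensen's inequality applied to the convex ${\rm N}$-function $B$, since $B\bigl(\bigl|\int_Z u_0\,dz\bigr|\bigr)\le \int_Z B(|u_0|)\,dz$.
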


\begin{proposition}\label{prop2.4}
Let	$\mathfrak{X}_{per}^{B,\infty}\left( 
\mathbb R_y^N;\mathcal C_b\right):=\mathfrak{X}_{per}^B\left( 
\mathbb R_y^N;\mathcal C_b\right)\cap L^\infty(\mathbb R^N_y\times \mathbb R^N_z).$
If a sequence $\left( u_{\varepsilon }\right) _{\varepsilon}$ is
weakly reiteratively two-scale convergent in $L^{B}\left( \Omega \right) $
to $u_{0}\in L_{per}^{B}\left( \Omega \times Y\times Z\right) $ we also have 
$\int_{\Omega }u_{\varepsilon }f^{\varepsilon }dx\rightarrow
\iiint_{\Omega \times Y\times Z}u_{0}fdxdydz,$ for all $f\in \mathcal{C}\left( 
\overline{\Omega }\right) \otimes \mathfrak{X}_{per}^{B,\infty}\left( 
\mathbb R_y^N;\mathcal C_b\right) .$
\end{proposition}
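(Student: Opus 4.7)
The plan is a density argument starting from the definition of weak reiterated two-scale convergence. Since every element of $\mathcal C(\overline\Omega)\otimes \mathfrak X_{per}^{B,\infty}(\mathbb R^N_y;\mathcal C_b)$ is a finite sum of elementary tensors, linearity reduces the claim to establishing the limit for a single $f(x,y,z)=\phi(x)\psi(y,z)$ with $\phi\in \mathcal C(\overline\Omega)$ and $\psi\in \mathfrak X_{per}^{B,\infty}$. The boundedness hypothesis $\psi\in L^\infty(\mathbb R^N_y\times \mathbb R^N_z)$ places $\psi$ in $L^{\widetilde B}_{per}(Y\times Z)$, and since $\widetilde B$ satisfies $\triangle_2$, continuous periodic functions are dense there; so I can pick $\psi_n\in \mathcal C_{per}(Y\times Z)$ with $\|\psi-\psi_n\|_{\widetilde B, Y\times Z}\to 0$.

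For each fixed $n$, $\phi\otimes \psi_n$ belongs to $L^{\widetilde B}(\Omega;\mathcal C_{per}(Y\times Z))$, so the hypothesis of weak reiterated two-scale convergence gives $\int_\Omega u_\varepsilon\phi\,\psi_n^\varepsilon\,dx\to \iiint u_0\phi\psi_n\,dxdydz$ as $\varepsilon\to 0$. To pass from $\psi_n$ to $\psi$ uniformly in $\varepsilon$, I would invoke the $\widetilde B$-analogue of Lemma \ref{lemma2.2}: its proof uses only periodicity, the $N$-function properties and a covering of the domain by $\varepsilon$- and $\varepsilon^2$-cells, so it transfers verbatim with $\widetilde B$ in place of $B$ and yields a constant $C$, independent of $\varepsilon$, such that $\|v^\varepsilon\|_{\widetilde B,\Omega}\leq C\|v\|_{\widetilde B, Y\times Z}$ for every $v\in L^{\widetilde B}_{per}(Y\times Z)$ and every sufficiently small $\varepsilon$. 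Combining this with the generalized H\"older inequality and the uniform $L^B(\Omega)$ bound on $u_\varepsilon$ (an immediate consequence of the weak reiterated two-scale assumption via Banach--Steinhaus, testing against functions independent of $(y,z)$), I obtain
$$\left|\int_\Omega u_\varepsilon \phi\,(\psi^\varepsilon-\psi_n^\varepsilon)\,dx\right|\leq 2\|u_\varepsilon\|_{B,\Omega}\|\phi\|_\infty \|(\psi-\psi_n)^\varepsilon\|_{\widetilde B,\Omega}\leq C'\|\phi\|_\infty\|\psi-\psi_n\|_{\widetilde B,Y\times Z},$$
which tends to $0$ as $n\to\infty$, uniformly in $\varepsilon$. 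The corresponding limit-side error $\iiint u_0\phi(\psi-\psi_n)\,dxdydz$ vanishes by H\"older in $L^B(\Omega\times Y\times Z)\times L^{\widetilde B}(\Omega\times Y\times Z)$, and a standard diagonal argument (first $\varepsilon\to 0$, then $n\to\infty$) closes the proof.

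The main obstacle I anticipate is the uniform-in-$\varepsilon$ control of $\|(\psi-\psi_n)^\varepsilon\|_{\widetilde B,\Omega}$. Convergence of $\psi_n\to\psi$ in the $\Xi^B$ norm, which is directly available from the definition of $\mathfrak X_{per}^B$, is not enough: one needs a norm \emph{dual} to the $L^B(\Omega)$ setting in which $u_\varepsilon$ lives. It is precisely the extra hypothesis $\psi\in L^\infty$ that places $\psi$ in $L^{\widetilde B}_{per}(Y\times Z)$ and enables density of $\mathcal C_{per}(Y\times Z)$ there, so that the $\widetilde B$-version of Lemma \ref{lemma2.2} --- rather than the $B$-version proved in the text --- can be applied to close the argument.
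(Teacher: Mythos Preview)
Your approach is correct and matches the ``standard density'' argument the paper alludes to (the paper omits the proof entirely, referring to the non-reiterated case in \cite{fotso nnang 2012}). The reduction to elementary tensors, approximation of $\psi$ by $\psi_n\in\mathcal C_{per}(Y\times Z)$, use of the definition of weak reiterated two-scale convergence for each $n$, and the uniform-in-$\varepsilon$ error control via a $\widetilde B$-version of Lemma~\ref{lemma2.2} together with the H\"older inequality are exactly the right ingredients.

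One point deserves sharpening. You write that the $\widetilde B$-analogue of Lemma~\ref{lemma2.2} yields $\|v^\varepsilon\|_{\widetilde B,\Omega}\le C\|v\|_{\widetilde B,Y\times Z}$ \emph{for every} $v\in L^{\widetilde B}_{per}(Y\times Z)$. That is too broad: for a generic $v\in L^{\widetilde B}_{per}(Y\times Z)$ the trace $v(x/\varepsilon,x/\varepsilon^2)$ is not well defined, since it amounts to evaluating on a null set of $(y,z)$. What saves your argument is that you only apply this to $v=\psi-\psi_n$, and $\psi\in\mathfrak X_{per}^{B}\subset L^B_{loc}(\mathbb R^N_y;\mathcal C_b(\mathbb R^N_z))$, so for a.e.\ $y$ the map $z\mapsto\psi(y,z)$ is continuous; hence the pointwise trace $(\psi-\psi_n)(x/\varepsilon,x/\varepsilon^2)$ is defined for a.e.\ $x$, it is bounded by $\|\psi\|_\infty+\|\psi_n\|_\infty$, and the covering/change-of-variables computation in the proof of Lemma~\ref{lemma2.2} goes through verbatim with $\widetilde B$ in place of $B$. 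Alternatively, one may choose the approximants $\psi_n$ converging to $\psi$ in the $\Xi^B$ norm (this is the very definition of $\mathfrak X_{per}^B$), truncate them at level $\|\psi\|_\infty$ to keep a uniform $L^\infty$ bound, and then deduce $L^{\widetilde B}(Y\times Z)$ convergence from $L^B(Y\times Z)$ convergence plus dominated convergence; with this choice the trace consistency and the $\widetilde B$-estimate are immediate on continuous functions and pass to the limit.
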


\begin{corollary}\label{corollary2.1}
Let $v\in \mathcal{C}\left( \overline{\Omega };\mathfrak{X}_{per}^{B,\infty
}(\mathbb R_y^N;\mathcal C_b) \right).$ Then $v^{\varepsilon } \rightharpoonup v$ in $L^{B}\left( \Omega \right) $- weakly reiteratively two-scale as $\varepsilon
\rightarrow 0.$
\end{corollary}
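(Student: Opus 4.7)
The plan is to argue by approximation and uniqueness. Since $v \in \mathcal{C}(\overline{\Omega}; \mathfrak{X}^{B,\infty}_{per}(\mathbb R_y^N;\mathcal C_b))$ takes values in $L^\infty(\mathbb R^N_y\times\mathbb R^N_z)$ and $\overline{\Omega}$ is compact, the map $x\mapsto \|v(x,\cdot,\cdot)\|_\infty$ is bounded on $\overline\Omega$; hence $v^\varepsilon$ is uniformly bounded in $L^\infty(\Omega)\subset L^B(\Omega)$. By Proposition~\ref{propcomp} I can pass to a not relabelled subsequence so that $v^\varepsilon\rightharpoonup v_0$ weakly reiteratively two-scale in $L^B(\Omega)$ for some $v_0\in L^B_{per}(\Omega\times Y\times Z)$; by uniqueness of the reiterated two-scale limit, once the identification $v_0=v$ is established, the entire sequence will converge and the proof will be complete.

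To identify $v_0$, I would exploit the density (built into its very definition) of $\mathcal C_{per}(Y\times Z)$ in $\mathfrak X^{B}_{per}(\mathbb R^N_y; \mathcal C_b)$. For each $\eta>0$, a partition of unity on the compact set $\overline\Omega$ combined with this density applied at the base points produces $v_n \in \mathcal{C}(\overline\Omega; \mathcal{C}_{per}(Y\times Z))$ with $\sup_{x \in \overline\Omega}\|v(x,\cdot,\cdot)-v_n(x,\cdot,\cdot)\|_{\Xi^B}\leq \eta$. Given any test function $\varphi \in L^{\widetilde B}(\Omega; \mathcal C_{per}(Y\times Z))$ I would split
\[
\int_\Omega v^\varepsilon\varphi^\varepsilon\, dx = \int_\Omega v_n^\varepsilon\varphi^\varepsilon\, dx + \int_\Omega(v-v_n)^\varepsilon \varphi^\varepsilon\, dx.
\]
For the first term, since $v_n\in \mathcal C(\overline\Omega;\mathcal C_{per}(Y\times Z))\subset L^B(\Omega; \mathcal C_{per}(Y\times Z))$, the unnumbered lemma preceding Proposition~\ref{propcomp} ensures that it converges to $\iiint_{\Omega\times Y\times Z} v_n\varphi\,dxdydz$ as $\varepsilon\to 0$, and this in turn converges to $\iiint_{\Omega\times Y\times Z} v\varphi\,dxdydz$ as $\eta\to 0$. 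The second term is estimated via the Orlicz H\"older inequality and Lemma~\ref{lemma2.2} by
\[
\left|\int_\Omega(v-v_n)^\varepsilon\varphi^\varepsilon\, dx\right|\le 2\,\|(v-v_n)^\varepsilon\|_{B,\Omega}\,\|\varphi^\varepsilon\|_{\widetilde B,\Omega}\le C\,\eta\,\|\varphi\|_{L^{\widetilde B}(\Omega;\mathcal C_{per}(Y\times Z))},
\]
uniformly in $\varepsilon\in(0,1]$. A standard $\eta$-$\varepsilon$ argument then yields $\int_\Omega v^\varepsilon\varphi^\varepsilon\,dx\to\iiint_{\Omega\times Y\times Z} v\varphi\,dxdydz$, which identifies $v_0=v$.

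The main obstacle I anticipate lies in justifying the uniform-in-$\varepsilon$ bound $\|(v-v_n)^\varepsilon\|_{B,\Omega}\le C\,\eta$, since Lemma~\ref{lemma2.2} is stated only for functions independent of $x$. I plan to handle this either by first approximating $v-v_n$ by a finite sum of tensor products $\sum_i a_i(x)b_i(y,z)$ with $a_i\in\mathcal C(\overline\Omega)$ and $b_i\in\mathfrak X^{B,\infty}_{per}$, so that Lemma~\ref{lemma2.2} applies directly to each factor $b_i$, or by freezing $x$ on a partition of $\overline\Omega$ at a scale much coarser than $\varepsilon$ and invoking the uniform continuity of $x\mapsto (v-v_n)(x,\cdot,\cdot)$ together with a per-piece application of Lemma~\ref{lemma2.2}.
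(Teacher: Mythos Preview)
Your proposal is correct and is precisely the kind of argument the paper has in mind: the paper omits the proof of Corollary~\ref{corollary2.1} altogether, saying only that it (together with Proposition~\ref{prop2.4}) is a ``consequence of `standard' density results'' analogous to the non-reiterated case in \cite{fotso nnang 2012}. Your tensor-product workaround for the obstacle is in fact how the paper organizes matters---Proposition~\ref{prop2.4} treats exactly the class $\mathcal C(\overline\Omega)\otimes\mathfrak X^{B,\infty}_{per}$, and Corollary~\ref{corollary2.1} then follows by density of this tensor product in $\mathcal C(\overline\Omega;\mathfrak X^{B,\infty}_{per})$; so your first proposed fix is essentially the paper's route.
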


\begin{remark}
	\begin{itemize}
\item[(1)] If $v\in L^{B}\left( \Omega ;\mathcal{C}_{per}\left( Y\times Z\right)
\right) ,$ then $v^{\varepsilon }\rightarrow v$ in $L^{B}\left( \Omega
\right) $-strongly reiteratively two-scale as $\varepsilon \rightarrow 0.$

\item[(2)] If $\left( u_{\varepsilon }\right) _{\varepsilon}\subset
L^{B}\left( \Omega \right) $ is strongly reiteratively two-scale convergent in $
L^{B}\left( \Omega \right) $\ to $u_{0}\in L_{per}^{B}\left( \Omega \times
Y\times Z\right)$ then 
\begin{itemize}
\item[(i)]  $u_{\varepsilon }\rightharpoonup u_{0}$ in $L^{B}\left(
\Omega \right) $ weakly reiteratively two-scale as $\varepsilon \rightarrow 0;$
\item[(ii)] $\left\Vert u_{\varepsilon }\right\Vert _{B,\Omega }\rightarrow
\left\Vert u_{0}\right\Vert _{B,\Omega \times Y\times Z}$ as $
\varepsilon \rightarrow 0.$
\end{itemize}
\end{itemize}
\end{remark}

The following result is crucial to provide a notion of weakly reiterated two-scale convergence in Orlicz-Sobolev spaces and for the sequential compactness result on $
W^{1}L^{B}\left( \Omega \right).$ It extends and presents an alternative proof of \cite[Theorem 4.1]{fotso nnang 2012}.

To this end, recall first that $L_{per}^1\left(
Y;W_{\#}^{1}L^{B}\left( Z\right) \right)$ denotes the space of functions $u \in L^1_{per}(Y\times Z)$, such that $u(y,\cdot) \in W_{\#}^{1}L^{B}\left( Z\right) $, for a.e. $y \in Y$. 

\begin{proposition}\label{mainprop3s}
Let $\Omega $ be a bounded open set in $
\mathbb R_x^N$, and $\left( u_{\varepsilon
}\right) _{\varepsilon}$ bounded in $W^{1}L^{B}\left( \Omega \right).$ 
There exist a not relabelled subsequence, $u_{0}\in W^{1}L^{B}\left(
\Omega \right),$ $\left(u_{1},u_{2}\right) \in L^{1}\left(
\Omega ;W_{\#}^{1}L^{B}\left( Y\right) \right) \times L^{1}\left(
\Omega ;L^1_{per}\left( Y;W_{\#}^{1}L^{B}\left( Z\right) \right)
\right) $ such that:
\begin{itemize}
\item[(i)] $u^{\varepsilon }\rightharpoonup u_{0}$ weakly reiteratively two-scale in $L^{B}\left(
\Omega \right) $,
\item[(ii)] $D_{x_{i}}u^{\varepsilon }\rightharpoonup
D_{x_{i}}u_{0}+D_{y_{i}}u_{1}+D_{z_{i}}u_{2}$ weakly reiteratively two-scale in $L^{B}\left( \Omega\\
\right) $, $1\leq i\leq N$,
\end{itemize}
as $\varepsilon \to 0$.
\end{proposition}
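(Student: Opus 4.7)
I follow the $L^p$ approach of \cite[Section 4]{nnang reit}, recast in the Orlicz-Sobolev setting with the compactness tools developed in this section.

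\emph{Extraction.} Reflexivity of $W^{1}L^{B}(\Omega)$ produces, up to a non-relabelled subsequence, $u_\varepsilon \rightharpoonup u_0$ weakly in $W^{1}L^{B}(\Omega)$. Boundedness of $(u_\varepsilon)$ and of each $(D_{x_i}u_\varepsilon)$ in $L^{B}(\Omega)$ combined with Proposition \ref{propcomp}, applied diagonally, produces a further subsequence along which $u_\varepsilon \rightharpoonup v_0$ and $D_{x_i}u_\varepsilon \rightharpoonup \chi^i$ weakly reiteratively two-scale in $L^{B}(\Omega)$, for some $v_0,\chi^i \in L^{B}_{per}(\Omega\times Y\times Z)$.

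\emph{Identification $v_0=u_0$.} Given $\varphi \in \mathcal D(\Omega)$ and $\eta \in \mathcal C^\infty_{per}(Y\times Z)$, integration by parts in $x$ yields
\[
\int_\Omega D_{x_i}u_\varepsilon\, \varphi\, \eta^\varepsilon\, dx = -\int_\Omega u_\varepsilon \bigl[\partial_{x_i}\varphi\,\eta^\varepsilon+\varepsilon^{-1}\varphi(\partial_{y_i}\eta)^\varepsilon+\varepsilon^{-2}\varphi(\partial_{z_i}\eta)^\varepsilon\bigr]dx.
\]
Multiplying by $\varepsilon^{2}$ with $\eta=\eta(z)$ forces $\iiint v_0\varphi\,\partial_{z_i}\eta\,dxdydz=0$; multiplying by $\varepsilon$ with $\eta=\eta(y)$ forces $\iiint v_0\varphi\,\partial_{y_i}\eta\,dxdydz=0$. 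Hence $v_0$ is independent of $(y,z)$, and comparing the weak $L^{B}$-limit of $(u_\varepsilon)$ with its reiterated two-scale limit against test functions in $\mathcal D(\Omega)$ gives $v_0=u_0$, proving (i).

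\emph{Identification of $\chi$.} For every $\psi \in [\mathcal D(\Omega)\otimes \mathcal C^\infty_{per}(Y\times Z)]^N$ with $\mathrm{div}_y\psi=\mathrm{div}_z\psi=0$, the integration-by-parts identity $\sum_i \int_\Omega D_{x_i}u_\varepsilon\, \psi_i^\varepsilon\,dx = -\int_\Omega u_\varepsilon (\mathrm{div}_x\psi)^\varepsilon\,dx$ (the $\varepsilon^{-1},\varepsilon^{-2}$ terms vanish by the solenoidal assumptions) passes to the limit as
\[
\iiint_{\Omega\times Y\times Z}(\chi-Du_0)\cdot\psi\,dx\,dy\,dz=0.
\]
Restricting to $\psi=\psi(x,y)$ and pinning the $y$-average via constant-in-$y$ test vectors, the Orlicz periodic Helmholtz decomposition on $Y$, applied fiberwise in $x$, produces $u_1 \in L^{1}(\Omega; W^{1}_{\#}L^{B}(Y))$ with $\int_Z \chi(\cdot,\cdot,z)\,dz = Du_0+D_y u_1$. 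Separately, the curl-free identity for gradients
\[
\int_\Omega \bigl(D_{x_j}u_\varepsilon\, \partial_{x_i}\phi^\varepsilon - D_{x_i}u_\varepsilon\, \partial_{x_j}\phi^\varepsilon\bigr)dx=0,\qquad \phi \in \mathcal D(\Omega)\otimes \mathcal C^\infty_{per}(Y\times Z),
\]
combined with $\partial_{x_i}\phi^\varepsilon=(\partial_{x_i}\phi)^\varepsilon+\varepsilon^{-1}(\partial_{y_i}\phi)^\varepsilon+\varepsilon^{-2}(\partial_{z_i}\phi)^\varepsilon$ and the extraction of the leading $\varepsilon^{-2}$ contribution (after multiplication by $\varepsilon^{2}$), yields in the limit $\iiint (\chi^j\partial_{z_i}\phi-\chi^i\partial_{z_j}\phi)\,dx\,dy\,dz=0$, i.e.\ $\partial_{z_i}\chi^j=\partial_{z_j}\chi^i$ distributionally on $\Omega\times Y\times Z$. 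Thus the residual $\tilde\chi := \chi - Du_0 - D_y u_1$ is $z$-curl-free, has mean zero in $z$, and is $(y,z)$-periodic; the periodic Orlicz Poincar\'e lemma on $Z$, applied fiberwise in $(x,y)$, provides $u_2 \in L^{1}(\Omega; L^{1}_{per}(Y; W^{1}_{\#}L^{B}(Z)))$ with $\tilde\chi = D_z u_2$. Together one obtains $\chi^i = D_{x_i}u_0 + D_{y_i}u_1 + D_{z_i}u_2$, which is (ii).

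\emph{Main obstacle.} The principal technical point is the transfer to the Orlicz-Sobolev setting of the periodic Helmholtz/Poincar\'e decomposition, used once on $Y$ and once on $Z$, together with the measurability in the slow (and intermediate) variables required to place the correctors $u_1, u_2$ in the announced Bochner-type target spaces.
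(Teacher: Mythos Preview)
Your approach is correct in outline but takes a genuinely different route from the paper. You work directly in the Orlicz setting: after extracting reiterated two-scale limits, you identify $v_0=u_0$ by the standard scaling argument, and then recover $u_1$ and $u_2$ by testing against solenoidal fields and invoking a periodic Helmholtz decomposition on $Y$ (for $u_1$) together with a $z$-curl-free/Poincar\'e lemma on $Z$ (for $u_2$). This is the natural Orlicz analogue of the Allaire--Briane argument.

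The paper proceeds instead by an embedding/bootstrap trick. Since $B,\widetilde B\in\triangle_2$, one has $L^q\hookrightarrow L^B\hookrightarrow L^p$ for some $1<p<q$; the bounded sequence is therefore bounded in $W^{1,p}$, and the \emph{known} $L^p$ reiterated two-scale compactness (Allaire--Briane, Fonseca--Zappale) already delivers the structure $D_{x_i}u_0+D_{y_i}u_1+D_{z_i}u_2$ with $u_1,u_2$ in the $L^p$ target spaces. Because $L^{p'}\subset L^{\widetilde B}$, the $L^p$ and $L^B$ reiterated two-scale limits coincide, and the remaining work is to upgrade the regularity of $u_1,u_2$ from $W^{1,p}_{\#}$ to $W^{1}_{\#}L^B$, which is done via Jensen's inequality and the Poincar\'e--Wirtinger inequality in $L^B$.

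What each approach buys: the paper's route is short and completely sidesteps your ``main obstacle'' --- it never needs an Orlicz Helmholtz decomposition or an Orlicz Poincar\'e lemma on the torus, nor the attendant fiberwise-measurability bookkeeping, because the $L^p$ theory hands over the correctors ready-made. Your route is more self-contained and would yield a proof independent of the $L^p$ literature, but the obstacle you flag is genuine: the periodic Helmholtz decomposition in $L^B_{per}$ (equivalently, boundedness of the periodic Leray projector on $L^B$) and the joint measurability of the fiberwise correctors are not established anywhere in the paper and would have to be supplied, e.g.\ via Calder\'on--Zygmund theory on Orlicz spaces under the $\triangle_2$ hypothesis on both $B$ and $\widetilde B$.
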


\begin{corollary}
	If $\left( u_{\varepsilon }\right) _{\varepsilon}$ is such that$\ \
	u_{\varepsilon }\rightharpoonup v_{0}$ weakly reiteratively two-scale in $W^{1}L^{B}\left( \Omega \right)$,  we have:
	\begin{itemize}
		\item[(i)] 	$u_{\varepsilon }\rightharpoonup
		\int_{Z}v_0\left( \cdot ,\cdot ,z\right) dz$ weakly two-scale in $W^{1}L^{B}\left( \Omega \right) $, 
		\item[(ii)] $u_{\varepsilon }\rightharpoonup 
		\widetilde{v_0}$ in $W^{1}L^{B}\left( \Omega \right) $-weakly, where $\widetilde{v_0}\left( x\right) =
		\iint_{Y\times Z}v_0\left( x,\cdot ,\cdot\right) dydz.$ 
	\end{itemize}  
\end{corollary}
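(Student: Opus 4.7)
The plan is to deduce the corollary by applying the $L^B$ averaging proposition (the unlabeled Proposition stated immediately after Proposition \ref{propcomp}) separately to $u_\varepsilon$ and to $Du_\varepsilon$, and then assembling the two convergences at the level of $W^1L^B(\Omega)$. No genuinely new analytical input is needed; the corollary is essentially the Sobolev-level transcription of the $L^B$ result, with the extra bookkeeping required to handle the gradient decomposition coming from Proposition \ref{mainprop3s}.

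First, I will unpack the hypothesis. By Proposition \ref{mainprop3s} applied to $(u_\varepsilon)_\varepsilon$, the weak reiterated two-scale convergence in $W^1L^B(\Omega)$ encodes the existence of $u_0 \in W^1L^B(\Omega)$, $u_1 \in L^1(\Omega;W^1_\# L^B(Y))$ and $u_2 \in L^1(\Omega; L^1_{per}(Y;W^1_\# L^B(Z)))$ such that $u_\varepsilon \rightharpoonup u_0$ weakly reiteratively two-scale in $L^B(\Omega)$ and $Du_\varepsilon \rightharpoonup Du_0 + D_y u_1 + D_z u_2$ weakly reiteratively two-scale in $L^B(\Omega)^N$. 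I will therefore identify the limit $v_0$ with this profile, whose reiterated two-scale gradient is $Du_0 + D_y u_1 + D_z u_2$.

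For part (i) I will apply the first assertion of the $L^B$ averaging proposition to each of the above two convergences. For $u_\varepsilon$, since $u_0$ is independent of $(y,z)$ and $|Z|=1$, this yields $u_\varepsilon \rightharpoonup u_0$ weakly two-scale in $L^B(\Omega)$. For $Du_\varepsilon$, it yields the weak two-scale limit $\int_Z (Du_0+D_y u_1+D_z u_2)\,dz = Du_0 + D_y u_1$, where the $D_z u_2$ term vanishes because $u_2(x,y,\cdot)$ is $Z$-periodic with zero mean. Together, these two statements are precisely the weak two-scale convergence $u_\varepsilon \rightharpoonup \int_Z v_0(\cdot,\cdot,z)\,dz$ in $W^1L^B(\Omega)$. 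For part (ii) I will similarly apply the second assertion of the same averaging proposition: for $u_\varepsilon$ one obtains $\widetilde{u_0}(x) = \iint_{Y\times Z} u_0(x)\,dy\,dz = u_0(x)$, and for $Du_\varepsilon$ the double integral $\iint_{Y\times Z}(Du_0 + D_y u_1 + D_z u_2)\,dy\,dz$ collapses to $Du_0$, because $u_1(x,\cdot)\in W^1_\# L^B(Y)$ and $u_2(x,y,\cdot)\in W^1_\# L^B(Z)$ have vanishing gradient averages over their respective periods. Hence $u_\varepsilon \rightharpoonup u_0 = \widetilde{v_0}$ weakly in $W^1L^B(\Omega)$.

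The only delicate point is justifying that the $L^B$-gradient averages of $Z$-periodic (respectively $Y$-periodic) zero-mean Orlicz--Sobolev functions vanish; this follows by density from the classical periodic case, or directly by writing $\int_Z D_z u_2(x,y,z)\,dz$ as the boundary contribution of the $Z$-period of $u_2$ and invoking the embedding $L^B \hookrightarrow L^1$ (property (iii) of the Orlicz recap) to make this rigorous in the Orlicz framework. Everything else is a direct transcription of previously established results.
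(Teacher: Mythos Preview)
Your proposal is correct and follows exactly the route the paper intends: the corollary is stated in the paper without proof, immediately after the $L^B$ averaging proposition and Proposition \ref{mainprop3s}, as their direct transcription at the Sobolev level. Your argument---apply the averaging proposition componentwise to $u_\varepsilon$ and to $Du_\varepsilon$, then observe that the periodic zero-mean terms $D_y u_1$ and $D_z u_2$ drop out under the relevant averages---is precisely the bookkeeping the paper leaves implicit.
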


\begin{proof}[Proof of Proposition \ref{mainprop3s}]
We recall that : $L^{B}\left( \Omega _{1}\times \Omega _{2}\right)
\subset L^{1}\left( \Omega _{1};L^{B}\left( \Omega _{2}\right) \right) .$
Moreover since $B$ satisfies $\triangle _{2},$ there exist $q>p>1$ such that: $%
L^{q}\left( \Omega \right) \hookrightarrow L^{B}\left( \Omega \right)
\hookrightarrow L^{p}\left( \Omega \right)$,
(relying on \cite[Proposition 2.4]{DG} (see also \cite[Proposition 3.5]{C} ) and a standard argument based on decreasing rearrangements), where the arrows stand for continuous embedding.

Let $\left( u_{\varepsilon }\right)_\varepsilon $ be bounded in $L^{B}\left( \Omega
\right) .$ Then it is bounded in $L^{p}\left( \Omega \right) $ and we have:

\begin{itemize}
	\item[(i)] $u_{\varepsilon }\rightharpoonup U_{0}$ weakly reiteratively two-scale in $L^{B}\left(
\Omega \right)$,
\item[(ii)] $u_{\varepsilon }\rightharpoonup u_{0}$ in $W^{1}L^{B}\left( \Omega \right) $,
\item[(i)'] $u_{\varepsilon }\rightharpoonup 
U_{0}^{\prime }$ weakly reiteratively two-scale in $L^{p}\left( \Omega \right),
$
\item[(ii)'] $u_{\varepsilon }\rightharpoonup u_{0}^{^{\prime }}$ in $%
W^{1,p}\left( \Omega \right) $.
\end{itemize}

By classical results (see for instance \cite{All2} and \cite{Elvira 1}), we know that
$$u_0'=U'_0,$$
on the other hand, using $W^{1,p}\left( \Omega \right) $-weak$\hookrightarrow 
\mathcal{D}^{\prime }\left( \Omega \right) -$weak  and $%
W^{1}L^{B}\left( \Omega \right) $-weak$\hookrightarrow \mathcal{D}^{\prime
}\left( \Omega \right) -$weak, we  deduce that $u_{0}^{\prime }=u_{0}\in
W^{1}L^{B}\left( \Omega \right). $
Moreover, since  $L^{p'}(\Omega)\hookrightarrow L^{\tilde B}(\Omega)$, it results then  $L^{p^{\prime }}\left( \Omega ;\mathcal{C}_{per}\left(
Y\times Z\right) \right) $ $\subset L^{\widetilde{B}}\left( \Omega ;\mathcal{%
	C}_{per}\left( Y\times Z\right) \right) $, thus
$$
U_0=U_0',
$$
thus
$$
U_0=U_0'=u_0=u_0'.
$$

We also have
\begin{itemize}
\item[(iii)] $D_{x_{i}}u_{\varepsilon }\rightharpoonup \tilde w$ weakly reiteratively two-scale in $%
L^{B}\left( \Omega \right) $, $1\leq i\leq N$,
\item[(iii)'] $D_{x_{i}}u_{\varepsilon }\rightharpoonup
D_{x_{i}}u_{0}+D_{y_{i}}u_{1}+D_{z_{i}}u_{2}$ weakly reiteratively two-scale in $L^{p}\left( \Omega
\right) $, $1\leq i\leq N$, with $\left( u_{1},u_{2}\right) \in L^{p}_{per}\left( \Omega
;W_{\#}^{1,p}\left( Y\right) \right) \times L^{p}\left( \Omega
;L^{p}_{per}\left( Y;W_{\#}^{1,p}\left( Z\right) \right) \right)$ (see \cite{All2} and \cite{Elvira 1}).
\end{itemize}
Arguing in components, as done above, we are lead to conclude that

$$\tilde w= D_{x_{i}}u_{0}+D_{y_{i}}u_{1}+D_{z_{i}}u_{2}\in L_{per}^{B}\left( \Omega
\times Y\times Z\right) $$ and $D_{x_{i}}u_{0}\in L^{B}\left( \Omega \right)
\subset L_{per}^{B}\left( \Omega \times Y\times Z\right),$ as $u_{0}\in
W^{1}L^{B}\left( \Omega \right) .$ Therefore $%
\tilde w-D_{x_{i}}u_{0}=D_{y_{i}}u_{1}+D_{z_{i}}u_{2}\in L_{per}^{B}\left( \Omega
\times Y\times Z\right) $. 
By Jensen's inequality, $B\left(
\int_{Z}\left\vert \tilde w\right\vert dz\right) \leq \left( \int_{Z}B\left(
\left\vert \tilde w\right\vert \right) dz\right) $ then
$$\iint_{\Omega \times Y}B\left( \int_{Z}\left\vert \tilde w\right\vert dz\right)
dxdy\leq \iint_{\Omega \times Y}\int_{Z}B\left( \left\vert \tilde w \right\vert
\right) dzdxdy<\infty.
$$

 Since $B$ satisfies $ \triangle _{2}$, $
\int_{Z}\tilde w dz=D_{x_{i}}u_{0}+D_{y_{i}}u_{1}\in L_{per}^{B}\left( \Omega \times
Y\right) $ with $D_{x_{i}}u_{0}\in L^{B}\left( \Omega \right) \subset
L_{per}^{B}\left( \Omega \times Y\right)$.
Therefore $%
\int_Z \tilde w dz-D_{x_{i}}u_{0}=D_{y_{i}}u_{1}\in L_{per}^{B}\left( \Omega \times Y\right)
\subset L^{1}\left( \Omega ;L^{B}_{per}\left( Y\right) \right) $.
On the other hand
$u_{1}\in L^{p}_{per}\left( \Omega
;W_{\#}^{1,p}\left( Y\right) \right) $, i.e.  for almost all $%
x,u_{1}\left( x,\cdot\right) \in W_{\#}^{1,p}\left( Y\right) $ $%
=\left\{ v\in W^{1,p}_{per}\left( Y\right) :\int_{Y}vdy=0\right\} $ and 
$D_{y_{i}}u_{1}\left( x,\cdot\right) \in L^{B}_{per}\left( Y\right)$. 
In particular $u_{1}\left( x,\cdot\right) \in L_{per}^{p}\left( Y\right)
\subset L_{per}^{1}\left( Y\right) $.

\noindent To complete the proof it remains to show that every $v\in L^{p}\left( Y\right)$ with $D_{y_{i}}v\in L^{B}_{per}\left( Y\right)$ 
is in $L^{B}_{per}\left( Y\right) .$

Set $u=u-M\left( u\right) +M\left( u\right),$ where $M$ is the averaging operator in \eqref{M}. Then, by Poincar\'e inequality, it results%
\[
\begin{tabular}{l}
$\left\Vert u\right\Vert _{B,Y}\leq \left\Vert u-M\left( u\right)
\right\Vert _{B,Y}+\left\Vert M\left( u\right) \right\Vert _{B,Y}\leq
c\left\Vert Du\right\Vert _{B,Y}+\left\Vert M\left( u\right) \right\Vert
_{B,Y}\leq $ \\ 
$c\left\Vert Du\right\Vert _{B,Y}+c_{1}\left( 1+\left\Vert u\right\Vert
_{L^{1}\left( Y\right) }\right) <\infty .$%
\end{tabular}%
\]%
The last inequality being consequence of the fact that $\underset{t\rightarrow 0}{\lim }B\left( t\right) =0,\exists
c_{1}>0,B\left( \frac{1}{c_{1}}\right) <1.$  Hence,

\noindent $\int_{Y}B\left( \frac{%
	\left\vert M\left( u\right) \right\vert }{\left( 1+\left\vert M\left(
	u\right) \right\vert \right) c_{1}}\right) dy\leq $ $\int_{Y}B\left( \frac{1%
}{c_{1}}\right) dy\leq 1;$ that is $\left\Vert M\left( u\right) \right\Vert
_{B,Y}\leq \left( 1+\left\vert M\left( u\right) \right\vert \right)
c_{1}=\left( 1+\left\vert \int_{Y}udy\right\vert \right) c_{1}\leq
c_{1}\left( 1+\left\Vert u\right\Vert _{L^{1}\left( Y\right) }\right).$

Thus we can conclude that $u_{1}\in L^{1}_{per}\left( \Omega ;W_{\#}^{1}L^{B}\left(
Y\right) \right) .$

For what concerns $u_2$ we can argue in a similar way. Recall that
\begin{align*}
\tilde w =D_{x_{i}}u_{0}+D_{y_{i}}u_{1}+D_{z_{i}}u_{2}\in
L_{per}^{B}\left( \Omega \times Y\times Z\right) ,D_{x_{i}}u_{0}\in
L^{B}\left( \Omega \right) , \\ 
u_{1}\in L^{1}\left( \Omega ;W_{\#}^{1}L^{B}\left( Y\right) \right)
,u_{2}\in L^{p}\left( \Omega ;L^{p}_{per}\left( Y;W_{\#}^{1,p}\left(
Z\right) \right) \right).
\end{align*}
So  $D_{z_{i}}u_{2}=\tilde w -\left(
D_{x_{i}}u_{0}+D_{y_{i}}u_{1}\right) \in L_{per}^{B}\left( \Omega \times
Y\times Z\right) \subset L^{1}\left( \Omega ;L_{per}^{1}\left(
Y;L^{B}\left( Z\right) \right) \right)$, thus $D_{z_{i}}u_{2}\left(
x,y,\cdot\right) \in L_{per}^{B}\left( Z\right) $ for almost all $\left(
x,y\right) \in \Omega \times 
\mathbb{R}
_{y}^{N};\int_{Z}u_{2}\left( x,y,\cdot\right) dz=0$ as \ $u_{2}\left(
x,y,\cdot\right) \in W_{\#}^{1,p}\left( Z\right)$. Consequently, since $u_{2}\left(
x,y,\cdot\right) \in L_{per}^{p}\left( Z\right) \subset L_{per}^{1}\left(
Z\right) ,D_{z_{i}}u_{2}\left( x,y,\cdot\right) \in L_{per}^{B}\left( Z\right)$,  exploiting Poincare' inequality with the averaging operator $M$, as done above, it results that
$u_{2}\left( x,y,\cdot\right) \in W_{\#}^{1}L^{B}\left( Z\right).$

\noindent Since
$L^{p}\left( \Omega ;L^{p}_{per}\left( Y;W_{\# }^{1,p}\left( Z\right)
\right) \right) =$ $L^{p}_{per}\left( \Omega \times Y;W_{\#}^{1,p}\left(
Z\right) \right) \subset $ \hfill 

\noindent $L^{1}_{per}\left( \Omega \times Y;W_{\#}^{1,p}\left(
Z\right) \right) =L^{1}\left( \Omega ;L^{1}_{per}\left(
Y;W_{\#}^{1,p}\left( Z\right) \right) \right) $, we deduce that $%
u_{2}$ $\in L^{1}_{per}\left( \Omega ;L^{1}\left( Y;W_{\#}^{1}L^{B}\left(
Z\right) \right) \right) .$ 
\end{proof}

In view of the next applications, we underline that, under the assumptions of the above proposition, the canonical injection $%
	W^{1}L^{B}\left( \Omega \right) \hookrightarrow L^{B}\left( \Omega \right) $
	is compact. 

\section{Homogenization of integral energies with
convex and non standard growth}

In this section we study the asymptotic behaviour of \eqref{PR1} under the assumptions $(H_1)-(H_4)$, stated above.
  We start by recalling the properties satisfied by $F_\varepsilon$ in \eqref{Fepsilon}.

Since the function $f$ in \eqref{Fepsilon} is convex in the last argument and satisfies $(H_4)$, it results that (cf. \cite{fotso nnang 2012}) there exists a constant $c>0$ such that: \begin{equation}
\label{Blip}
\left\vert
f\left( y,z,\lambda \right) -f\left( y,z,\mu \right) \right\vert \leq c\frac{%
1+B\left( 2\left( 1+\left\vert \lambda \right\vert +\left\vert \mu
\right\vert \right) \right) }{1+\left\vert \lambda \right\vert +\left\vert
\mu \right\vert }\left\vert \lambda -\mu \right\vert 
\end{equation} for all $\lambda ,\mu
\in 
\mathbb{R}
^{nN}$ and for a.e. $y\in 
\mathbb{R}
^{N}_y$ and for all $z \in \mathbb R^N_z$.
 Hence for fixed $%
\varepsilon >0$ and for $v\in $ $W_{0}^{1}L^{B}\left( \Omega ;%
\mathbb{R}
^{nN}\right) ,$ the function $x\mapsto f\left( \frac{x}{\varepsilon },\frac{x%
}{\varepsilon ^{2}},v\left( x\right) \right) $ from $\Omega $ into $%
\mathbb{R}
_{+}$ denoted by $f^{\varepsilon }\left( \cdot,\cdot,v\right) $,\ is well defined as an element of $L^{1}\left( \Omega \right) $ and it results (arguing as in \cite[Proposition 3.1]{fotso nnang 2012})
\begin{align}\label{forestimate5.1}
\left\Vert f^{\varepsilon }\left(\cdot ,\cdot,v\right) -f^{\varepsilon }\left(\cdot,\cdot,w\right) \right\Vert _{L^{1}\left( \Omega \right) }\leq \\ \nonumber
c\left(
\left\Vert 1\right\Vert _{\widetilde{B},\Omega }+\left\Vert b\left(
1+\left\vert v\right\vert +\left\vert w\right\vert \right) \right\Vert _{%
\widetilde{B},\Omega }\right) \left\Vert v-w\right\Vert _{\left( L^{B}\left(
\Omega \right) \right) ^{nN}}.
\end{align}%
Moreover, $(H_4)$ ensures that for $v\in W_{0}^{1}L^{B}\left( \Omega ;\mathbb R^n\right) $ such that $\left\Vert Dv\right\Vert _{\left( L^{B}\left(
\Omega \right) \right) ^{nN}}\geq 1$, we have 
$$c_{1}\left\Vert Dv\right\Vert
_{\left( L^{B}\left( \Omega \right) \right) ^{nN}}\leq \left\Vert
f^{\varepsilon }\left( \cdot,\cdot,Dv\right) \right\Vert _{L^{1}\left( \Omega
\right) }\leq c_{2}\left( 1+\left\Vert Dv\right\Vert _{\left( L^{B}\left(
\Omega \right) \right) ^{nN}}\right). $$  Consequently it results that $F_{\varepsilon }$ is
continuous, strictly convex and coercive thus there exists a unique $%
u_{\varepsilon }\in W_{0}^{1}L^{B}(\Omega)$ solution of the minimization problem $\underset{v\in W_{0}^{1}L^{B}(\Omega) }{\min }F_{\varepsilon }\left( v\right) $, i.e. $$F_{\varepsilon }\left(
u_{\varepsilon }\right) =\underset{v\in W_{0}^{1}L^{B}(\Omega) }{\min }F_{\varepsilon }\left( v\right).$$

\noindent Let $\psi \in \mathcal{C}\left( \overline{\Omega };\mathcal{C}_{per}\left( Y\times Z\right) \right)^N.$ For fixed $x\in \overline{%
\Omega}$ the function $\left( y,z\right)\in 
\mathbb{R}
_{y}^{N}\times 
\mathbb{R}
_{z}^{N} \mapsto f\left( y,z,\psi \left(
x,y,z\right) \right) \in 
\mathbb{R}
_{+}$ denoted by $f\left( \cdot,\cdot,\psi \left( x,\cdot,\cdot\right) \right) $ lies in $
L^{\infty }\left( 
\mathbb{R}
_{y}^{N};\mathcal C_b\left( \mathbb{R}
_{z}^{N}\right) \right) .$ Hence one can define the function $x\in \overline \Omega \mapsto
f\left(\cdot,\cdot,\psi \left( x,\cdot,\cdot\right) \right)$ 
and denote it by $f\left( \cdot,\cdot,\psi \right) $) as
element of $\mathcal{C}\left( \overline{\Omega };L^{\infty }\left( 
\mathbb{R}
_{y}^{N};\mathcal C_b\left( 
\mathbb{R}
_{z}^{N}\right) \right) \right).$  

Therefore, for fixed $\varepsilon >0$,
the function $x\mapsto f\left( \frac{x}{\varepsilon },\frac{x}{%
\varepsilon ^{2}},\psi \left( x,\frac{x}{\varepsilon },\frac{x}{\varepsilon
^{2}}\right) \right) $ denoted by $f^{\varepsilon }\left( \cdot,\cdot,\psi
^{\varepsilon }\right) $ is an element of $L^{\infty }\left( \Omega
\right) $. Moreover, in view of the periodicity of $f\left( \cdot,\cdot,\psi \right)$, which is
in $\mathcal{C}\left( \overline{\Omega };L_{per}^{\infty }\left( Y;\mathcal{C%
}_{per}^{\infty }\left( Z\right) \right) \right) $\ for all $\psi \in 
\mathcal{C}\left( \overline{\Omega };\mathcal{C}_{per}\left( Y\times
Z\right) \right)^{N},$ the following result holds:

\begin{proposition}\label{proposition3.1}
For every $v\in \mathcal{C}\left( \overline{\Omega };\mathcal{C}_{per}\left(
Y\times Z\right) \right) ^{N}$ one has 
\begin{equation*}
\underset{\varepsilon \rightarrow 0}{\lim }\int_{\Omega }f\left(\frac{x}{%
\varepsilon },\frac{x}{\varepsilon ^{2}},v\left( x,\frac{x}{\varepsilon },%
\frac{x}{\varepsilon ^{2}}\right) \right) dx=\iiint_{\Omega \times Y\times
Z}f\left(y,z,v\left( x,y,z\right) \right) dxdydz.
\end{equation*}
Futhermore, the mapping $v\in \mathcal{C}\left( \overline{\Omega };\mathcal{C}_{per}\left( Y\times
Z\right) \right) ^{N} \mapsto $ $f\left( \cdot,\cdot,v\right) \in L_{per}^{1}\left( \Omega \times Y\times
Z\right)$ extends by continuity to a mapping still denoted by $%
v\mapsto $ $f\left( \cdot,\cdot,v\right) $ from $\ \left( L_{per}^{B}\left(
\Omega \times Y\times Z\right) \right) ^{N}$\ into $
L_{per}^{1}\left( \Omega \times Y\times Z\right)$\ such that:%
\begin{align}\label{estimate5.1}
\left\Vert f\left( \cdot,\cdot,v\right) -f\left(
\cdot,\cdot,w\right) \right\Vert _{L^{1}(\Omega \times Y \times Z)}\leq  \\ \nonumber 
c\left( \left\Vert 1\right\Vert _{\widetilde{B},\Omega }+\left\Vert b\left(
1+\left\vert v\right\vert +\left\vert w\right\vert \right) \right\Vert _{%
\widetilde{B},\Omega \times Y\times Z}\right) \left\Vert v-w\right\Vert
_{\left( L_{per}^{B}\left( \Omega \times Y\times Z\right) \right) ^{N}}
\end{align}%
\ for all $v,w\in \left( L_{per}^{B}\left( \Omega \times Y\times Z\right)
\right)^{N}$\ .
\end{proposition}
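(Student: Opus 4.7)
The plan is to establish the identity first for $v$ in the smooth class $\mathcal{C}(\overline{\Omega}; \mathcal{C}_{per}(Y \times Z))^N$ and then to extend the map $v \mapsto f(\cdot, \cdot, v)$ to $(L^B_{per}(\Omega \times Y \times Z))^N$ by density, using \eqref{estimate5.1} as a uniform continuity estimate. For smooth $v$, I would set $g(x, y, z) := f(y, z, v(x, y, z))$ and show, using $(H_1)$--$(H_4)$, the uniform continuity of $v$ on the compact set $\overline{\Omega} \times Y \times Z$, and \eqref{Blip}, that $g$ belongs to $\mathcal{C}(\overline{\Omega}; \mathfrak{X}^{B,\infty}_{per}(\mathbb{R}^N_y; \mathcal{C}_b))$: indeed $g$ is measurable in $y$, continuous in $(x, z)$, $Y \times Z$-periodic, uniformly bounded by $c_2(1 + B(\|v\|_\infty))$, and $x \mapsto g(x, \cdot, \cdot)$ is continuous into $L^\infty_{per}(Y; \mathcal{C}_{per}(Z))$. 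Then Corollary \ref{corollary2.1} applied to $g$ gives $g^\varepsilon \rightharpoonup g$ weakly reiteratively two-scale in $L^B(\Omega)$, and testing against the constant $\psi \equiv 1 \in L^{\widetilde{B}}(\Omega; \mathcal{C}_{per}(Y \times Z))$ (valid since $\Omega$ is bounded) yields the claimed limit identity.

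For the quantitative bound \eqref{estimate5.1}, I would combine \eqref{Blip} with the $\Delta_2$ condition on $B$ and the inequalities $B(t) \leq t b(t)$ and $\widetilde{B}(b(t)) \leq B(2t) \leq \alpha B(t)$ recorded in the preliminaries to control the right-hand side of \eqref{Blip} by $c(1 + b(1 + |\lambda| + |\mu|))|\lambda - \mu|$. Integrating this pointwise bound with $\lambda = v(x, y, z)$, $\mu = w(x, y, z)$ over $\Omega \times Y \times Z$ and applying the Orlicz--H\"older inequality (item (i) of the preliminaries) then produces \eqref{estimate5.1}, noting that $\Delta_2$ forces $b(1 + |v| + |w|) \in L^{\widetilde{B}}_{per}(\Omega \times Y \times Z)$ whenever $v, w \in (L^B_{per}(\Omega \times Y \times Z))^N$. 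The density of $\mathcal{C}(\overline{\Omega}; \mathcal{C}_{per}(Y \times Z))^N$ in $(L^B_{per}(\Omega \times Y \times Z))^N$ (a standard consequence of $\Delta_2$) together with \eqref{estimate5.1} then delivers a unique continuous extension of $v \mapsto f(\cdot, \cdot, v)$ into $L^1_{per}(\Omega \times Y \times Z)$, with \eqref{estimate5.1} persisting after passage to the limit.

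The main technical obstacle lies in the first step, namely verifying that the composite $g$ indeed belongs to $\mathcal{C}(\overline{\Omega}; \mathfrak{X}^{B,\infty}_{per}(\mathbb{R}^N_y; \mathcal{C}_b))$ despite $f$ being only measurable in the fast variable $y$. This ultimately reduces to a Lusin-type approximation of $L^\infty_{per}(Y; \mathcal{C}_{per}(Z))$-valued continuous maps by elements of $\mathcal{C}_{per}(Y \times Z)$ in the $\Xi^B$ norm, which is made possible by the essential boundedness of $g$ and the density properties of continuous periodic functions in the Orlicz framework; the rest of the argument then runs parallel to the single-scale case treated in \cite{fotso nnang 2012}.
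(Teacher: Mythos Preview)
Your proposal is correct and follows essentially the same route as the paper: the paper's own proof is merely a pointer to \cite[Proposition~5.1]{fotso nnang 2012}, invoking Corollary~\ref{corollary2.1} for the limit identity and \eqref{Blip} together with the Orlicz--H\"older inequality (as in \cite[Proposition~3.1]{fotso nnang 2012}) for \eqref{estimate5.1}, exactly as you outline. The only point worth noting is that the paper records, just before stating the proposition, that $f(\cdot,\cdot,\psi)\in\mathcal{C}(\overline{\Omega};L^{\infty}_{per}(Y;\mathcal{C}_{per}(Z)))$, which is the concrete form of the membership in $\mathcal{C}(\overline{\Omega};\mathfrak{X}^{B,\infty}_{per}(\mathbb{R}^{N}_{y};\mathcal{C}_b))$ you identify as the technical crux; your Lusin-type remark is the right way to close that gap.
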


\begin{proof}
It is a simple adaptations of the proof of \cite[Proposition 5.1]{fotso nnang 2012}, relying in turn on Corollary \ref{corollary2.1}. Moreover \eqref{estimate5.1} follows by \eqref{Blip} and by arguments identical to those used to deduce \eqref{forestimate5.1}, and omitted here since already presented in \cite[Proposition 3.1]{fotso nnang 2012}, which in turn require the application of Lemma \ref{lemma2.2}  
\end{proof}

\begin{corollary}\label{corollary3.1}
Let $\phi _{\varepsilon }\left( x\right) :=\psi _{0}+\varepsilon \psi
_{1}\left( x,\frac{x}{\varepsilon }\right) +\varepsilon ^{2}\psi _{1}\left(
x,\frac{x}{\varepsilon },\frac{x}{\varepsilon ^{2}}\right) $ for $x\in
\Omega ,$ where $\psi _{0}\in \mathcal{C}_{0}^{\infty }\left( \Omega \right),\psi _{1}\in \left[ \mathcal{C}_{0}^{\infty }\left( \Omega \right)
\otimes \mathcal{C}_{per}^{\infty }\left( Y\right) \right]$and $\psi
_{2}\in \left[ \mathcal{C}_{0}^{\infty }\left( \Omega \right) \otimes 
\mathcal{C}_{per}^{\infty }\left( Y\right) \otimes \mathcal{C}_{per}^{\infty
}\left( Z\right) \right]$, then, as $\varepsilon \rightarrow 0,$%
\begin{align*}
\underset{\varepsilon \rightarrow 0}{\lim }\int_{\Omega }f\left(\frac{x}{%
\varepsilon },\frac{x}{\varepsilon ^{2}},D\phi _{\varepsilon }\right) dx=
\\ 
\iiint_{\Omega \times Y\times Z}f\left(y,z,D\psi _{0}+D_{y}\psi
_{1}+D_{z}\psi _{2}\right) dxdydz.
\end{align*}
\end{corollary}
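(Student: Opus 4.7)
The plan is to reduce Corollary \ref{corollary3.1} to Proposition \ref{proposition3.1} by isolating the principal part of $D\phi_\varepsilon$ and treating the remaining terms via the Lipschitz-type estimate \eqref{estimate5.1}.

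First, I would compute $D\phi_\varepsilon$ by the chain rule. Writing the three summands separately and differentiating with respect to $x$, the factors $\varepsilon$ and $\varepsilon^2$ exactly compensate the $\tfrac{1}{\varepsilon}$ and $\tfrac{1}{\varepsilon^2}$ produced by differentiating the oscillating arguments. The outcome has the form
\begin{equation*}
D\phi_\varepsilon(x) = \Psi\!\left(x,\tfrac{x}{\varepsilon},\tfrac{x}{\varepsilon^{2}}\right) + r_\varepsilon(x),
\end{equation*}
where $\Psi(x,y,z):= D\psi_0(x) + D_y\psi_1(x,y) + D_z\psi_2(x,y,z)$ belongs to $\mathcal{C}(\overline\Omega;\mathcal{C}_{per}(Y\times Z))^N$, and $r_\varepsilon$ collects the genuinely small cross-terms, namely $\varepsilon D_x\psi_1(x,x/\varepsilon)$, $\varepsilon^2 D_x\psi_2(x,x/\varepsilon,x/\varepsilon^2)$ and $\varepsilon D_y\psi_2(x,x/\varepsilon,x/\varepsilon^2)$. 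Because $\psi_0,\psi_1,\psi_2$ are smooth and periodic in the fast variables (with compact support in $x$), each of these terms is $O(\varepsilon)$ in $L^\infty(\Omega)^{nN}$, hence $\|r_\varepsilon\|_{(L^B(\Omega))^{nN}}\to 0$ as $\varepsilon\to 0^+$.

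Next I would apply Proposition \ref{proposition3.1} with the test field $\Psi$, obtaining
\begin{equation*}
\lim_{\varepsilon\to 0}\int_\Omega f\!\left(\tfrac{x}{\varepsilon},\tfrac{x}{\varepsilon^{2}},\Psi^\varepsilon(x)\right)dx = \iiint_{\Omega\times Y\times Z} f(y,z,D\psi_0 + D_y\psi_1 + D_z\psi_2)\,dx\,dy\,dz.
\end{equation*}
Here $\Psi^\varepsilon(x) = \Psi(x,x/\varepsilon,x/\varepsilon^2)$ is well defined (and the convergence above is exactly the content of Proposition \ref{proposition3.1}).

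It then remains to show that the replacement of $\Psi^\varepsilon$ by $D\phi_\varepsilon$ in the integral on the left produces a vanishing error. For this I invoke the estimate \eqref{estimate5.1} (or equivalently \eqref{forestimate5.1}) applied to $v=D\phi_\varepsilon$ and $w=\Psi^\varepsilon$: both functions are uniformly bounded in $L^\infty(\Omega)^{nN}$ by a constant depending only on $\psi_0,\psi_1,\psi_2$, so the prefactor $\|1\|_{\widetilde B,\Omega}+\|b(1+|D\phi_\varepsilon|+|\Psi^\varepsilon|)\|_{\widetilde B,\Omega}$ stays bounded uniformly in $\varepsilon$, while $\|D\phi_\varepsilon-\Psi^\varepsilon\|_{(L^B(\Omega))^{nN}}=\|r_\varepsilon\|_{(L^B(\Omega))^{nN}}\to 0$. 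Combining the two contributions gives the claimed limit.

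The only genuinely delicate point is the very first step: verifying that the cross-terms collected in $r_\varepsilon$ really carry the announced factor of $\varepsilon$. Once one is careful that in $\varepsilon\psi_1(x,x/\varepsilon)$ the fast derivative contributes a factor $\varepsilon\cdot\varepsilon^{-1}=1$ (yielding the principal term $D_y\psi_1$), while the slow derivative retains the full factor $\varepsilon$ (yielding a vanishing remainder), and similarly for $\psi_2$ with both $\varepsilon$ and $\varepsilon^2$ compensations, everything else is a routine application of the preceding machinery.
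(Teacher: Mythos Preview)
Your proof is correct and follows essentially the same route as the paper's: decompose $D\phi_\varepsilon=\Psi^\varepsilon+r_\varepsilon$ with $\Psi=D\psi_0+D_y\psi_1+D_z\psi_2$, apply the convergence result for the principal part, and control the remainder via the Lipschitz-type estimate \eqref{forestimate5.1}. The only cosmetic difference is that the paper phrases the convergence step through Corollary \ref{corollary2.1} (noting that $f(\cdot,\cdot,\Psi)\in\mathcal C(\overline\Omega;\mathfrak X_{per}^{B,\infty}(\mathbb R^N_y;\mathcal C_b))$ since $f$ is merely measurable in $y$), whereas you invoke Proposition \ref{proposition3.1} directly; the content is the same.
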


\begin{proof}
It is a simple adaptations of \cite[Corollary 5.1]{fotso nnang 2012}, relying on \eqref{Blip} and \eqref{forestimate5.1}, observing that 
$f^\varepsilon(\cdot, \cdot, (D\psi_0 + D_y \psi_1+ D_z \psi_2)^\varepsilon) \in C(\overline \Omega; \mathfrak{X}_{per}^{B,\infty
}(\mathbb R_y^N;\mathcal C_b))$ and Corollary \ref{corollary2.1} applies.
\end{proof}

\smallskip

Now, we observe that, thanks to the density of $\mathcal{D}(
\Omega) $ in $W_{0}^{1}L^{B}(\Omega),$ of $\mathcal{C}_{per}^{\infty }(Y)/
\mathbb{R}
$ in $W_{\#}^{1}L_{per}^{B}(Y) $ and that of $\mathcal{C}_{per}^{\infty }(Y) \otimes \mathcal{C}_{per}^{\infty }( Z) /\mathbb R$ in $L_{per}^1( Y;W_{\#}^{1}L^{B}(Z)),$ the space 
\begin{equation}\label{Finfty0}
F_{0}^{\infty }:=\mathcal{D}(\Omega) \times \left[ \mathcal{D}(\Omega) \otimes \mathcal{C}_{per}^{\infty }(Y) /
\mathbb{R}
\right] \times \left[ \mathcal{D}(\Omega) \otimes \mathcal{C}_{per}^{\infty }(Y) \otimes \mathcal{C}_{per}^{\infty }(Z) /
\mathbb{R} 
\right] 
\end{equation} is dense in $\mathbb{F}_{0}^{1}L^{B}.$

\medskip

By hypotheses $\left( H_{1}\right)
-\left( H_{4}\right) $, it is easily seen that the following result holds

\begin{lemma}\label{mp}
\bigskip There exists a unique $u=\left( u_{0},u_{1},u_{2}\right) \in 
\mathbb{F}_{0}^{1}L^{B}$ such that $u$ solves \eqref{MP1}.

\end{lemma}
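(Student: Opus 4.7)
The plan is to apply the direct method of the calculus of variations to the functional $F$ on the Banach space $\mathbb{F}_{0}^{1}L^{B}$: establish coercivity, weak lower semicontinuity, and strict convexity, and then conclude existence and uniqueness in a standard fashion.

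First I would verify \emph{coercivity}. From $(H_{4})$ we have $F(v)\geq c_{1}\iiint_{\Omega\times Y\times Z}B(|\mathbb{D}v|)\,dxdydz$. To recover the three individual contributions appearing in the norm of $\mathbb{F}_{0}^{1}L^{B}$, one exploits the zero-mean conditions built into the spaces $W_{\#}^{1}L^{B}(Y)$ and $W_{\#}^{1}L^{B}(Z)$: by the periodicity and vanishing mean of $v_{1}$ (resp. $v_{2}$) one gets $\int_{Y}D_{y}v_{1}\,dy=0$ and $\int_{Z}D_{z}v_{2}\,dz=0$. Applying Jensen's inequality to $B$ then yields the pointwise/averaged estimates
\begin{equation*}
B(|Dv_{0}|)\leq \iint_{Y\times Z}B(|\mathbb{D}v|)\,dydz,\qquad B(|Dv_{0}+D_{y}v_{1}|)\leq \int_{Z}B(|\mathbb{D}v|)\,dz.
\end{equation*}
Combining these with the $\triangle_{2}$ condition (so that $B(|a-b|)\leq C(B(|a|)+B(|b|))$) delivers bounds on $\iint B(|D_{y}v_{1}|)$ and $\iiint B(|D_{z}v_{2}|)$ separately, giving $\|v\|_{\mathbb{F}_{0}^{1}L^{B}}\to\infty\Rightarrow F(v)\to\infty$.

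Next I would argue \emph{lower semicontinuity}. Since $f(y,z,\cdot)$ is convex by $(H_{2})$, $F$ is convex on $\mathbb{F}_{0}^{1}L^{B}$; moreover the continuity of $v\mapsto f(\cdot,\cdot,v)$ from $(L^{B}_{per}(\Omega\times Y\times Z))^{N}$ to $L^{1}_{per}(\Omega\times Y\times Z)$ established in Proposition \ref{proposition3.1} (see \eqref{estimate5.1}) shows that $F$ is strongly continuous, hence also weakly lower semicontinuous by convexity. A minimizing sequence $(v^{n})\subset\mathbb{F}_{0}^{1}L^{B}$ is bounded in norm by coercivity, and since $L^{B}$ is reflexive (as both $B$ and $\widetilde B$ satisfy $\triangle_{2}$), one extracts a subsequence with $Dv_{0}^{n}\rightharpoonup Du_{0}$, $D_{y}v_{1}^{n}\rightharpoonup D_{y}u_{1}$, $D_{z}v_{2}^{n}\rightharpoonup D_{z}u_{2}$ weakly in the respective $L^{B}$ spaces, for some $u=(u_{0},u_{1},u_{2})\in\mathbb{F}_{0}^{1}L^{B}$ (using Poincar\'e inequalities together with zero-trace / zero-mean conditions to pass to limits in the lower order terms). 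Weak lsc then gives $F(u)\leq\liminf F(v^{n})=\inf_{\mathbb{F}_{0}^{1}L^{B}}F$, whence existence.

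Finally I would prove \emph{uniqueness} via strict convexity of $F$. The strict convexity of $f(y,z,\cdot)$ in $(H_{2})$ alone does not suffice; it must be combined with the injectivity of the operator $\mathbb{D}:\mathbb{F}_{0}^{1}L^{B}\to(L^{B}_{per}(\Omega\times Y\times Z))^{N}$. To see $\mathbb{D}$ is injective, suppose $\mathbb{D}v=0$. Integrating over $Y\times Z$ and using the periodicity plus vanishing averages of $v_{1}$ in $y$ and of $v_{2}$ in $z$, one obtains $Du_{0}=0$ in $\Omega$, forcing $v_{0}=0$ since $v_{0}\in W_{0}^{1}L^{B}(\Omega)$. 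Then integrating $D_{y}v_{1}+D_{z}v_{2}=0$ in $z$ gives $D_{y}v_{1}=0$, hence $v_{1}=0$ by the zero-mean condition on $Y$; consequently $D_{z}v_{2}=0$ and $v_{2}=0$ likewise. So $\mathbb{D}$ is injective, and combined with strict convexity of $f$ in the last variable, $F$ is strictly convex on $\mathbb{F}_{0}^{1}L^{B}$, which rules out more than one minimizer. The main obstacle is the coercivity step, because the natural lower bound from $(H_{4})$ involves only the composite quantity $\mathbb{D}v$, and disentangling it into norms of each component $Dv_{0}$, $D_{y}v_{1}$, $D_{z}v_{2}$ crucially requires both the periodic zero-mean structure and the $\triangle_{2}$ property of $B$.
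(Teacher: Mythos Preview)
The paper does not actually supply a proof of this lemma: it simply asserts that ``By hypotheses $(H_{1})$--$(H_{4})$, it is easily seen that the following result holds''. Your argument via the direct method---coercivity from $(H_{4})$ combined with Jensen's inequality and the $\triangle_{2}$ condition to disentangle $\mathbb{D}v$ into its three pieces, weak lower semicontinuity from convexity plus the strong continuity estimate \eqref{estimate5.1}, and uniqueness from strict convexity together with the injectivity of $\mathbb{D}$---is precisely the standard route one takes to justify such a statement, and it is correct. One small remark: the identities $\int_{Y}D_{y}v_{1}\,dy=0$ and $\int_{Z}D_{z}v_{2}\,dz=0$ that you invoke in the coercivity step follow from \emph{periodicity} alone (integration by parts on the periodic cell), not from the zero-mean normalization; the zero-mean condition is what you use---correctly---in the injectivity/uniqueness step.
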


\subsection{Proof of Theorem \ref{main}}
This subsection is devoted to provide an application of reiterated two-scale convergence to the study of minimum problems involving integral functionals, i.e. to prove Theorem \ref{main}.
The proof will be achieved by means of several steps. First, following the same strategy in \cite{NNW}, (see also \cite{MT}) we regularize the integrands in order to get an approximating family of differentiable
integrands with some extra properties which will be detailed in the sequel. 

Let $f:\mathbb R^{N}\times
\mathbb R
^N\times \mathbb R^{nN} \to \mathbb R$ be such that $(H_1)-(H_4)$ hold.
Set
\begin{equation}
\label{fm}f_{m}:\left( y,z,\lambda \right)\in 
\mathbb{R}
^{N}\times
\mathbb{R}
^{N}\times 
\mathbb{R}
^{nN} \mapsto \int_{\mathbb R^{nN}} \theta _{m}\left( \eta
\right) f\left( y,z,\lambda -\eta \right) d\eta, 
\end{equation} where $\theta _{m}$ is a symmetric mollifier, namely $\theta_m \in \mathcal{D}\left( 
\mathbb{R}
^{nN}\right) \left( \text{integer }m\geq 1\right) $ with $0\leq \theta _{m},$
 ${\rm supp}\left( \theta _{m}\right) \subset \frac{1}{m}\overline{B}_{nN}(0,1)$, 
 $( B_{nN}(0,1)\text{ being the open unit ball in }\mathbb{R}
^{nN}$, and $\displaystyle{\int_{\overline{B_{nN}(0,1)}} \theta _{m}\left( \eta \right) d\eta =1}$. It is easily verified that
\begin{itemize}
\item[$(H_1) _{m}$] $f_{m}\left( \cdot,z,\lambda \right) $ is measurable
for every $(z,\lambda) \in 
\mathbb{R}
^{N}\times \mathbb{R}
^{nN}$ and $f_{m}\left( y,\cdot ,\lambda \right) $ is continuous for almost all $%
y\in 
\mathbb{R}
_{y}^{N};$
\item[$\left( H_{2}\right) _{m}$] $f_{m}\left( y,z,\cdot \right) $ is strictly convex
for almost all $\left( y,z\right) \in 
\mathbb R_y^N\times 
\mathbb{R}
_{z}^{N}$.
\item[$\left( H_{3}\right) _{m}$] There exists a constant $c>0$ such that:
\begin{equation*}
f_{m}\left( y,z,\lambda \right) \leq c\left( 1+b\left( \left\vert \lambda
\right\vert \right) \right) ,
\end{equation*}%
for every $(z,\lambda) \in \mathbb R \times
\mathbb{R}
^{nN},$ and for almost all $y\in 
\mathbb{R}
^{N}.$
\item[$\left( H_{4}\right) _{m}$] $f_{m}\left( \cdot,\cdot,\lambda \right) $\ is periodic
for all $\lambda \in
\mathbb{R}
^{nN}$\ 
\item[$\left( H_{5}\right) _{m}$] $\frac{\partial f_{m}}{\partial \lambda }\left(
y,z,\lambda \right) $ exists for all $\lambda \in 
\mathbb{R}
^{nN}$ and for almost all $\left(y,z\right) $ and there exist a constant $
c=c\left( m\right) >0$ such that:
$$\left\vert \frac{\partial f_{m}}{\partial
\lambda }\left( y,z,\lambda \right) \right\vert \leq c\left( m\right) \left(
1+b\left( \left\vert \lambda \right\vert \right) \right) $$ for all $\lambda
\in 
\mathbb R
^{nN}$ and for almost all $\left(y,z\right) \in 
\mathbb{R}^{N}\times \mathbb R^N.$
\end{itemize}

All the convergence results established in Proposition \ref{proposition3.1} and Corollary \ref{corollary3.1} for $f$, remain valid with $f_{m}$ .
Moreover for every \ $v\in L_{per}^{B}\left( \Omega \times Y\times Z\right)
^{nN}$, one has $f_{m}\left( \cdot,\cdot, v\right)
\rightarrow f\left( \cdot,\cdot, v\right) $ in $L^{1}\left( \Omega ;L_{per}^{1}\left(
Y\times Z\right) \right),$ as $m \to +\infty$.

The next result extends to the Orlicz setting an argument presented in \cite{NNW} to prove Corollary 2.10 therein.
\begin{proposition}
Let $\left( v_{\varepsilon }\right)$ be a sequence in $%
L^{B}\left( \Omega \right) ^{nN}$ which reiteratively  two-scale
converges (in each component) to $v\in L_{per}^{B}\left( \Omega \times Y\times
Z\right) ^{nN}$, then, for any integer $m\geq 1$, we have that there exists a constant $C'$ such that
\begin{equation*}
\iiint_{\Omega
\times Y\times Z}f_{m}\left( y,z,v\right) dxdydz-\frac{C'}{m}\leq \underset{\varepsilon
\rightarrow 0}{\lim \inf }\int_{\Omega }f\left( \frac{x}{\varepsilon },%
\frac{x}{\varepsilon ^{2}},v_{\varepsilon }\left( x\right) \right) dx.
\end{equation*}
\end{proposition}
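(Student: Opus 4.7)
My approach would combine a mollification error estimate based on \eqref{Blip} with the standard subgradient trick for convex integrands, followed by a reiterated two-scale limit passage against a smooth test function and a density argument.

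First, I would control $f - f_m$ pointwise. Starting from \eqref{fm} and \eqref{Blip}, the bound $\mathrm{supp}(\theta_m)\subset \frac{1}{m}\overline B_{nN}(0,1)$ yields
\[
|f(y,z,\lambda)-f_m(y,z,\lambda)|\leq \int_{|\eta|\leq 1/m} \theta_m(\eta)|f(y,z,\lambda)-f(y,z,\lambda-\eta)|\,d\eta \leq \frac{c}{m}\,\frac{1+B(4(1+|\lambda|))}{1+|\lambda|}.
\]
Since any weakly reiteratively two-scale convergent sequence is bounded in $L^B(\Omega)^{nN}$, splitting $\Omega=\{|v_\varepsilon|\leq 1\}\cup\{|v_\varepsilon|>1\}$ and invoking $\triangle_2$ gives $\int_\Omega \bigl|f^\varepsilon(\cdot,\cdot,v_\varepsilon)-f_m^\varepsilon(\cdot,\cdot,v_\varepsilon)\bigr|\,dx\leq C'/m$ uniformly in $\varepsilon$.

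Next, fix a test function $\varphi\in \mathcal C(\overline\Omega;\mathcal C_{per}(Y\times Z))^{nN}$. By convexity of $f_m(y,z,\cdot)$ together with $(H_5)_m$, the subgradient inequality at $\varphi(x,y,z)$ evaluated at $v_\varepsilon(x)$ reads
\[
f_m(y,z,v_\varepsilon(x))\geq f_m(y,z,\varphi(x,y,z))+\tfrac{\partial f_m}{\partial \lambda}(y,z,\varphi(x,y,z))\cdot\bigl(v_\varepsilon(x)-\varphi(x,y,z)\bigr).
\]
Setting $(y,z)=(x/\varepsilon,x/\varepsilon^2)$ and integrating, I would let $\varepsilon\to 0$. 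Because $\varphi$ is bounded, $(H_3)_m$ and $(H_5)_m$ guarantee that both $f_m(\cdot,\cdot,\varphi)$ and $\partial_\lambda f_m(\cdot,\cdot,\varphi)$ belong to $\mathcal C(\overline\Omega)\otimes \mathfrak{X}_{per}^{B,\infty}(\mathbb R_y^N;\mathcal C_b)$; thus the analog of Proposition \ref{proposition3.1} for $f_m$ handles the $f_m^\varepsilon(\varphi^\varepsilon)$ term and Proposition \ref{prop2.4} handles $\int_\Omega (\partial_\lambda f_m)^\varepsilon(\varphi^\varepsilon)\cdot v_\varepsilon\,dx$. Combining this limit passage with the Step 1 estimate yields
\[
\liminf_{\varepsilon\to 0}\int_\Omega f^\varepsilon(\cdot,\cdot,v_\varepsilon)\,dx\geq -\frac{C'}{m}+\iiint_{\Omega\times Y\times Z} f_m(y,z,\varphi)\,dxdydz+\iiint \tfrac{\partial f_m}{\partial \lambda}(y,z,\varphi)\cdot(v-\varphi)\,dxdydz.
\]

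Finally, I would approximate $v$ in $L^B_{per}(\Omega\times Y\times Z)^{nN}$ by a sequence $(\varphi_k)\subset \mathcal C(\overline\Omega;\mathcal C_{per}(Y\times Z))^{nN}$ (density being valid since $B\in\triangle_2$). The continuity of $w\mapsto \iiint f_m(\cdot,\cdot,w)$ from $L^B_{per}$ to $L^1_{per}$ (extending Proposition \ref{proposition3.1} to $f_m$, which inherits all relevant properties of $f$) drives the first volume integral to $\iiint f_m(y,z,v)$, while the Orlicz H\"older inequality together with the bound $|\partial_\lambda f_m|\leq c(m)(1+b(|\varphi_k|))$ forces the bilinear cross term to vanish. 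The main technical obstacle I foresee is twofold: ensuring that $C'$ in Step 1 is genuinely independent of both $\varepsilon$ and $m$ (which hinges on combining \eqref{Blip} with $\triangle_2$ in the Orlicz modular rather than in the norm), and verifying carefully that $\partial_\lambda f_m(\cdot,\cdot,\varphi)$ fits the measurability/continuity profile of the extended test-function class of Proposition \ref{prop2.4}.
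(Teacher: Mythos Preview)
Your proposal is correct and follows essentially the same route as the paper: a pointwise mollification error estimate $f_m\leq f + C(1+b(2(1+|\lambda|)))/m$ derived from \eqref{Blip}, the subgradient inequality for $f_m$ against a smooth test function, passage to the reiterated two-scale limit in both terms, and a density approximation of $v$ in $L^B_{per}$ to kill the cross term via the Orlicz H\"older inequality and $(H_5)_m$. The paper approximates by $v_l\in\mathcal D(\Omega)\otimes\mathcal C^\infty_{per}(Y)\otimes\mathcal C^\infty_{per}(Z)$ rather than $\mathcal C(\overline\Omega;\mathcal C_{per}(Y\times Z))$ and spells out the $\triangle_2$ splitting of $\Omega$ for the $C'/m$ bound in full detail, but the architecture is identical.
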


\begin{proof}
Let $\left( v_{l}\right) _{l\geq 1}$ be a sequence in $\mathcal{D}(\Omega ;\mathbb R) \otimes \mathcal{C}_{per}^{\infty }(Y;
\mathbb R) \otimes \mathcal{C}_{per}^{\infty }( Z;
\mathbb R) $ such that $v_{l}\rightarrow v$ in $L_{per}^{B}\left( \Omega \times
Y\times Z\right) ^{nN}$ as $l\rightarrow \infty .$ The convexity and
differentiability of $f_{m}\left( y,z,\cdot\right) $
imply (for any integer $l\geq 1)$, 
\begin{align*}
\int_{\Omega }f_{m}\left( \frac{x}{%
\varepsilon },\frac{x}{\varepsilon ^{2}},v_{\varepsilon }\left( x\right)
\right) dx\geq \int_{\Omega }f_{m}\left( \frac{x}{\varepsilon },\frac{x}{%
\varepsilon ^{2}},v_{l}\left( x,\frac{x}{\varepsilon },\frac{x}{\varepsilon
^{2}}\right) \right) dx
\\
+\int_{\Omega }\frac{\partial f_{m}}{\partial \lambda }\left(\frac{x}{%
\varepsilon },\frac{x}{\varepsilon ^{2}},v_{l}\left( x,\frac{x}{\varepsilon }%
,\frac{x}{\varepsilon ^{2}}\right) \right) \cdot\left( v_{\varepsilon }\left(
x\right) -v_{l}\left(x,\frac{x}{\varepsilon },\frac{x}{\varepsilon ^{2}}%
\right) \right) dx.
\end{align*}

\noindent $(H_1)_m, (H_2)_m$ and $(H_5)_m$ guarantee that $x\longmapsto \frac{\partial
f_{m}}{\partial \lambda }\left(\cdot, \cdot,v_{l}\right) \in \mathcal{C}%
\left( \overline{\Omega };L_{per}^{\infty }\left( Y;\mathcal{C}%
_{per}^{\infty }\left( Z\right) \right) \right) $ hence, by Proposition \ref{proposition3.1}, it results
\begin{align*}
\underset{\varepsilon \rightarrow 0}{\lim }\int_{\Omega }\frac{\partial
f_{m}}{\partial \lambda }\left( \frac{x}{\varepsilon },\frac{x}{\varepsilon
^{2}},v_{l}\left( x,\frac{x}{\varepsilon },\frac{x}{\varepsilon ^{2}}\right)
\right) \cdot\left( v_{\varepsilon }\left( x\right) -v_{l}\left( x,\frac{x}{%
\varepsilon },\frac{x}{\varepsilon ^{2}}\right) \right) dx\\
=\iiint_{\Omega \times Y\times Z}\frac{\partial f_{m}}{\partial \lambda }%
\left(y,z,v_{l}\left( x,y,z\right) \right) \cdot\left( v\left( x,y,z\right)
-v_{l}\left( x,y,z\right) \right) dxdydz.
\end{align*}

Next, we observe that for a.e. $y$ and every $z, \lambda$ and a suitable positive constant $c$, one has
\begin{equation}\label{estimatefmf}
f_{m}\left( y,z,\lambda \right) \leq f\left( y,z,\lambda \ \right) +\frac{1}{%
	m}c\left( 1+b\left( 2\left( 1+\left\vert \lambda \ \right\vert \right)
\right) \right) .
\end{equation}
Indeed, for a.e. $y$, every $z,\lambda, \mu$, by \eqref{Blip}, 
\begin{align*}
f\left( y,z,\lambda \right) \leq f\left(y,z,\mu \right) +c\frac{B\left( 2\left( 1+\left\vert \lambda \right\vert +\left\vert \mu \right\vert
	\right) \right) }{1+\left\vert \lambda \right\vert +\left\vert \mu
	\right\vert }\left\vert \lambda -\mu \right\vert \\ \leq f\left(y,z,\mu
\right) +c\left( 1+b\left( 1+\left\vert \lambda \right\vert +\left\vert \mu
\right\vert \right) \right) \left\vert \lambda -\mu \right\vert .
\end{align*}
 Replacing $
\lambda $ by $\lambda -\eta $\ and $\mu $ by $\lambda $\ respectively, we 
obtain: 
\begin{align*}
f\left(y,z,\lambda -\eta\right) \leq f\left( y,z,\lambda\right) +c\left( 1+b\left( 1+\left\vert \lambda -\eta\right\vert
+\left\vert \lambda\right\vert \right) \right) \left\vert \eta\right\vert \\
\leq f\left( y,z,\lambda \right) +c\left( 1+b\left(
1+\left\vert \eta\right\vert +2\left\vert \lambda \right\vert \right)
\right) \left\vert \eta\right\vert.
\end{align*} 
Let $m>0,$ and assume $\left\vert
\eta \ \right\vert \leq \frac{1}{m}\leq 1,$ hence, $$f\left( y,z,\lambda
-\eta \right) \leq f\left( x,y,\lambda\right)+
c\left( 1+b\left( 2\left( 1+\left\vert \lambda \right\vert \right)
\right) \right) \frac{1}{m}.$$ 
Multiplying both side of the inequality, by $\theta_{m}$, we
get: 
\begin{align*}
f\left(y,z,\lambda -\eta\right) \theta _{m}\left( \eta \right)
\leq f\left(y,z,\lambda \right) \theta _{m}\left( \eta \right) +\frac{1}{m}c\left( 1+b\left( 2\left( 1+\left\vert \lambda \right\vert \right)
\right) \right) \theta _{m}\left( \eta \right) .
\end{align*}
Integration leads to \eqref{estimatefmf}.
Hence, given $v_\varepsilon$, we have 
\begin{align*}
f_{m}\left( \frac{x}{\varepsilon },\frac{x}{\varepsilon ^{2}}%
,v_{\varepsilon }\right) \leq f\left( \frac{x}{\varepsilon },\frac{x}{%
	\varepsilon ^{2}},v_{\varepsilon }\ \right) +\frac{1}{m}c\left( 1+b\left(
2\left( 1+\left\vert v_{\varepsilon }\right\vert \right) \right) \right) 
\end{align*}
thus 
\begin{align*}
\int_{\Omega }f_{m}\left( \frac{x}{\varepsilon },\frac{x}{\varepsilon
	^{2}},v_{\varepsilon }\right) dx\leq \int_{\Omega }f\left( \frac{x}{%
	\varepsilon },\frac{x}{\varepsilon ^{2}},v_{\varepsilon }\ \right) dx+\frac{%
	1}{m}C |\Omega| +\frac{c}{m}\int_{\Omega }\alpha \frac{%
	b\left( 2\left( 1+\left\vert v_{\varepsilon }\right\vert \right) \right) }{%
	\alpha }dx,
\\0<\alpha \leq 1
\end{align*}

\bigskip But $\alpha \frac{b\left( 2\left( 1+\left\vert v_{\varepsilon
	}\right\vert \right) \right) }{\alpha }\leq \widetilde{B}\left( \alpha
b\left( 2\left( 1+\left\vert v_{\varepsilon }\right\vert \right) \right)
\right) +B\left( \frac{1}{\alpha }\right) \leq \alpha \widetilde{B}\left(
b\left( 2\left( 1+\left\vert v_{\varepsilon }\right\vert \right) \right)
\right) +B\left( \frac{1}{\alpha }\right) $

Set $\Omega _{1}=\left\{ x\in \Omega :2\left( 1+\left\vert v_{\varepsilon
}\left( x\right) \right\vert \right) >t_{0}\right\} ,\Omega_2=\Omega
\backslash \Omega_{1}.$ 

Hence, we get 
\begin{align*}
\int_{\Omega }\alpha \frac{b\left( 2\left( 1+\left\vert
	v_{\varepsilon }\right\vert \right) \right) }{\alpha }dx\leq \int_{\Omega
}\alpha \widetilde{B}\left( b\left( 2\left( 1+\left\vert v_{\varepsilon
}\right\vert \right) \right) \right) dx+B\left( \frac{1}{\alpha }\right)|\Omega|\leq \\
\\\int_{\Omega _{1}}\alpha \widetilde{B}\left( b\left( 2\left( 1+\left\vert
v_{\varepsilon }\right\vert \right) \right) \right) dx+\int_{\Omega
	_{2}}\alpha \widetilde{B}\left( b\left( 2\left( 1+\left\vert v_{\varepsilon
}\right\vert \right) \right) \right) dx+B\left( \frac{1}{\alpha }\right)
| \Omega| \leq \\
|\Omega _{2}| \alpha \widetilde{B}\left( b\left(
t_{0}\right) \right) +B\left( \frac{1}{\alpha }\right) |\Omega| +\alpha \int_{\Omega_{1}}B\left( 4\left( 1+\left\vert v_{\varepsilon }\right\vert \right) \right) dx.
\end{align*}
\bigskip 

Let $C>1+\left\Vert 4\left( 1+\left\vert v_{\varepsilon
}\right\vert \right) \right\Vert _{B,\Omega }.$ Then $\int_{\Omega }B\left( 
\frac{4\left( 1+\left\vert v_{\varepsilon }\right\vert \right) }{C}\right)
dx\leq 1.$

Since  $B\left( 4\left( 1+\left\vert v_{\varepsilon }\right\vert
\right) \right) =B\left( C\frac{4\left( 1+\left\vert v_{\varepsilon}\right\vert \right) }{C}\right) \leq K\left( C\right) B\left( \frac{4\left(
	1+\left\vert v_{\varepsilon }\right\vert \right) }{C}\right)$ whenever $\frac{%
	4\left( 1+\left\vert v_{\varepsilon }\right\vert \right) }{C}\geq t_{0}.$

Set $\Omega _{3}=\left\{ x\in \Omega _{1}:\frac{4\left( 1+\left\vert v_{\varepsilon }\right\vert \right) }{C}\geq t_{0}\right\} ,\Omega
_{4}=\Omega _{1}\backslash \Omega _{3}.$

Hence 
\begin{align*}
&\int_{\Omega _{1}}B\left(
4\left( 1+\left\vert v_{\varepsilon }\right\vert \right) \right)
dx=\int_{\Omega _{4}}B\left( 4\left( 1+\left\vert v_{\varepsilon
}\right\vert \right) \right) dx+\int_{\Omega _{3}}B\left( 4\left(
1+\left\vert v_{\varepsilon }\right\vert \right) \right) dx\\
&\leq  |\Omega _{4}| B\left( Ct_{0}\right) +\int_{\Omega
	_{3}}B\left( 4\left( 1+\left\vert v_{\varepsilon }\right\vert \right)
\right) dx\leq |\Omega _{4}| B\left( Ct_{0}\right)
+\int_{\Omega _{3}}B\left( C\frac{4\left( 1+\left\vert v_{\varepsilon	}\right\vert \right) }{C}\right) dx\\
&\leq| \Omega _{4}| B\left( Ct_{0}\right) +K\left( C\right)
\int_{\Omega _{3}}B\left( \frac{4\left( 1+\left\vert v_{\varepsilon}\right\vert \right) }{C}\right) dx\leq | \Omega_4|B\left(
Ct_{0}\right) +K\left( C\right) \int_{\Omega }B\left( \frac{4\left(
	1+\left\vert v_{\varepsilon}\right\vert \right) }{C}\right) dx\\
&\leq |\Omega _{4}| B\left( Ct_{0}\right) +K\left( C\right)\int_{\Omega }B\left( \frac{4\left(
	1+\left\vert v_{\varepsilon}\right\vert \right) }{C}\right) dx.
\end{align*}

Since $\ B\in \triangle _{2}$, and $\left( v_{\varepsilon }\right) $ is bounded in $L^{B}\left( \Omega \right)$ it results that $\int_{\Omega }B\left(
4\left( 1+\left\vert v_{\varepsilon }\right\vert \right) \right) dx$ is also
bounded.

Then we have 
\begin{align*}
&\int_{\Omega }f_{m}\left( \frac{x}{\varepsilon },\frac{x}{%
	\varepsilon ^{2}},v_{\varepsilon }\right) dx\leq \int_{\Omega }f\left( 
\frac{x}{\varepsilon },\frac{x}{\varepsilon ^{2}},v_{\varepsilon }\right)
dx+\frac{1}{m}C|\Omega| +\\
&\frac{c}{m}\left( \alpha |\Omega| \widetilde{B}\left(
b\left( t_{0}\right) \right) +B\left( \frac{1}{\alpha }\right)|
\Omega| +\alpha \left(|\Omega _{4}|B\left(
Ct_{0}\right) +K\left( C\right) \right)\int_{\Omega }B\left( \frac{4\left(
	1+\left\vert v_{\varepsilon}\right\vert \right) }{C}\right) dx \right)\\
&\leq \int_{\Omega }f\left( 
\frac{x}{\varepsilon },\frac{x}{\varepsilon ^{2}},v_{\varepsilon }\right)
dx+\frac{1}{m}C',
\end{align*}
for a suitably big constant $C'$.
Thus 


\begin{align*}
\underset{\varepsilon \rightarrow 0}{\lim \inf }\int_{\Omega }f\left(\frac{x}{\varepsilon },\frac{x}{\varepsilon ^{2}},v_{\varepsilon }\left(
x\right) \right) dx\geq \iiint_{\Omega \times Y\times Z}f_{m}\left(
y,z,v_{l}\left( x,y,z\right) \right) dxdydz
\\
-\frac{C'}{m}+\iiint_{\Omega \times Y\times Z}\frac{\partial f_{m}}{\partial \lambda }%
\left(y,z,v_{l}\left( x,y,z\right) \right) \cdot\left( v\left( x,y,z\right)
-v_{l}\left( x,y,z\right) \right) dxdydz.
\end{align*}
Using \ $\left( H_{5}\right)
_{m}$\ we get 
\begin{align*}
\left\vert \iiint_{\Omega \times Y\times Z}\frac{\partial f_{m}%
}{\partial \lambda }\left(y,z,v_{l}\left( x,y,z\right) \right) \cdot \left(
v\left( x,y,z\right) -v_{l}\left( x,y,z\right) \right) dxdydz\right\vert 
\\
\leq c\left\Vert v-v_{l}\right\Vert _{B,\Omega \times Y\times Z}\cdot\left\Vert
1+b\left( v_{l}\right) \right\Vert _{\widetilde{B},\Omega \times Y\times Z}.
\end{align*}
Since $v_{l}\rightarrow v$ in $L_{per}^{B}\left( \Omega \times Y\times
Z\right) ^{nN}$ as $l\rightarrow \infty ,$ it follows that for $\delta >0$
arbitrarily fixed, there exists $l_{0}\in 
\mathbb N,$ such that 
\begin{equation*}\left\vert \iiint_{\Omega \times Y\times Z}\frac{\partial f_{m}}{%
\partial \lambda }\left(y,z,v_{l}\left( x,y,z\right) \right) \cdot\left(
v\left( x,y,z\right) -v_{l}\left( x,y,z\right) \right) dxdydz\right\vert 
\leq \delta 
\end{equation*} for all $l\geq l_{0}.$ Hence for all $l\geq l_{0},$ 
\begin{equation*}
\underset{\varepsilon \rightarrow 0}{\lim \inf }\int_{\Omega }f\left(%
\frac{x}{\varepsilon },\frac{x}{\varepsilon ^{2}},v_{\varepsilon }\left(
x\right) \right) dx\geq \iiint_{\Omega \times Y\times Z}f_{m}\left(
y,z,v_{l}\left( x,y,z\right) \right) dxdydz-\delta- \frac{C'}{m} ;
\end{equation*}
Now sending $l \to \infty$ we have
$$
\underset{\varepsilon \rightarrow 0}{\lim \inf }\int_{\Omega }f\left(%
\frac{x}{\varepsilon },\frac{x}{\varepsilon ^{2}},v_{\varepsilon }\left(
x\right) \right) dx\geq \iiint_{\Omega \times Y\times Z}f_{m}\left(
y,z,v\left( x,y,z\right) \right) dxdydz-\delta- \frac{C'}{m}.
$$
The arbritrariness of $\delta$, concludes the proof.
\end{proof}

Letting $m \to +\infty$, and replacing $v_\varepsilon$ by $Du_\varepsilon$, with $u_\varepsilon$ reiteratively two-scale convergent to $u(x,y,z):= u_0(x)+u_1(x,y)+u_2(x,y,z)$ in $W^1L^B(\Omega;\mathbb R^n)$, one obtains the following result:

\begin{corollary}\label{maincor}
Let $\left( u_{\varepsilon }\right) _{\varepsilon }$ be a sequence in $%
W_{0}^{1}L^{B}\left( \Omega;
\mathbb R ^n\right) $
reiteratively two-scale convergent to $
u=\left( u_{0},u_{1},u_{2}\right) \in \mathbb{F}_{0}^{1}L^{B}$. Then 
\begin{equation*}
\iiint_{\Omega \times Y\times Z}f\left( y,z,\mathbb{D}u\left( x,y,z\right)
\right) dxdydz\leq \underset{\varepsilon \rightarrow 0}{\lim \inf }%
\int_{\Omega }f\left(\frac{x}{\varepsilon },\frac{x}{\varepsilon ^{2}}%
,Du_{\varepsilon }\left( x\right) \right) dx,
\end{equation*}
where 
$\mathbb{D}u=Du_{0}+D_{y}u_{1}+D_{z}u_{2}$.
\end{corollary}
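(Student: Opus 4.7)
The plan is to apply the preceding proposition to the sequence $v_\varepsilon := Du_\varepsilon$, obtain a lower bound in terms of the regularized integrand $f_m$ with an error $O(1/m)$, and then let $m\to\infty$. The structure is exactly the one suggested by the authors in the sentence preceding the statement.

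First I would verify the hypotheses of that proposition. Since $(u_\varepsilon)$ is reiteratively two-scale convergent to $u=(u_0,u_1,u_2)\in\mathbb{F}_0^1L^B$, in particular it is bounded in $W_0^1L^B(\Omega;\mathbb R^n)$; Proposition \ref{mainprop3s} then guarantees that, componentwise, $Du_\varepsilon$ is weakly reiteratively two-scale convergent in $L^B(\Omega)^{nN}$ to $\mathbb{D}u=Du_0+D_yu_1+D_zu_2$. Moreover, from the definitions \eqref{F01}--\eqref{LF10} one has $Du_0\in L^B(\Omega)^{nN}$, $D_yu_1\in L^B_{per}(\Omega\times Y)^{nN}$ and $D_zu_2\in L^B_{per}(\Omega\times Y\times Z)^{nN}$, hence $\mathbb{D}u\in L^B_{per}(\Omega\times Y\times Z)^{nN}$, which is the regularity required to apply the preceding proposition.

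Secondly, for every integer $m\geq 1$ the preceding proposition applied with $v_\varepsilon=Du_\varepsilon$ and $v=\mathbb{D}u$ yields
\begin{equation*}
\iiint_{\Omega\times Y\times Z}f_m(y,z,\mathbb{D}u(x,y,z))\,dxdydz-\frac{C'}{m}\leq\underset{\varepsilon\to 0}{\liminf}\int_{\Omega}f\left(\frac{x}{\varepsilon},\frac{x}{\varepsilon^2},Du_\varepsilon(x)\right)dx.
\end{equation*}
Finally, I would let $m\to\infty$: by the convergence $f_m(\cdot,\cdot,v)\to f(\cdot,\cdot,v)$ in $L^1(\Omega;L^1_{per}(Y\times Z))$ for every $v\in L^B_{per}(\Omega\times Y\times Z)^{nN}$ (recorded right after the definition \eqref{fm} of $f_m$), applied with $v=\mathbb{D}u$, the left-hand side of the above inequality converges to $\iiint f(y,z,\mathbb{D}u)\,dxdydz$, which proves the claim.

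The step requiring the most care is ensuring that the constant $C'$ arising in the previous proposition is independent of $m$, so that $C'/m\to 0$. Inspecting that proof, $C'$ depends only on $|\Omega|$, on the threshold $t_0$ coming from the $\triangle_2$ condition on $B$, on $\widetilde{B}(b(t_0))$, $B(1/\alpha)$, $K(C)$ and on $\sup_\varepsilon\int_\Omega B(4(1+|Du_\varepsilon|))\,dx$; the latter is finite and uniformly bounded in $\varepsilon$ thanks to the $\triangle_2$ condition on $B$ together with the boundedness of $(Du_\varepsilon)$ in $L^B(\Omega)^{nN}$ inherited from Proposition \ref{mainprop3s}. None of these quantities involves the regularization parameter $m$, so the error term does vanish in the limit and no obstacle remains.
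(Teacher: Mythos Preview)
Your proposal is correct and follows exactly the route the paper indicates in the sentence preceding the corollary: apply the previous proposition with $v_\varepsilon=Du_\varepsilon$, $v=\mathbb{D}u$, and let $m\to\infty$ using the $L^1$-convergence $f_m(\cdot,\cdot,\mathbb{D}u)\to f(\cdot,\cdot,\mathbb{D}u)$. Your verification that $C'$ is independent of $m$ and that $\mathbb{D}u\in L^B_{per}(\Omega\times Y\times Z)^{nN}$ are details the paper leaves implicit, but your treatment of them is accurate.
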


Now we are in position to put together all the previous results in order to prove our main result.

\begin{proof}[Proof of Theorem \ref{main}]
	
For every $\varepsilon$, let $u_{\varepsilon }$ be a minimizer of $F_{\varepsilon }$. Hypothesis $(H_4)$ guarantees that
\ $(u_{\varepsilon })_\varepsilon$ is bounded in $W_{0}^{1}L^{B}\left( \Omega ;
\mathbb{R}
\right) ^{n}$. On the other hand, since the real sequence $\left( F_{\varepsilon }\left(
u_{\varepsilon }\right) \right) _{\varepsilon >0}$ is bounded, we can extract a not relabelled subsequence, such that we have $\left( a\right) -\left( b\right) 
,$ in the statement, and $\underset{\varepsilon \rightarrow 0}{\lim }F_{\varepsilon }\left(
u_{\varepsilon }\right) $ hold. 

It remains to verify that 
 $u=\left( u_{0},u_{1},u_{2}\right) $ is the solution of the
minimization problem $\left( \ref{mp}\right) .$ Let $\phi =\left( \psi
_{0},\psi _{1},\psi _{2}\right) \in F_{0}^{\infty }$ with $\psi _{0}\in 
\mathcal{D}\left( \Omega \right) ^{n},\psi _{1}\in \left[ \mathcal{D}\left(
\Omega \right) \otimes \mathcal{C}_{per}^{\infty }\left( Y\right) /%
\mathbb{R}
\right] ^{n}$, $\psi _{2}\in \left[ \mathcal{C}_{0}^{\infty }\left( \Omega
\right) \otimes \mathcal{C}_{per}^{\infty }\left( Y\right) \otimes \mathcal{C%
}_{per}^{\infty }\left( Z\right) /
\mathbb{R}
\right] ^{n}$. Define $\phi _{\varepsilon }:=\psi
_{0}+\varepsilon \psi _{1}+\varepsilon ^{2}\psi _{2}.$ Then $\phi
_{\varepsilon }\in W_{0}^{1}L^{B}\left( \Omega ;%
\mathbb{R}
\right) ^{n}$ so that we have 
\begin{equation*}
\int_{\Omega }f\left( \frac{x}{\varepsilon },%
\frac{x}{\varepsilon ^{2}},Du_{\varepsilon }\left( x\right) \right) dx\leq
\int_{\Omega }f\left( \frac{x}{\varepsilon },\frac{x}{\varepsilon ^{2}}%
,D\phi _{\varepsilon }\left( x\right) \right) dx.
\end{equation*} 
Therefore, taking the
limit as $\varepsilon \rightarrow 0$, using \ the arbitrariness of $\phi $,
the density of $F_{0}^{\infty }$ in $\mathbb{F}_{0}^{1}L^{B}$ the above
inequality leads us to 
\begin{equation*}
\underset{\varepsilon \rightarrow 0}{\lim }%
\int_{\Omega }f\left( \frac{x}{\varepsilon },\frac{x}{\varepsilon ^{2}}%
,Du_{\varepsilon }\left( x\right) \right) dx\leq \underset{v\in \mathbb{F}%
_{0}^{1}L^{B}}{\inf }\iiint_{\Omega \times Y\times Z}f\left( y,z,\mathbb{D}%
v\left( x,y,z\right) \right) dxdydz.
\end{equation*} 
This inequality, together with Corollary \ref{maincor}, leads
to the equality
$$
\iiint_{\Omega \times Y\times Z}f\left( y,z,\mathbb{D}%
u\left( x,y,z\right) \right) dxdydz=\underset{v\in \mathbb{F}%
	_{0}^{1}L^{B}}{\inf }\iiint_{\Omega \times Y\times Z}f\left( y,z,\mathbb{D}%
v\left( x,y,z\right) \right) dxdydz.
$$ 
Since (\ref{MP1}) has a unique solution, we can conclude that
the whole sequence $\left( u_{\varepsilon }\right) _{\varepsilon}$
verifies $\left( a\right) -\left( b\right) $\ \ and the proof is completed.
\end{proof}

The following corollary recasts the above results in terms of $\Gamma$-convergence with respect to reiterated two-scale convergence, thus extending the result proven in the single scale case in \cite{FTNZIMSE}, (see \cite{DalMaso} for details about $\Gamma$-convergence). 
\begin{corollary}
	\label{maincor2}
	Let $\Omega$ and $f$ be as in Theorem \ref{main}. Then, for every $u=\left( u_{0},u_{1},u_{2}\right) \in \mathbb{F}_{0}^{1}L^{B}$, it results
	\begin{align}
	\inf\left\{\liminf_{\varepsilon \to 0}\int_\Omega f\left(\frac{x}{\varepsilon},\frac{x}{\varepsilon^2},  Du_\varepsilon\right)dx:u_\varepsilon \rightharpoonup u \hbox{ weakly reiteratively two-scale }\right\}=\nonumber\\
		\inf\left\{\limsup_{\varepsilon \to 0}\int_\Omega f\left(\frac{x}{\varepsilon},\frac{x}{\varepsilon^2},  Du_\varepsilon\right)dx:u_\varepsilon \rightharpoonup u \hbox{ weakly  reiteratively two-scale } \right\}=	\label{mainalign} \\
		\iiint_{\Omega \times Y \times Z}f(y,x, \mathbb D u(x,y,z))dxdydz, \nonumber
	\end{align}
	where  $\mathbb{D}u=Du_{0}+D_{y}u_{1}+D_{z}u_{2}$.
\end{corollary}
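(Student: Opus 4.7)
The plan is to verify the three displayed quantities in \eqref{mainalign} coincide by establishing the standard $\Gamma$-convergence sandwich: the middle quantity trivially dominates the first (since $\liminf \leq \limsup$), so it suffices to prove that the first is bounded below by $F(u)$ and that the third is bounded below by the second.

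For the lower bound, I would fix any admissible sequence $u_\varepsilon \rightharpoonup u$ weakly reiteratively two-scale and apply Corollary \ref{maincor} directly to obtain
\begin{equation*}
\iiint_{\Omega \times Y \times Z}f(y,z,\mathbb{D}u)\, dx\,dy\,dz \leq \liminf_{\varepsilon \to 0}\int_\Omega f\left(\tfrac{x}{\varepsilon},\tfrac{x}{\varepsilon^2},Du_\varepsilon\right)dx,
\end{equation*}
so, taking the infimum over such sequences, the first quantity is at least $F(u)$. Note that the class of admissible sequences is nonempty, because $u \in \mathbb{F}_0^1 L^B$ is by hypothesis the reiterated two-scale limit of some sequence in $W_0^1 L^B(\Omega)$ (otherwise the inf is $+\infty$ and there is nothing to prove on that side).

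For the upper bound, I would construct a recovery sequence. First, for $\phi = (\psi_0, \psi_1, \psi_2) \in F_0^\infty$ (defined in \eqref{Finfty0}), set
\begin{equation*}
\phi_\varepsilon(x):=\psi_0(x)+\varepsilon \psi_1\!\left(x,\tfrac{x}{\varepsilon}\right)+\varepsilon^2\psi_2\!\left(x,\tfrac{x}{\varepsilon},\tfrac{x}{\varepsilon^2}\right).
\end{equation*}
A direct computation of $D\phi_\varepsilon$ together with Corollary \ref{corollary2.1} shows that $\phi_\varepsilon \to \psi_0$ in $L^B(\Omega)$ and $D\phi_\varepsilon \rightharpoonup D\psi_0 + D_y\psi_1 + D_z\psi_2 = \mathbb{D}\phi$ weakly reiteratively two-scale in $L^B(\Omega)^n$, so in particular $\phi_\varepsilon \rightharpoonup \phi$ in the sense of the statement. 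Corollary \ref{corollary3.1} then yields
\begin{equation*}
\lim_{\varepsilon\to 0}\int_\Omega f\left(\tfrac{x}{\varepsilon},\tfrac{x}{\varepsilon^2},D\phi_\varepsilon\right)dx = F(\phi),
\end{equation*}
which is the desired recovery for $\phi \in F_0^\infty$.

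For a general $u \in \mathbb{F}_0^1 L^B$, I would invoke the density of $F_0^\infty$ in $\mathbb{F}_0^1 L^B$ (already recorded in the discussion leading to Lemma \ref{mp}) to pick $\phi^{(k)} \to u$ in the $\mathbb{F}_0^1 L^B$ norm. The continuity of the functional $v \mapsto \iiint f(y,z,\mathbb{D}v)\,dx\,dy\,dz$ on $\mathbb{F}_0^1 L^B$ follows from the extension bound \eqref{estimate5.1} in Proposition \ref{proposition3.1} applied to $v=\mathbb{D}\phi^{(k)}$ and $w=\mathbb{D}u$, giving $F(\phi^{(k)}) \to F(u)$. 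A standard Attouch-type diagonalization (extract $\varepsilon(k) \downarrow 0$ fast enough that $u_\varepsilon := \phi^{(k(\varepsilon))}_\varepsilon$ both reiteratively two-scale converges to $u$ and satisfies $\limsup_\varepsilon F_\varepsilon(u_\varepsilon) \leq \lim_k F(\phi^{(k)}) = F(u)$) produces the required recovery sequence, proving the inequality $(2) \leq (3)$.

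The main obstacle is the diagonalization step, since the notion of reiterated two-scale convergence is defined through test functions rather than a metric topology; the argument must ensure that the rate $\varepsilon(k)$ is slow enough to preserve the convergence $u_\varepsilon \rightharpoonup u$ against every test $\varphi \in L^{\widetilde B}(\Omega;\mathcal{C}_{per}(Y\times Z))$ while still being fast enough to drive the energy below $F(u) + 1/k$. This is handled by choosing $\varepsilon(k)$ to depend on a countable dense family of test functions (separability of $L^{\widetilde B}(\Omega;\mathcal{C}_{per}(Y\times Z))$, already used in Proposition \ref{propcomp}), combined with the uniform $L^B$-bound on $D\phi^{(k)}_\varepsilon$ coming from Lemma \ref{lemma2.2}, which controls the gradients via the $\mathbb{F}_0^1 L^B$ norm of $\phi^{(k)}$ independently of $\varepsilon$.
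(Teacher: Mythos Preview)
Your proposal is correct and follows the same overall scheme as the paper: Corollary \ref{maincor} for the liminf inequality, the explicit corrector ansatz $\phi_\varepsilon$ together with Corollary \ref{corollary3.1} for smooth recovery sequences, density of $F_0^\infty$ in $\mathbb{F}_0^1L^B$, continuity of $F$ via \eqref{estimate5.1}, and a diagonal argument. The only methodological difference lies in how the diagonalization is justified. You treat weak reiterated two-scale convergence as a non-metric notion and diagonalize against a countable dense family of test functions, using the uniform $L^B$-bound on the gradients. The paper instead notes that for each fixed approximation level the gradients $D\phi_\varepsilon$ converge \emph{strongly} reiteratively two-scale (Definition \ref{def3s}), and packages the diagonalization into a single real-valued quantity $c_{\delta,\varepsilon}$ combining the $L^B(\Omega)$-distance of $u_{\delta,\varepsilon}$ to $u_0$ with the difference of gradient norms; this reduces everything to a routine double limit of scalars. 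Your route is slightly more laborious but self-contained, while the paper's route exploits the extra regularity of the smooth corrector construction to upgrade to strong two-scale convergence and thereby sidestep the separability argument.
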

\begin{proof}[Proof]
The statement will be proven if we show that 
\begin{align*} 
\iiint_{\Omega \times Y \times Z}f(y,x, \mathbb D u(x,y,z))dxdydz\leq
\liminf_{\varepsilon \to 0}\int_\Omega f\left(\frac{x}{\varepsilon},\frac{x}{\varepsilon^2},D u_\varepsilon\right)dx,
\end{align*}
for any sequence $u_\varepsilon \rightharpoonup u\in \mathbb F^1_0L^B$ reiteratively two-scale, and we exhibit a sequence $\overline u_\varepsilon$ such that $\overline u_\varepsilon \rightharpoonup u\in \mathbb F^1_0L^B$ reiteratively two-scale, and
\begin{align*}
\limsup_{\varepsilon \to 0}\int_\Omega f\left(\frac{x}{\varepsilon},\frac{x}{\varepsilon^2},D \overline u_\varepsilon\right)dx\leq\iiint_{\Omega \times Y \times Z}f(y,x, \mathbb D u(x,y,z))dxdydz.
\end{align*}
The first inequality is consequence of Corollary \ref{maincor}. For what concerns the upper bound we preliminarily observe that a standard argument in the Orlicz setting allows us to consider, for any given ${\rm N}-$function $B$, a generating function $b$ such that $b$ is continuous and $B$ verifies the $\triangle_2$ condition near $0$.

\bigskip 
\noindent Now let $\phi_{\varepsilon }\left( x\right) :=\psi
_{0}+\varepsilon \psi _{1}\left( x,\frac{x}{\varepsilon }\right)
+\varepsilon ^{2}\psi _{1}\left( x,\frac{x}{\varepsilon },\frac{x}{%
	\varepsilon ^{2}}\right) $ for $x\in \Omega ,$ where
$\psi _{0}\in \mathcal{C}_{0}^{\infty }(\Omega),\psi
_{1}\in \left[ \mathcal{C}_{0}^{\infty }(\Omega) \otimes 
\mathcal{C}_{per}^{\infty }(Y) \right]$ and $\psi _{2}\in %
\left[ \mathcal{C}_{0}^{\infty }(\Omega) \otimes \mathcal{C}%
_{per}^{\infty }(Y) \otimes \mathcal{C}_{per}^{\infty }(Z) \right]$, then, 
\begin{align*}
\underset{\varepsilon \rightarrow 0}{\lim }\int_{\Omega }f\left( \frac{x}{%
	\varepsilon },\frac{x}{\varepsilon ^{2}},D\phi _{\varepsilon }\right) dx=
\\ 
\iiint_{\Omega \times Y\times Z}f\left( y,z,D\psi _{0}+D_{y}\psi
_{1}+D_{z}\psi _{2}\right) dxdydz.
\end{align*}

\noindent Let $\mathbb{F}^{1}L^{B}:=W^{1}L^{B}(\Omega) \times L_{D_{y}}^{B}\left( \Omega ;W_{\#}^{1}L^{B}(Y) \right) \times L_{D_{z}}^{B}\left( \Omega ;L_{per}^1\left(
Y;W_{\#}^{1}L^{B}(Z) \right) \right) $ where
$L_{D_{y}}^{B}\left( \Omega ;W_{\#}^{1}L^{B}(Y) \right)$, 
$L_{D_{z}}^{B}\left( \Omega ;L_{per}^1\left( Y;W_{\#}^{1}L^{B}(Z) \right) \right)$  
have been defined in \eqref{LF10}. Recalling also that $\mathbb{F}^{1}L^{B}$, equipped with the norm $\left\Vert u_{0}\right\Vert _{%
	\mathbb{F}^{1}L^{B}}=\left\Vert Du\right\Vert _{B,\Omega }+\left\Vert
D_{y}u_{1}\right\Vert _{B,\Omega \times Y}+\left\Vert D_{z}u_{2}\right\Vert
_{B,\Omega \times Y\times Z}$, $u_{0}=\left( u,u_{1},u_{2}\right) \in 
\mathbb{F}_{0}^{1}L^{B}$ is Banach space,
thanks to the density of $\mathcal{C}^{\infty}( \overline{\Omega }) $ in $W^{1}L^{B}(\Omega) ,$ of $\mathcal{C}_{per}^{\infty }(Y) /
\mathbb R
$ in $W_{\#}^{1}L_{per}^{B}(Y) $ and that of $\mathcal{C}_{per}^{\infty }(Y) \otimes \mathcal{C}_{per}^{\infty }(Z)/ 
\mathbb R
$ in $L_{per}^1\left( Y;W_{\#}^{1}L^{B}(Z) \right)$, the space $F^{\infty }:=\mathcal{C}^{\infty }( 
\overline{\Omega}) \times $ $\left[ \mathcal{D}(\Omega) \otimes \mathcal{C}_{per}^{\infty }(Y) /
\mathbb R
\right] \times $ $ \left[ \mathcal{D}(\Omega) \otimes \mathcal{C}_{per}^{\infty }(Y) \otimes \mathcal{C}_{per}^{\infty }(Z) /
\mathbb R
\right] $ \ is dense in $\mathbb{F}^{1}L^{B}.$ 

As above for $v_0=\left(
v,v_{1},v_{2}\right) \in \mathbb{F}^{1}L^{B}$ we denote by $\mathbb{D}%
v_0$ the sum $Dv+D_{y}v_{1}+D_{z}v_{2}.$ 
 
In view of the stated density, 
given $\delta >0,$ there exist $u_{\delta }\in \mathcal{C}^{\infty }( 
\overline{\Omega }) ,v_{\delta }\in \left[ \mathcal{D}(\Omega) \otimes \mathcal{C}_{per}^{\infty }(Y) /
\mathbb R
\right] ,w_{\delta }\in \left[ \mathcal{D}(\Omega) \otimes \mathcal{C}_{per}^{\infty }(Y) \otimes \mathcal{C}_{per}^{\infty }(Z)/
\mathbb{R}
\right] $ such that: 
\begin{equation*}
\left\Vert v-u_{\delta }\right\Vert _{W^{1}L^{B}(\Omega) }+\left\Vert v_{1}-v_{\delta }\right\Vert _{L^{1}\left(\Omega
	;W_{\#}^{1}L^{B}(Y) \right) }+\left\Vert v_{2}-w_{\delta }\right\Vert _{L^{1}\left(
	\Omega ;L_{per}^{B}\left( Y;W_{\#}^{1}L^{B}(Z) \right) \right) }<\delta .
\end{equation*}%
For every $\delta, \varepsilon >0 $ and for every $x\in \Omega ,$ define
$u_{\delta ,\varepsilon }\left( x\right) =:u_{\delta }\left( x\right)
+\varepsilon v_{\delta }\left( x,\frac{x}{\varepsilon }\right) +\varepsilon
^{2}w_{\delta }\left( x,\frac{x}{\varepsilon },\frac{x}{\varepsilon ^{2}}%
\right)$. It results that 
\begin{eqnarray*}
	D_{x}u_{\delta ,\varepsilon }\left( x\right)  &=&D_{x}u_{\delta }\left(
	x\right) +\varepsilon D_{x}v_{\delta }\left( x,\frac{x}{\varepsilon }\right)
	+\varepsilon ^{2}D_{x}w_{\delta }\left( x,\frac{x}{\varepsilon },\frac{x}{%
		\varepsilon ^{2}}\right) +D_{y}v_{\delta }\left( x,\frac{x}{\varepsilon }%
	\right) +\\
&&\varepsilon D_{y}w_{\delta }\left( x,\frac{x}{\varepsilon },\frac{x%
	}{\varepsilon ^{2}}\right) 
	+D_{z}w_{\delta }\left( x,\frac{x}{\varepsilon },\frac{x}{\varepsilon ^{2}}%
	\right). 
\end{eqnarray*}%
As immediate consequence, for $\delta $ fixed, 
\begin{equation*}
\begin{array}{ll}
u_{\delta ,\varepsilon
}\rightarrow u_{\delta } \hbox{ in } L^{B}\left( \Omega \right),
\\D_{x}u_{\delta
	,\varepsilon }{\rightarrow }D_{x}u_{\delta }+D_{y}v_{\delta
}+D_{z}w_{\delta } \hbox{ strongly reiteratively two-scale in } L_{per}^{B}\left( \Omega \times Y\times
Z\right),
\end{array}
\end{equation*}
as $\varepsilon \to 0$.

Next, setting 
\begin{equation*}
 c_{\delta ,\varepsilon }=:\left\Vert
u_{\delta ,\varepsilon }-v\right\Vert _{W^{1}L^{B}(\Omega)}+\left|
\left\Vert Du_{\delta
	,\varepsilon }\right\Vert_{L^B(\Omega)}-\left\Vert Dv+ D_{y}v_{1}+D_{y}v_{2}\right\Vert_{L^B(\Omega \times Y\times Z)} \right|,
\end{equation*}
using the above density results:\begin{equation*}
\underset{\delta \rightarrow 0}{\lim }\underset{\varepsilon
	\rightarrow 0}{\lim }c_{\delta ,\varepsilon }=0.
\end{equation*}%
Then, via diagonalization,  we can construct a sequence $\delta \left(
	\varepsilon \right) \rightarrow 0,$ as $\varepsilon \rightarrow 0$ and such
that: 
\begin{itemize}
	\item[(i)] $\underset{\delta \left( \varepsilon \right) \rightarrow 0}{\lim }%
c_{\delta(\varepsilon),\varepsilon }=0.$
\item[(ii)] $u_{\delta(\varepsilon),\varepsilon }\rightarrow v$ in $%
L^{B}\left( \Omega \right)$, 
\item[(iii)] $Du_{\delta(\varepsilon),\varepsilon}
	\rightharpoonup D_{x}v+D_{y}v_{1}+D_{z}v_{2}$ strongly reiteratively in $L_{per}^{B}\left(
\Omega \times Y\times Z\right) .$
\end{itemize} 
In particular, it follows that \ $Du_{\delta(\varepsilon)
	,\varepsilon }\rightharpoonup D_{x}v$ weakly in $L^{B}\left( \Omega \right) ,
$ and 
\begin{align*}
\underset{\varepsilon \rightarrow 0}{\lim }\int_{\Omega }f\left( \frac{x}{%
	\varepsilon },\frac{x}{\varepsilon ^{2}},Du_{\delta(\varepsilon) ,\varepsilon
}(x)\right) dx=\\ 
\iiint_{\Omega \times Y\times Z}f\left(
y,z,D_{x}v+D_{y}v_{1}+D_{z}v_{2}\right) dxdydz.
\end{align*}
Since the above construction can be performed for every triple $(v,v_1,v_2)\in \mathbb F^1L^B$, it is enough to repeat the construction for $u_0=(u,u_1,u_2)\in \mathbb F^1_0L^B$ as claimed.
\end{proof}



\begin{remark}
	It is worth to observe that the result in Corollary \ref{maincor2} holds, with the exact same proof under weaker assumptions than those in Theorem \ref{main}: namely $(H_2)$ can be replaced by convexity, and in $(H_4)$ it is not crucial to have $f$ non-negative, it is enough to have a bound from below.
	Moreover the same proof can be performed if $u_\varepsilon$ and $u$ are vector valued and not just scalar valued functions.	\end{remark}

\section{Appendix}
Here we present the proof Proposition \ref{prop2.1} which establishes the equivalence between the norms $\left\Vert \cdot\right\Vert _{B,Y \times Z }$ and $ \left\Vert
\cdot \right\Vert _{\Xi ^{B}\left(
	\mathbb R_y^N;\mathcal C_b\left( 
	\mathbb R_z^N\right) \right) }$  in $\mathfrak{X}_{per}^{B}\left( 
\mathbb R_y^N;\mathcal C_b\right) .$

\begin{proof}[Proof of Proposition \ref{prop2.1}]
	The inclusion is a direct consequence of \ the definition, and clearly every element in $L^B_{per}(Y\times Z)$, can be obtained as limit in $\|\cdot\|_{B, Y \times Z}$ norm of sequences in $\mathcal C_{per}(Y\times Z)$.
		
	On the other hand, by the very defintion of $\mathfrak{X}_{per}^{B}(
	\mathbb R_y^N;\mathcal C_b) $,  $v\in 
	\mathfrak{X}_{per}^{B}(
	\mathbb R_y^N;\mathcal C_b) $ if and only if there exist $(
	v_{n})_{n\in 
		\mathbb N
	}\in \mathcal{C}_{per}\left( Y\times Z\right) $ such that $\left(
	v_{n}\right) _{n\in 
		\mathbb{N}
	}$ converge to $v$ for the norm $\left\Vert \cdot\right\Vert _{\Xi ^{B}\left( 
		\mathbb{R}
		_{y}^{N};{\mathcal C}_b\left(
		\mathbb{R}
		_{z}^{N}\right) \right) }.$
	
	Thus for every  $w\in \mathfrak{X}_{per}^{B}\left( 
	\mathbb{R}
	_{y}^{N};\mathcal C_b\right) $
	there exist $\left( w_{n}\right) _{n\in 
		\mathbb{N}
	}\subset \mathcal{C}_{per}\left( Y\times Z\right) ,$such that as $%
	n\rightarrow \infty ,w_{n}\rightarrow w$ in $\Xi ^{B}\left( 
	\mathbb{R}
	_{y}^{N};\mathcal C_b\left( 
	\mathbb{R}
	_{z}^{N}\right) \right) .$ 
	
	\noindent	We claim that for every $u \in C_{per}(Y\times Z)$, it results $\left\Vert u\right\Vert _{B,Y \times Z
	}\leq \left\Vert u\right\Vert _{\Xi ^{B}\left( 
		\mathbb{R}
		_{y}^{N};\mathcal C_b \right) }$. From the claim it follows that
	
	$\left\Vert w_{n}-w_{m}\right\Vert _{B,Y \times Z }\leq \left\Vert
	w_{n}-w_{m}\right\Vert _{\Xi ^{B}\left( 
		\mathbb{R}
		_{y}^{N};\mathcal C_b\left( 
		\mathbb R
		_{z}^{N}\right) \right) },$ for all $m,n\in 
	\mathbb N.$ Therefore $\left( w_{n}\right) _{n\in 
		\mathbb N }$ is a Cauchy sequence in $\mathfrak{X}_{per}^{B}\left( 
	\mathbb R
	_{y}^{N}\times 
	\mathbb R_{z}^{N}\right) $ and in $\mathfrak{X}_{per}^{B}\left( 
	\mathbb R_{y}^{N};\mathcal C_b\right) .$ Hence there exist $w^{1}\in \mathfrak{X}%
	_{per}^{B}\left(
	\mathbb R_{y}^{N}\times 
	\mathbb R_{z}^{N}\right) ,w^{2}\in \mathfrak{X}_{per}^{B}\left( 
	\mathbb R
	_{y}^{N};\mathcal C_b\right) $ such that
	\begin{equation*}
	\underset{n\rightarrow \infty }{\lim }\left\Vert w_{n}-w^{1}\right\Vert
	_{B,Y \times Z }=\underset{n\rightarrow \infty }{\lim }\left\Vert
	w_{n}-w^{2}\right\Vert _{\Xi ^{B}\left( 
		\mathbb R
		_{y}^{N};{\mathcal C}_b\left( 
		\mathbb R_z^N\right) \right) }=0.
	\end{equation*}
	Moreover the passage to the limit guarantees that
	$\left\Vert w^{1}\right\Vert _{B,Y \times Z }\leq \left\Vert w^{2}\right\Vert
	_{\Xi ^{B}\left( 
		\mathbb R
		_{y}^{N};\mathcal C_b\left( 
		\mathbb R
		_{z}^{N}\right) \right) }$.  It is also clear, considering the convergence in the sense of distributions, that  $w^{1}=w^{2}$.
	
	It remains to prove the claim. To this end,
	let $u,v\in \mathcal{C}_{per}\left( Y\times Z\right) ;$ we have%
	\begin{align*}
	\left\vert \int_{B_{N}\left( 0,1\right) }u\left( \frac{x}{\varepsilon },%
	\frac{x}{\varepsilon ^{2}}\right) v\left( \frac{x}{\varepsilon },\frac{x}{%
		\varepsilon ^{2}}\right) dx\right\vert \leq \int_{B_{N}\left( 0,1\right)
	}\left\Vert u\left( \frac{x}{\varepsilon },\cdot \right) \right\Vert _{\infty
	}\left|v\left( \frac{x}{\varepsilon },\frac{x}{\varepsilon ^{2}}\right)\right| dx\leq 
	\\ 
	2\left\Vert v^{\varepsilon }\right\Vert _{\widetilde{B},B_{N}\left(
		0,1\right) }\left\Vert u\right\Vert _{\Xi ^{B}\left( 
		\mathbb R
		_{y}^{N};{\mathcal C}_b( 
		\mathbb R
		_{z}^{N}) \right) }.
	\end{align*} 
	Passing to limit, as $\varepsilon \to 0$, we obtain: 
	\begin{align*} 
	\left\vert \int_{Y\times Z}u\left( y,z\right) v\left( y,z\right)
	dydz\right\vert \leq 2\left\Vert v\right\Vert _{\widetilde{B},Y\times
		Z}\left\Vert u\right\Vert _{\Xi ^{B}\left( 
		\mathbb R
		_{y}^{N};{\mathcal C}_b(\mathbb R^N_z) \right) }. 
	\end{align*} 
	Using the density of $\mathcal{C}_{per}\left( Y\times Z\right) $ in $L_{per}^{%
		\widetilde{B}}\left( Y\times Z\right) $ we obtain (with the topology of the
	norm) 
	\begin{align*}
	\left\vert \int_{Y\times Z}u\left( y,z\right) v\left( y,z\right)
	dydz\right\vert \leq 2\left\Vert v\right\Vert _{\widetilde{B},Y\times
		Z}\left\Vert u\right\Vert _{\Xi ^{B}\left( 
		\mathbb{R}
		_{y}^{N};{\mathcal C}_b\left( 
		\mathbb{R}
		_{z}^{N}\right) \right) }, 
	\end{align*} 
	for all $v\in L_{per}^{\widetilde{B}}\left(
	Y\times Z\right)$. 
	Thus $\left\Vert u\right\Vert _{B,Y\times Z}\leq 2\left\Vert u\right\Vert
	_{\Xi ^{B}\left( 
		\mathbb{R}
		_{y}^{N};{\mathcal C}_b\left( 
		\mathbb{R}
		_{z}^{N}\right) \right) },$ for all $u\in \mathcal{C}_{per}\left( Y\times
	Z\right) $,
	and we get the result for all $u\in \mathfrak{X}^B_{per}\left(
	\mathbb R
	_{y}^{N};{\mathcal C}_b\right) $, via standard density arguments.
\end{proof}
\color{black}
\section{Acknowledgements}

This paper has been written during the visit of J.F.T. at Dipartimento di Ingegneria Industriale  (INdAM unit) at University of
Salerno. The authors gratefully acknowledge the supports of the INdAM-ICTP Research in pairs programme. E. Z. is a member of INdAM-GNAMPA.

\bigskip

\smallskip

\end{document}